\newtheorem{alg}{Algorithm}
\renewenvironment{algorithm}[1]
{\vspace{10pt}
\begin{alg}\rm
{#1}
\vspace{5pt}
\hrule
\vspace{5pt}}
{\vspace{5pt}
\hrule
\end{alg}
\vspace{10pt}}
\algnewcommand\algorithmicinitialsetup{\textbf{Initial Setup:}}
\algnewcommand\InitialSetup{\item[\algorithmicinitialsetup]}
\algnewcommand\algorithmicpublickey{\textbf{Public Key:}}
\algnewcommand\PublicKey{\item[\algorithmicpublickey]}
\algnewcommand\algorithmicprivatekey{\textbf{Private Key:}}
\algnewcommand\PrivateKey{\item[\algorithmicprivatekey]}
\algnewcommand\algorithmicencoding{\textbf{Encryption:}}
\algnewcommand\Encoding{\item[\algorithmicencoding]}
\algnewcommand\algorithmicdecoding{\textbf{Decryption:}}
\algnewcommand\Decoding{\item[\algorithmicdecoding]}
\newtheorem{thm}{Theorem}[section]
\newtheorem{theorem}[thm]{Theorem}
\newtheorem{proposition}[thm]{Proposition}
\newtheorem{lemma}[thm]{Lemma}
\newtheorem{corollary}[thm]{Corollary}
\theoremstyle{remark}
\newtheorem{remark}[thm]{Remark}
\newtheorem{claim}[thm]{Claim}
\newenvironment{thma}[1][Theorem]{\begin{trivlist}\item[\hskip \labelsep {\bfseries #1}]}{\end{trivlist}}
\newcommand{\CA}{{\mathcal A}}
\newcommand{\CC}{{\mathcal C}}
\newcommand{\CF}{{\mathcal F}}
\newcommand{\CM}{{\mathcal M}}
\newcommand{\CL}{{\mathcal L}}
\newcommand{\CP}{{\mathcal P}}
\newcommand{\CS}{{\mathcal S}}
\newcommand{\CO}{{\mathcal O}}
\newcommand{\ovw}{{\overline{w}}}
\newcommand{\tdw}{{\widehat{w}}}
\DeclareMathOperator{\supp}{supp}
\DeclareMathOperator{\ncl}{ncl}
\DeclareMathOperator{\REW}{RandomEqualWord}
\def\MN{{\mathbb{N}}}
\def\MR{{\mathbb{R}}}
\def\tO{{\tilde O}}
\newcommand{\rb}[1]{{\left( #1 \right)}}
\DeclareMathOperator{\WSP}{WSP}
\DeclareMathOperator{\ESP}{ESP}
\DeclareMathOperator{\CSP}{CSP}
\DeclareMathOperator{\MSP}{MSP}
\newcommand{\PS}{\Omega}
\newcommand{\Prob}[1]{\Pr \left\{#1\right\}}
\newcommand{\CProb}[2]{\Pr \left\{#1 \mid #2\right\}}
\newcommand{\ME}[1]{{\mathbb{E}} \left( #1 \right)}
\newcommand{\Indicator}[1]{\mathbbm{1} \left(\, #1 \,\right)}
\newcommand{\URandom}[1]{\textbf{U}(#1)}
\newcommand{\DRandom}[1]{\mathcal{P}(#1)}
\newcommand{\LDRandom}[1]{\mathcal{LP}(#1)}
\DeclareMathOperator{\ExpDSign}{EXP}
\newcommand{\ExpD}[1]{\ExpDSign(#1)}
\newcommand{\gp}[1]{{\left\langle #1 \right\rangle}}
\newcommand{\gpr}[2]{{\left( #1 ; #2 \right)}}
\newcommand{\group}[2]{{\left\langle #1 \mid #2 \right\rangle}}
\newcommand{\Reduced}[1]{\overline{#1}}
\newcommand{\CReduced}[1]{\widehat{#1}}
\newcommand{\FG}{F}
\newcommand{\FreeGroup}[1]{\FG(#1)}
\newcommand{\FMnorm}[1]{|#1|}
\newcommand{\Fnorm}[1]{|#1|}
\newcommand{\conj}[1]{\sim_{#1}}
\newcommand{\Set}[2]{\left\{\, #1 \;\middle|\; #2 \,\right\}}
\newcommand{\EP}{\mathbb{R}^2}
\newcommand{\ElementaryIdentities}{\mathcal{I}}
\newcommand{\DiagElementaryIdentities}{\mathcal{I}_D}
\newcommand{\treeHeight}[1]{h(#1)}
\newcommand{\weight}[2][]{\gamma^{#1(#2)}}
\newcommand{\vweight}[2][]{\overline{\weight[#1]{#2}}}
\newcommand{\Tweight}[2][]{\Gamma^{#1(#2)}}
\newcommand{\lp}[2][]{p^{#1(#2)}}
\newcommand{\AvgM}{\mathbb{E} \mathcal{M}}
\newcommand{\tuple}[1]{\left(#1\right)}
\begin{document}
\title{Search problems in groups and branching processes}

\begin{abstract}
In this paper we study complexity of randomly generated instances of
Dehn search problems in finitely presented groups. We use Crump-Mode-Jagers
processes to show that most of the random instances are easy.
Our analysis shows that for any choice of a finitely presented  platform group in
Wagner-Wagner public key encryption protocol the majority of random keys
can be broken by a polynomial time algorithm.

\noindent
\textbf{Keywords.}
Word search problem, conjugacy search problem, membership search problem,
(generalized) van Kampen diagrams, annular diagrams, group-based cryptography,
Wagner-Magyarik cryptosystem, Crump-Mode-Jagers process.

\noindent
\textbf{2010 Mathematics Subject Classification.} 20F10, 03D15, 20F65.
\end{abstract}

\author{Pavel~Morar and Alexander~Ushakov}
\address{Mathematical Department\\
         Stevens Institute of Technology\\
         Hoboken, NJ 07030}
\email{pmorar,aushakov@stevens.edu}

\thanks{The work was partially supported by NSF grant DMS-0914773.}

\date{\today}
\maketitle


\section{Introduction}

Let $\CL$ be a set, typically an algebraic structure, and $\CP$ a property of objects in $\CL$.
Decision problems for $\CL$ are problems of the following nature: given
an object $\CO \in \CL$, decide whether $\CO$ has the property $\CP$, or not.
On the other hand, search problems are of the following nature: given
an object $\CO$ with the property $\CP$, find an {\em efficiently verifiable proof}
(sometimes called a ``witness") of the fact that $\CO$ has $\CP$. Typically,
the proofs must be verifiable in polynomial time by a deterministic Turing machine.
In this paper, motivated by applications in group-based cryptography~(\cite{Dehornoy_survey,MSU_book,MSU_book:2011}),
we study computational complexity of search problems of group theory:
word, conjugacy, and uniform membership problems for finitely presented
groups.

In classic complexity theory the time complexity $T(\CO)$
of an algorithmic problem  for a given input $\CO$ is the time required
by the algorithm (Solver) to find the answer for $\CO$.
The time complexity function $T$ measures the difficulty of the provided challenge $\CO$.
Note that the sets of positive instances of the word, conjugacy, and membership problems
are recursively enumerable and, hence, the problems are solvable,
i.e., there exists an algorithmic procedure which computes a required
proof for a given input in finite time.
However, in general, those sets are not recursive \cite{Novikov:1955,Boone:1958} and, hence,
this approach does not give a meaningful complexity estimate
on the running time because $T$ has no recursive upper-bound.

On the other hand, when we look at the problems from
the practical point of view, we assume that the instances of the
problem are somehow sampled by some procedure (Challenger), and the procedure
``knows'' that the sampled instance is a positive instance of the
problem, i.e., it has a proof that the instance is positive. This
spreads out the complexity  ``more evenly'' between two entities,
the one which generates a positive instance of the problem and the
one which finds a proof for that instance.
Thus, we treat a search problem here as a two-party game.
In this setting, a natural analysis of the problem is the comparison of running time of an
algorithm required for the challenger to generate an instance versus that
for the solver to find  a witness. We formally define this in Section \ref{se:mode_computations}.

\subsection{Group theory notation}

For a finite set $X$ denote by $X^{-1} = \{x^{-1} \mid x \in X \}$
the set of formal {\em inverses} of elements of $X$. The map $x
\rightarrow x^{-1} (x \in X)$ naturally extends to an involution
on the set $X^{\pm} = X \cup X^{-1}$ with
$(x^{-1})^{-1} = x$. By $(X^\pm)^\ast$ we denote the {\em free monoid}
on $X^{\pm}$ and by $F = F(X)$ the {\em free group} on $X$.
By $\varepsilon$ we denote the {\em empty word}, by '$=$' the equality relation and by $\FMnorm{w}$ the length of $w$ in the free monoid or the free group depending on the context.
For a word $w\in (X^\pm)^\ast$ by $\ovw$ and $\tdw$ we denote the free and cyclic reductions of $w$ correspondingly.
The word $w$ is \emph{reduced}
if $w = \Reduced{w}$ in $(X^\pm)^\ast$, and is \emph{cyclically reduced} if $w = \tdw$ in $(X^\pm)^\ast$.

For a subset $R \subseteq F(X)$ a pair $\gpr{X}{R}$ defines a group $F(X)/\ncl_F(R)$ denoted by $\group{X}{R}$ with the set of {\em generators} $X$ and the set of {\em relators} $R$. The pair $\gpr{X}{R}$ itself is called a \emph{group presentation} and is finite if both $X$ and $R$ are finite.
We say that a group $G$ has a presentation $\gpr{X}{R}$ if $G \simeq \group{X}{R}$, and $G$ is called \emph{finitely presented} if there is a finite group presentation for $G$.

For $w_1, w_2 \in F(X)$ we write $w_1 =_G w_2$ if they represent the same element of $G$ and $w_1 \conj{G} w_2$ if $w_1$ and $w_2$ are conjugate in $G$, that is, $w_1^g =_G w_2$ for some $g \in F(X)$, where $w_1^g = g^{-1} w_1 g$.

We say that $R \subseteq \FreeGroup{X}$ is \emph{symmetrized} if $R$ contains only cyclically reduced words
and is closed under taking inverses and cyclic permutations.
Denote by $R^{\star}$ the minimal symmetrized set containing $R$ (with all the words cyclically reduced). A presentation $\gpr{X}{R}$ is \emph{symmetrized} if $R = R^{\star}$. A finite presentation can be efficiently symmetrized and symmetrization does not change the computational properties of the fundamental problems (see~\cite{MSU_book:2011}).

\subsection{Dehn problems}
The following algorithmic questions are called the \emph{Dehn problems}. These problems are the central questions of combinatorial group theory and often are referred to as the fundamental problems for groups.

\medskip
\noindent\textbf{The word problem for $G$} is an algorithmic problem to decide if a given word $w \in \FreeGroup{X}$ represents the identity element of $G$.

\medskip
\noindent\textbf{The equivalence problem for $G$} is an algorithmic problem to decide if two given words $w_1, w_2 \in \FreeGroup{X}$ represent the same element of $G$, that is, if $w_1 =_G w_2$.

\medskip
\noindent\textbf{The conjugacy problem for $G$} is an algorithmic problem to
decide if given words $w_1, w_2  \in \FreeGroup{X}$ represent conjugate elements in $G$, that is, if $w_1 \conj{G} w_2$.

\medskip
\noindent\textbf{The (uniform) subgroup membership problem for $G$} is an algorithmic problem to
decide, given a tuple of words $h_1, \dots,h_k \in (X^\pm)^\ast$, if $h \in (X^\pm)^\ast$ represents an element of the subgroup $\gp{h_1,\ldots,h_k}$ in $G$.

\medskip
We say that a finitely presented group $G$ has a {\em decidable} word (equivalence, conjugacy, membership) problem if there exists an algorithm solving that problem.
The property of $G$ to have a decidable (or undecidable) word (equivalence, conjugacy, membership) problem is a group property, i.~e., it does not depend on a particular finite presentation of $G$. Note that the equivalence problem can be straightforwardly reduced to the word problem by changing the question from if $w_1 =_G w_2$ to if $w_1 w_2^{-1} =_G \varepsilon$. Search variations of Dehn problems are defined as the following.

\medskip
\noindent\textbf{The word search problem ($\WSP$) for $G$} is an algorithmic problem to find, given $w \in \FreeGroup{X}$ with $w =_G \varepsilon$, a witness of the fact that $w$ represents the identity in $G$.

\medskip
\noindent\textbf{The equivalence search problem ($\ESP$) for $G$} is an algorithmic problem to find, given $w_1, w_2 \in \FreeGroup{X}$ with $w_1 =_G w_2$, a witness of the fact that they are equivalent.

\medskip
\noindent\textbf{The conjugacy search problem ($\CSP$) for $G$} is an algorithmic problem to find, given conjugate in $G$ words $w_1, w_2  \in \FreeGroup{X}$, a witness of $w_1 \conj{G} w_2$.

\medskip
\noindent\textbf{The (uniform) subgroup membership problem ($\MSP$) for $G$} is an algorithmic problem to find, given a tuple of words $h_1, \dots,h_k \in \FreeGroup{X}$ and $h \in \FreeGroup{X}$ with $h \in \gp{h_1,\ldots,h_k}$ in $G$, a witness of $h \in \gp{h_1,\ldots,h_k}$ in $G$.

\medskip
We use the same witnesses as in~\cite{MU1}. For examples of witnesses see~\cite{Shpilrain:2010}.

\subsection{The Wagner-Magyarik cryptosystem and its modifications}
In the 1950's it was proven that finitely presented groups can have undecidable word problems, see \cite{Novikov:1955,Boone:1958} (see also~\cite{Britton:1963,Borisov:1969,Collins:1986,Myasnikov-Osin:2011}).

The hardness of the fundamental problems of combinatorial group theory
inspired many cryptographic constructions. We are particularly interested in the hardness of search variations of Dehn problems. One of the cryptoschemes inspired by the fundamental problems of combinatorial group theory, and the most interesting to us, is the Wagner-Magyarik public-key cryptosystem proposed in \cite{MW}, where the authors outline a conceptual construction of a cryptosystem based on the word problem, and illustrate their proposal with a specific suggestion for the choice of the system parameters. Here is an outline of the construction.
\begin{algorithm}{Wagner-Magyarik PKC}
\label{al:WM_protocol}
\begin{algorithmic}[1]
\InitialSetup
Choose a finitely presented group $G = \group{X}{R}$ with a computationally hard word problem and its quotient $G' = \group{X}{R \cup S}$ with a computationally easy word problem.
Choose words $w_0, w_1$ representing different elements in $G'$.
\PublicKey The triple $(\gpr{X}{R},w_0,w_1)$.
\PrivateKey The set $S$.
\Encoding
To encrypt $b\in\{0,1\}$ randomly rewrite a word $w_b$
(see Algorithm \ref{A:WMRandomEqualWord} below)
to obtain a random word $w$ satisfying $w=_Gw_b$.
\Decoding
Since $G'$ is a quotient of $G$ and $w_0 \ne_{G'} w_1$ it follows that
$w =_G w_b$ if and only if $w =_{G'}w_b$.
Hence, to decrypt $w$ it is sufficient to check if $w=_{G'}w_0$ or $w=_{G'}w_1$.
\end{algorithmic}
\end{algorithm}

The scheme received some critique especially for being vague and missing a lot of important details (see \cite{Birget-Magliveras-Sramka:2006}). Also it was shown to be vulnerable to reaction attacks (see \cite{Vasco-Steinwandt:2004}).
In addition, it was observed in \cite{Birget-Magliveras-Sramka:2006} that
security of this scheme depends on the hardness of the \emph{word choice problem}.

\medskip
\noindent\textbf{The word choice problem for $G$:}
Given words $w_0,w_1,w$ decide if $w_0=_G w$ or $w_1 =_G w$,
provided that exactly one equality holds.

\medskip
The word choice problem and word problem for $G$ are not equivalent.
In particular, the word problem can be undecidable while the word choice problem is always decidable. It can be attacked by solving the word search problem
for $ww_0^{-1}$ and, in parallel, for $ww_1^{-1}$.
Exactly one of those words is trivial and
only one process stops giving a witness for the corresponding choice.

A very important part of this scheme is the generation of a random word $w$.
In \cite{MW} the following algorithm was outlined.

\begin{algorithm}{Random Equal Word($\gpr{X}{R}, w, n$)}
\label{A:WMRandomEqualWord}
\begin{algorithmic}[1]
	\Require A finite presentation $\gpr{X}{R}$, a word $w \in \FreeGroup{X}$, and $n \in \MN$.
	\Ensure A word $w' \in \FreeGroup{X}$ such that $w' =_G w$.
	\State $\ElementaryIdentities = \Set{xx^{-1}, \, x^{-1}x}{x \in X} \cup R$.
	\State $w_0 = w$.
	\For{$i = 1$ to $n$}
		\State Randomly perform one of the following: either insert a random $u \in \ElementaryIdentities$ into $w_{i-1}$ at a random
position $p$, or remove some random occurrence of some $u \in \ElementaryIdentities$ in $w_{i-1}$.
		\State Call the obtained word $w_i$.
	\EndFor
	\State \Return $\Reduced{w_n}$.
\end{algorithmic}
\end{algorithm}

Note that step $4$ of this algorithm is not completely specified.
It does not say how to make required random choices.
Mathematical foundations of the protocol were never analyzed mostly because of the vagueness of the scheme.
In this paper we do a very general mathematical analysis assuming that random positions are chosen uniformly.
Our analysis does not depend on a choice of the public and private information, namely
on the choice of $X,R,S,w_0,w_1$.
Instead we investigate characteristics of the words $w_0$, \dots, $w_n$ generated in the protocol.

Even though the Wagner-Magyarik scheme is considered to be insecure
it is still being discussed and different variations are being proposed.
For instance, in \cite{Birget-Magliveras-Sramka:2006}
the authors consider ways to make the Wagner-Magyarik scheme viable
by (considerably) changing the design and the platform group.
In \cite{Abisha-Thomas-Subramanian:2003,Vehel-Perret:2006,Vehel-Perret:2010}
the authors study Wagner-Magyarik-like schemes based on the
word choice problem in semigroups.

The original Wagner-Magyarik cryptosystem can be modified in many ways.
In particular, one can employ the hardness
of the conjugacy problem as described below.

\begin{algorithm}{WM-PKC based on the conjugacy problem}
\label{al:CP_protocol}
\begin{algorithmic}[1]
\InitialSetup
Choose a finitely presented group $G=\group{X}{R}$ with a computationally hard conjugacy problem
and its quotient $G' = \group{X}{R \cup S}$ with an easy conjugacy problem.
Choose words $w_0,w_1$ representing non-conjugate elements of $G'$.
\PublicKey The triple $(G,w_0,w_1)$.
\PrivateKey The set $S$.
\Encoding
To encrypt $b\in\{0,1\}$ randomly rewrite a word $w_b$
and obtain a word $w$ satisfying $w\conj{G} w_b$.
\Decoding
Since $G'$ is a quotient of $G$ and $w_0\not\conj{G'} w_1$ it follows that
$w\conj{G} w_b$ if and only if $w\conj{G'}w_b$.
Hence to decrypt $w$ it is sufficient to check if
$w\conj{G'}w_0$ or $w\conj{G'}w_1$.
\end{algorithmic}
\end{algorithm}

We can use the membership search problem in a similar way.
Note that the reaction attack of Vasco and Steinwandt applies  to the
both variations of the original protocol.
Nevertheless these modifications are interesting in their own right.

\subsection{Mode of computations and main results}
\label{se:mode_computations}

All computations are assumed to be performed on a random access machine.
We use notation $\tO(n^c)$ to denote the class of functions $\bigcup_{k \ge 0} O(n^c \ln^k(n))$.

Now we formalize the challenger-solver game analysis. Let $D \subseteq \FreeGroup{X}$ be the set of positive instances of some problem and $\{\mu_{n}\}_{n \ge 0}$ be a system of probability measures (distributions) on $D$. We assume that these measures are given in a way that it is easy to sample elements according to them (for example, by an efficient algorithm). The index $n$ is considered to be a complexity parameter, so $\mu_{n}$ gives a probability distribution on the subset of $D$ of instances of complexity $n$. For a given $n$ the challenger generates a random instance $d$ according to $\mu_{n}$ and sends it to a solver $\CA$. Let $T_\CA(d)$ be the time spent by $\CA$ on $d$. We say that the solver $\CA$ solves the randomized search problem $(D,\{\mu_{n}\}_{n \ge 0})$
\emph{generically} in time $T(n)$ if:
\[
	\mu_{n} \rb {d \in D \mid T_\CA(d)\le T(n)} \underset{n \to \infty}{\rightarrow} 1.	
\]
For more on generic case complexity see~\cite{KMSS1,GMMU:2007}.

In this paper we consider particular generators inspired by Algorithm~\ref{A:WMRandomEqualWord}, namely, Algorithm~\ref{A:esp_generator} for $\WSP$ and $\ESP$,
Algorithm~\ref{A:csp_generator} for $\CSP$,
Algorithm~\ref{A:msp_generator} for $\MSP$. The solvers $\CA_{WP}$, $\CA_{CP}$ (\cite{Ushakov:thesis, MSU_book:2011}), and $\CA_{MP}$ are discussed in Section~\ref{se:SearchAlgorithms}.
The main results of this paper are the following theorems, which are proven in Section~\ref{Sec:diagramsAndTrees}.

\begin{thma}{\textbf{A.}}
\emph{For any finite presentation $\gpr{X}{R}$ Algorithm $\CA_{WP}$
solves the randomized problem $(\WSP, \{\mu_n\}_{n \ge 0})$ defined by Algorithm~\ref{A:esp_generator} generically in polynomial time $\tilde{O}\rb{n^{1 + e^2 \ln L(R)}}$.
}
\end{thma}

\begin{thma}{\textbf{B.}}
\emph{For any finite presentation $\gpr{X}{R}$ and $w\in \FreeGroup{X}$
Algorithm $\CA_{WP}$ solves the randomized problem
$\left(\ESP(w), \{\mu_{n, w}\}_{n \ge 0}\right)$ defined by Algorithm~\ref{A:esp_generator} generically in polynomial time $\tilde{O}\rb{(\FMnorm{w} + n) n^{e^2 \ln L(R)}}$.
}
\end{thma}

\begin{thma}{\textbf{C.}}
\emph{
For any finite presentation $\gpr{X}{R}$ and $w\in \FreeGroup{X}$ Algorithm $\CA_{CP}$ solves the randomized problem
$\left(\CSP(w), \{\nu_{n, w}\}_{n \ge 0} \right)$ defined by Algorithm~\ref{A:csp_generator} generically in polynomial time $\tilde{O}\rb{\FMnorm{\CReduced{w}}(\FMnorm{\CReduced{w}} + n) n^{2 e^2 \ln L(R)}}$.
}
\end{thma}

\begin{thma}{\textbf{D.}}
\emph{
For any finite presentation $\gpr{X}{R}$ and a finite set $H \subset \FreeGroup{X}$ Algorithm $\CA_{MP}$ solves the randomized problem $\left(\MSP(H), \{\rho_{k, n, H}\}_{k, n \ge 0}\right)$ defined by Algorithm~\ref{A:msp_generator0} generically in polynomial time $\tilde{O}\rb{(k + n)n^{e^2 \ln L(R)}}$.
}
\end{thma}

\begin{thma}{\textbf{E.}}
\emph{
For any finite presentation $\gpr{X}{R}$ and a finite set $H \subset \FreeGroup{X}$ Algorithm $\CA_{MP}$ solves the randomized problem $\left(\MSP(H), \{\rho'_{n, q, H}\}_{n \ge 0}\right)$ defined by Algorithm~\ref{A:msp_generator} generically in polynomial time $\tilde{O}\rb{n^{1 + e^2 \ln L(R)}}$.
}
\end{thma}

Similar analysis was done in~\cite{MU1,Ushakov:thesis} for a different type of challengers. The algorithms in~\cite{MU1,Ushakov:thesis} generate words in $(X^\pm)^\ast$, i.e., nonreduced words. Here all generated words are reduced, therefore, this work is a significant improvement over~\cite{MU1,Ushakov:thesis}.

\subsection{Outline}

In Section~\ref{S:generators} we describe particular challengers for the word, equivalence, conjugacy and membership search problems. The solvers are discussed in Section~\ref{se:SearchAlgorithms}. In Section~\ref{Sec:diagramsAndTrees} we consider random trees associated with random instances generated by the challengers and use them to prove the main results. In the proofs we use Crump-Mode-Jagers processes (see the appendix for the overview).


\section{Random instances of search problems}
\label{S:generators}

In this section we formalize the key generation procedure (Algorithm~\ref{A:WMRandomEqualWord}) by making each word transformation explicit. In Sections~\ref{Sub:csp_generator} and \ref{Sub:msp_generator} we propose similar procedures for generation of random conjugates and random elements of finitely generated subgroups.

We use the following notation throughout the paper.
For $n \in \MN$ by $\URandom{n}$ denote a uniformly random element of the set $\{0, \dots, n\}$.
For any distribution $\DRandom{S}$ on a set $S$ we denote by $u \gets \DRandom{S}$ an element $u$ sampled according to $\DRandom{S}$. For $S \subseteq (X^\pm)^\ast$ or $S \subseteq \FreeGroup{X}$ we denote by $\LDRandom{S}$ the distribution of the length $\FMnorm{w}$ induced by $\DRandom{S}$.

For a fixed a finite presentation $\gpr{X}{R}$ of a group $G$ define the set of {\em elementary identities}:
\begin{equation}\label{E:TGSetDef}
\ElementaryIdentities = \Set{xx^{-1}, \, x^{-1}x}{x \in X} \cup R \subset (X^\pm)^\ast,
\end{equation}
and fix an arbitrary distribution $\DRandom{\ElementaryIdentities}$ on $\ElementaryIdentities$. A \emph{random $\ElementaryIdentities$-transformation} of a word $w$ is an insertion of a word $u \gets \DRandom{\ElementaryIdentities}$ into $w$ at the position $p \gets \URandom{\FMnorm{w}}$ (without cancelation).

\subsection{Equivalent words.}
\label{Sub:wsp_generator}
The following algorithm generates a random word equivalent to $w$ in $G$.
\begin{algorithm}{RandomEqualWord($\gpr{X}{R}, w, n$)}
\label{A:esp_generator}
\begin{algorithmic}[1]
	\Require A finite presentation $\gpr{X}{R}$, a word $w \in \FreeGroup{X}$, and $n \in \MN$.
	\Ensure A word $w'$ equivalent to $w$ in $G$.
	\State $w_0 = w$.
	\For{$i = 1$ to $n$}
		\State Apply a random $\ElementaryIdentities$-transformation to $w_{i-1}$ to get $w_i$.
	\EndFor
	\State \Return $\Reduced{w_n}$.
\end{algorithmic}
\end{algorithm}

Note that unlike Algorithm~\ref{A:WMRandomEqualWord} Algorithm~\ref{A:esp_generator} does not explicitly remove relators from $w_{i-1}$ (if they occur in $w_{i-1}$). It does that implicitly by inserting an inverse $r^{-1}$ next to an occurrence of $r \in \ElementaryIdentities$ in $w_{i-1}$. If the subword $r \circ r^{-1}$ (or $r^{-1} \circ r$) is not changed and is present in $w_n$, then the free cancelation on step $5$ removes it.

Now, for a word $w \in \FreeGroup{X}$ define a set:
\[
	\ESP(w) = \Set{w' \in \FreeGroup{X}}{w' =_G w}.
\]
Clearly, Algorithm~\ref{A:esp_generator} generates elements of $\ESP(w)$ on the input $w$ and for every $n \in \MN$ it defines a probability measure $\mu_{n,w}$ on $\ESP(w)$:
\[
    \mu_{n,w}(w') = \Prob{\text{Algorithm~\ref{A:esp_generator} generates $w'$ from $w$ in $n$ steps}}.
\]
The \emph{support} of $\mu_{n,w}$ is the finite set:
\[
	\supp(\mu_{n,w}) = \Set{w' \in \FreeGroup{X}}{\mu_{n,w}(w') > 0} \subseteq \ESP(w).
\]
Let us point out some properties. In general, $\mu_{n,w}$ is not uniform on $\supp(\mu_{n,w})$ and for a fixed $w$ the sets $\{\supp(\mu_{n,w})\}_{n \ge 0}$ are not disjoint because a word can be generated in several different ways (cf.~\cite{Elder_Rechnitzer_Rensburg:2013}).
If $\supp{\DRandom{\ElementaryIdentities}} = \ElementaryIdentities$, then:
\begin{equation}
\label{Eq:muSupport}
	\ESP(w) = \bigcup_{n \in \MN} \supp(\mu_{n,w}).
\end{equation}
For $w = \varepsilon$ we use the notation $\mu_n$ for $\mu_{n, \varepsilon}$ and $\WSP$ for $\ESP(\varepsilon)$.

\subsection{Random conjugates}
\label{Sub:csp_generator}

In this section we define a generator of random conjugates of a given word $w$ in $G$.
The most straightforward way to generate a conjugate of $w$ is to conjugate
$w$ in the free group $F(X)$ to obtain $c^{-1}wc$ and then apply Algorithm \ref{A:esp_generator} to $c^{-1}wc$.
Under natural assumptions on the choice of $c$ we will be able to generate every conjugate of $w$.
Nevertheless we prefer another approach because this one has the following bias.
If $u$ and $v$ are cyclic permutations of each other, then the distributions defined
for $u$ and $v$ are not the same (an annular diagram constructed for, say $u$,
has a long tail attached to the beginning/end of the cyclic $u$).
That is the reason why we consider another  generation method.

\begin{algorithm}{RandomConjugate($\gpr{X}{R}, w, n$)}
\label{A:csp_generator}
\begin{algorithmic}[1]
	\Require A finite presentation $\gpr{X}{R}$, a word $w \in \FreeGroup{X}$ with $w \neq \varepsilon$, and $n \in \MN$.
	\Ensure A word $w'$ conjugate to $w$ in $G$.
    \State Cyclically reduce $w$.
	\State Split $\CReduced{w} = w_0 \circ x$ with $x \in X^\pm$.
	\State $u \gets \REW(\gpr{X}{R}, w_0, n)$.
	\State \Return A random uniformly chosen cyclic permutation of $\CReduced{u \circ x}$.
\end{algorithmic}
\end{algorithm}
The first and last positions of $\CReduced{w}$ correspond to the same position of $\CReduced{w}$ as a cyclic word. To avoid counting it twice we perform step~$2$. Another way to think of $\CReduced{w}$ is as of an annular diagram boundary word. Its first and last positions correspond to the same point on the boundary.
So to pick a uniformly random position on the boundary is the same as to pick a uniformly random position of the word $w_0$.

For a word $w$ Algorithm \ref{A:csp_generator} generates elements of the set:
\[
	\CSP(w) = \Set{w' \in \FreeGroup{X}}{w' \conj{G} w \text{ and } w' = \CReduced{w'}}
\]
and for $n \in \MN$ it defines a probability measure $\nu_{n, w}$ on $\CSP(w)$.
It is easy to see that $\nu_{n, u}=\nu_{n, v}$ for words $u, v$ conjugate in the corresponding free group. If $\supp{\DRandom{\ElementaryIdentities}} = \ElementaryIdentities$, by~\eqref{Eq:muSupport} it holds:
\[
	\CSP(w) = \bigcup_{n \in \MN} \supp(\nu_{n, w}).
\]

\subsection{Random subgroup elements}
\label{Sub:msp_generator}
Let $H = \{h_1, \ldots, h_k\}$, $H^{\pm} = H \cup H^{-1}$, and $\gp{H}$ be a subgroup of $G$.
Fix an arbitrary distribution $\DRandom{H^{\pm}}$ on $H^{\pm}$ so we are able to sample random elements from $H^{\pm}$.

A word $w$ represents an element of $\gp{H}$ if and only if it is equal in $G$ to a product $w_h$ of elements from $H^{\pm}$.
The most straightforward way to generate such a word is to pick a product $w_h$ and apply a sequence of $\ElementaryIdentities$-transformations to it.
 We formalize this approach in the following algorithm.
\begin{algorithm}{RandomSubgroupWord($\gpr{X}{R}, H, k, n$)}
\label{A:msp_generator0}
\begin{algorithmic}[1]
	\Require A finite group presentation $\gpr{X}{R}$, a finite set $H\subseteq F(X)$, $k, n \in \MN$.
	\Ensure A word $w' \in \gp{H}$ in $G$.
	\State $v \gets v_1 \circ v_2 \circ \dots \circ v_k$ with $v_i \gets \DRandom{H^{\pm}}$.
	\State $u \gets \REW(\gpr{X}{R}, v, n)$.
	\State \Return $\Reduced{u}$
\end{algorithmic}
\end{algorithm}
Define the set:
\[
	\MSP(H) = \Set{w' \in \FreeGroup{X}}{w' \in \gp{H} \text{ in } G}.
\]
Clearly Algorithm \ref{A:msp_generator0} generates elements of $\MSP(H)$ and for $k, n \in \MN$
it defines a probability measure $\{\rho_{k,n,H}\}_{k, n \ge 0}$ on $\MSP(H)$.
If $\supp{\DRandom{\ElementaryIdentities}} = \ElementaryIdentities$ and $\supp{\DRandom{H^{\pm}}} = H^{\pm}$, then:
\[
	 \MSP(H) = \bigcup_{k, n\in\MN}\supp(\rho_{k,n,H}).
\]

We can also use another approach, which iteratively builds up a word by expanding its base in $H$ (attaching $h \in H^{\pm}$ to the end of the word) and increasing its complexity in $G$ (applying $\ElementaryIdentities$-transformations). The parameter $q$ of the following algorithm defines which type of operations we favor more.
\begin{algorithm}{RandomSubgroupWord2($\gpr{X}{R}, H, n, q$)}
\label{A:msp_generator}
\begin{algorithmic}[1]
	\Require A finite group presentation $\gpr{X}{R}$, a finite set $H\subseteq F(X)$, $n \in \MN$, $q \in (0,1)$.
	\Ensure A word $w' \in \gp{H}$ in $G$.
	\State $w_0 = \varepsilon$
	\For{$i = 1$ to $n$}
		\State With probability $q$ set $w_{i} \gets w_{i-1} \circ u$ with $u \gets \DRandom{H^{\pm}}$ or otherwise apply a random $\ElementaryIdentities$-transformation to $w_{i-1}$ to get $w_i$.
	\EndFor	
	\State \Return $\Reduced{w_n}$
\end{algorithmic}
\end{algorithm}
It is easy to see that if $H = \emptyset$ or $q = 0$, then Algorithm~\ref{A:msp_generator} is equivalent to Algorithm~\ref{A:esp_generator}.
Clearly, Algorithm \ref{A:msp_generator} generates elements of $\MSP(H)$ and for $n \in \MN$, $q \in (0,1)$
it defines a probability measure $\rho'_{n, q, H}$ on $\MSP(H)$.
If $\supp{\DRandom{\ElementaryIdentities}} = \ElementaryIdentities$ and $\supp{\DRandom{H^{\pm}}} = H^{\pm}$, then:

\[
	 \MSP(H) = \bigcup_{n\in\MN}\supp(\rho'_{n, q, H}).
\]


\section{Search algorithms for finitely presented groups}
\label{se:SearchAlgorithms}

There are several general techniques for solving search problems in groups.
All the search problems under consideration are recursively enumerable and hence
can be solved by a total enumeration using relators of a group presentation.
Also, one can use a version of coset enumeration (the Todd-Coxeter algorithm, see~\cite{ToddCoxeter:1936})
or the Knuth-Bendix algorithm (\cite{Gilman:1979,Epstein:1991}).
Both algorithms can be used to solve $\WSP$, but were originally designed for other purposes.
The Todd-Coxeter algorithm attempts to construct the Cayley graph (or, more generally, the Schreier graph) of a group $G$.
The Knuth-Bendix algorithm attempts to find a complete rewriting system for a given group presentation.
There are no known (to the authors) complexity upper bounds for these algorithms
in the context of all finitely presented groups.

In this paper we use algorithms proposed in~\cite{Ushakov:thesis} that were
specifically designed to solve $\WSP$ and $\CSP$ in finitely presented groups.
Here we use slightly different notation and denote Algorithm $\CA$ of \cite{MSU_book:2011} solving $\WSP$ by $\CA_{WP}$
and Algorithm $\CC$ solving $\CSP$ by $\CA_{CP}$.
In Section~\ref{Sub:msp_solver} we introduce Algorithm $\CA_{MP}$ (similar to $\CA_{WP}$) to solve $\MSP$.
The time complexity of these algorithms depends on the notion of {\em depth},
which measures complexity of input words and is defined as a parameter of the corresponding diagrams.
In the next section we shortly review basic definitions for diagrams and depth
(see~\cite{Lyndon-Schupp:2001,Ol_book,Rileybook2007}) and
discuss the time complexity of Algorithms~$\CA_{WP}$ and $\CA_{CP}$.

\subsection{Diagrams}
\label{Sub:diagrams_preliminaries}

For a set $S \subset \EP$ let $\partial S$ be its boundary
and $\overline{S}$ the closure of $S$ in $\EP$. Let $D$ be a finite connected planar $X$-digraph with set of vertices $V(D)$ and set of edges $E(D)$. Let $C(D)$ be a set of \emph{cells} of $D$ which are connected and simply connected bounded components of $\MR^2\setminus D$. The unbounded component of $\MR^2\setminus D$ is called the \emph{outer cell} of $D$ denoted by $c_{\text{out}}$. An edge $e \in E(D)$ is \emph{free} if it does not belong to $\partial c$ for any $c \in C(D)$. For any $e \in E(D)$ we denote its label by $\mu(e) \in X^{\pm}$. The boundary of a cell $c \in C(D)$ traversed in a counterclockwise direction starting from some vertex of $c$ makes a closed path $e_1 \dots e_n$ giving the word $\mu(c) = \mu(e_1) \dots \mu(e_n) \in (X^{\pm})^\ast$ called a \emph{boundary label} of $c$. Depending on a starting vertex we get a cyclic permutation of the same word.

For the rest of this subsection let $D$ be a finite connected planar $X$-digraph with a \emph{base vertex} $v_0 \in V(D) \cap \partial c_{\text{out}}$.
The graph $D$ is a \emph{van Kampen diagram} over $\gpr{X}{R}$ if $\mu(c) \in R^{\star}$ for every $c \in C(D)$.
The boundary label $\mu(D)$ of $D$ is the boundary label of $\partial c_{\text{out}}$ read starting from $v_0$ in a counterclockwise direction.
Note that we need also to specify the first edge to read from $v_0$, that is, the starting boundary position, but it is not important for our considerations so we omit this issue.
\begin{lemma}[van Kampen lemma]
A word $w \in (X^{\pm})^\ast$ represents the identity of the group $\group{X}{R}$ if and only if
there exists a van Kampen diagram over $\gpr{X}{R}$ with $\mu(D) = w$.
\end{lemma}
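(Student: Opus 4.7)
The plan is to establish both implications separately. For the easier direction (existence of a diagram implies $w =_G \varepsilon$), I would induct on $|C(D)|$. The base case $|C(D)| = 0$ forces $D$ to be a finite tree, so its counterclockwise boundary walk traverses every edge once in each direction and $\mu(D)$ freely reduces to $\varepsilon$. For the inductive step I would choose a cell $c$ whose boundary shares a nonempty arc with $\partial c_{\text{out}}$, delete $c$ (together with any newly free edges) to obtain a smaller van Kampen diagram $D'$, and verify that $\mu(D) \cdot \mu(D')^{-1}$ is a conjugate of $\mu(c) \in R^\star$ in $F(X)$. Since such conjugates lie in $\ncl_F(R)$, we get $\mu(D) =_G \mu(D')$, and by induction $\mu(D') =_G \varepsilon$, hence $w =_G \varepsilon$.

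For the converse, assume $w =_G \varepsilon$, so $w \in \ncl_F(R)$ and we can write
\[
w = \prod_{i=1}^n g_i\, r_i\, g_i^{-1} \qquad \text{in } F(X)
\]
for some $g_i \in F(X)$ and $r_i \in R^{\pm}$. I would first construct a planar ``cactus'' diagram $D_0$ as follows: from a base vertex $v_0$, for each $i$ attach a stem-path labeled $g_i$ ending at the base of a $2$-cell with boundary label $r_i$, followed by the same stem traversed in reverse. Concatenating these $n$ lollipops at $v_0$ produces a van Kampen diagram over $\gpr{X}{R}$ whose counterclockwise boundary label, read starting from $v_0$, is literally the word $g_1 r_1 g_1^{-1}\cdots g_n r_n g_n^{-1}$, which equals $w$ as an element of $F(X)$.

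To turn $D_0$ into a diagram with boundary label equal to $w$ as a word in $(X^\pm)^\ast$, I would iteratively fold pairs of consecutive boundary edges $e\, e^{-1}$ that cancel when reducing the cactus's boundary word to $\overline{w}$: identify the two edges into a single edge and continue. Each fold strictly decreases the length of the boundary word, preserves planarity, and keeps every $2$-cell's boundary label in $R^\star$; after finitely many folds one obtains a van Kampen diagram whose boundary label is $\overline{w}$. If $w$ itself is not freely reduced one retains (or reinserts) the corresponding pendant free edges on the boundary, yielding a diagram $D$ with $\mu(D) = w$ exactly.

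The main obstacle will be verifying the folding step rigorously. A fold may identify two boundary edges that already share an endpoint (creating a loop or multi-edge), or may cause two previously distinct cells to become glued along a common edge. One must check that planarity, simple connectedness of each $2$-cell, and the property $\mu(c) \in R^\star$ are preserved through every fold. The cleanest way is to view $D_0$ as a $CW$-structure on a $2$-disk with boundary labeled by the unreduced product, realizing each fold as a controlled quotient of the disk; the details, while routine, constitute the technical heart of the van Kampen lemma.
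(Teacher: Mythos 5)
The paper does not prove this lemma: it is stated as the classical van Kampen lemma and implicitly deferred to the cited references (Lyndon--Schupp, Ol'shanskii, Brady--Riley--Short), so there is no in-paper argument to compare against. Your sketch is the standard textbook proof, and it is correct in outline. For the ``diagram $\Rightarrow$ trivial'' direction, the cell-removal induction works; the only point worth making explicit is why a cell sharing an edge (not just a vertex) with $\partial c_{\text{out}}$ exists --- take a generic arc from the unbounded component into any cell and look at the first edge it crosses; that edge separates the outer cell from an ``outermost'' cell, which is the one to delete. Your computation that $\mu(D)\mu(D')^{-1}$ is conjugate to a cyclic permutation of $\mu(c)^{\pm 1}$, hence lies in $\ncl_F(R)$, is exactly the classical step. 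For the converse, the lollipop/cactus construction followed by boundary folding is the classical argument of van Kampen and Lyndon; you are right that the folding is the technical heart, and your proposed remedy --- realize each fold as a pinch of the disk along the outer face, so that planarity is automatic and only boundary edges are ever identified --- is the correct way to organize it (note that since each boundary edge is either free or bounds exactly one cell, a fold never forces two cells to be identified with each other, and the paper's definition of a cell as a connected, simply connected component of $\MR^2\setminus D$ tolerates non-simple cell boundaries created by pinching). Finally, passing from boundary label $\Reduced{w}$ to the possibly unreduced $w$ by attaching spurs labeled $xx^{-1}$ at the appropriate boundary vertices is legitimate, since the paper's diagrams explicitly allow free edges. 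So the proposal is a correct sketch of the classical proof that the paper omits.
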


We generalize van Kampen diagrams to the case of subgroup elements.
Let $H$ be a finite generating set of a subgroup $\gp{H}$ of $\group{X}{R}$. The graph $D$ is a \emph{generalized van Kampen diagram} over $\gp{H} \le \group{X}{R}$ if for every $c \in C(D)$ one of the following holds: either $\mu(c) \in R^{\star}$ or $v_0 \in \partial c$ and the label $\mu(c)$ read starting from $v_0$ belongs to $H^\pm$.
We call the first type of cells \emph{$R$-cells} and the second type \emph{$H$-cells}. The boundary label of $D$ is defined in the same way as for van Kampen diagrams. It is easy to see that $w$ represents an element of $\gp{H}$ in $G$ if and only if there exists a generalized van Kampen diagram $D$ over $\gp{H} \le \group{X}{R}$ with $\mu(D) = w$.

Now let us exclude one of the cells from $C(D)$ and call it the \emph{inner cell} $c_{\text{in}}$ of $D$. Denote $v_0$ by $v_{\text{out}}$ and pick any vertex $v_{\text{in}} \in V(D) \cap \partial c_{\text{in}}$. We call $D$ an \emph{annular (Schupp) diagram} (see~\cite{Schupp:1968}) over $\gpr{X}{R}$ if $\mu(c) \in R^{\star}$ for any $c \in C(D)$. Its two boundary labels $\mu_{\text{in}}(D) = \mu(c_{\text{in}})$ and $\mu_{\text{out}}(D) = \mu(c_{\text{out}})$ read in a counterclockwise direction from $v_{\text{in}}$ and $v_{\text{out}}$ correspondingly, are called the \emph{inner} and \emph{outer} labels of $D$. For any $w_1, w_2 \in (X^\pm)^\ast$ we have that $w_1 \conj{G} w_2$ if and only if there exists an annular diagram $D$ over $\gpr{X}{R}$ with $\mu_{\text{in}}(D) = w_1$ and $\mu_{\text{out}}(D) = w_2$.

We measure diagram complexity using a notion of depth (introduced in~\cite{MSU_book:2011}). For a (van Kampen, generalized van Kampen, or annular) diagram $D$ define the {\em dual graph} $D^\ast = (V^\ast,E^\ast)$ as an undirected graph with $V^\ast = C(D) \cup c_{\text{out}}$ (for annular diagrams we add $c_{\text{in}}$) and $E^\ast = \{(c_1,c_2) \mid \partial c_1 \cap \partial c_2 \ne\emptyset \}$. We denote the graph distance in $D^\ast$ by $d^\ast$.

The \emph{depth} of a (generalized) van Kampen diagram $D$ is defined by:
\[
    \delta(D) = \max_{c \in C(D)} d^\ast(c, c_{\text{out}}).
\]
The \emph{depth} of an annular diagram $D$ is:
\[
	\delta(D) = \max_{c \in C(D)} \left[ \min \left( d^\ast(c, c_{\text{out}}), d^\ast(c, c_{\text{in}}) \right) \right].	
\]
\begin{remark}
There is a similar notion of a diagram radii (see~\cite{Gersten-Riley:2002, Rileybook2007}).
\end{remark}

Define the \emph{depth} of a word $w \in \FreeGroup{X}$ as:
\[
	\delta(w) = \min_{\substack{D \text{ is}\\
								\text{a van Kampen}\\
								\text{diagram}}}
					\Set{\delta(D)}{\mu(D) = w}
\]
if $w =_G \varepsilon$ and $\delta(w) = \infty$ otherwise, the \emph{conjugate depth} of two words $w_1, w_2 \in \FreeGroup{X}$ as:
\[
	\delta_{\conj{}}(w_1, w_2) =
		\min_{\substack{D \text{ is}\\
						\text{an annular}\\
						\text{diagram}}}
		\Set{\delta(D)}{\mu_{\text{in}}(D) = w_1, \mu_{\text{out}}(D) = w_2}
\]
if $w_1 \conj{G} w_2$ and $\infty$ otherwise, and the \emph{depth} of a word $w \in \FreeGroup{X}$ with respect to a finite set $H \subset (X^\pm)^\ast$ as:
\[
\delta_H(w) =
		\min_{\substack{D \text{ is}\\
						\text{a generalized}\\
						\text{van Kampen}\\	
						\text{diagram}}}
		\Set{\delta(D)}{w(D) = w}
\]
if $w \in \gp{H}$ in $G$ and $\infty$ otherwise.

Recall that by $\tilde{O}$ we denote the soft-mod complexity introduced in Section \ref{se:mode_computations}. Set $L(R) = \sum_{r\in R} |r|$.

\begin{theorem}[Theorem~16.4.3 in~\cite{MSU_book:2011}]
\label{Thm:wspSolverComplexity}
Let $G$ be a group given by a finite symmetrized presentation $\gpr{X}{R}$ and $w \in \FreeGroup{X}$.
Algorithm $\CA_{WP}$ stops on the input $\gpr{X}{R}$, $w$ if and only if $w =_G \varepsilon$.
Furthermore, it terminates in at most $\delta(w)$ iterations and the time complexity of Algorithm $\CA_{WP}$ is bounded above by:
\[
	\tilde{O}\rb{ \Fnorm{w} L(R)^{\delta(w)}}.
\]
\end{theorem}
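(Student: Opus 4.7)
The plan is to establish the three claims of the theorem---correctness, termination within $\delta(w)$ iterations, and the $\tO(\Fnorm{w} L(R)^{\delta(w)})$ runtime bound---by interpreting each iteration of $\CA_{WP}$ as peeling off the outermost shell of a minimum-depth van Kampen diagram for $w$, and tracking how word length grows under this peeling.

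Correctness is the easiest piece. Every transformation performed by $\CA_{WP}$ replaces a subword $u$ of the current word $w_i$ by another word $u'$ with $u u'^{-1} \in \ncl_F(R)$ (in fact, $u u'^{-1}$ is a single element of $R^\star$ up to free reduction), so $w_i =_G w_{i+1}$. If $w \ne_G \varepsilon$ then no sequence of such substitutions can reach $\varepsilon$, while if $w =_G \varepsilon$ the peeling argument below forces termination at $\varepsilon$; this gives the ``if and only if'' part.

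For the iteration bound I would fix a van Kampen diagram $D_0$ with $\mu(D_0) = w$ and $\delta(D_0) = \delta(w)$ and define inductively $D_{i+1}$ to be $D_i$ with every depth-$0$ cell (those $c$ with $d^\ast(c, c_{\text{out}}) = 0$) removed and the resulting boundary freely reduced. The key lemma is that the substitutions carried out by $\CA_{WP}$ in iteration $i$ realize exactly this peeling on the word level, so $w_{i+1} = \mu(D_{i+1})$; consequently $\delta(D_{i+1}) \le \delta(D_i) - 1$, and after at most $\delta(w)$ iterations all cells are gone and $w_{\delta(w)} = \varepsilon$. For the complexity bound, one iteration replaces each matched fragment by the remainder of a relator of length at most $L(R)$, so $\Fnorm{w_{i+1}} \le L(R) \cdot \Fnorm{w_i}$ and inductively $\Fnorm{w_i} \le \Fnorm{w} \cdot L(R)^i$. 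Locating all candidate substitutions in $w_i$ against the symmetrized set $R^\star$ can be done in $\tO(\Fnorm{w_i})$ time using suffix or hash structures built once in preprocessing, which gives $\tO(\Fnorm{w} L(R)^i)$ per iteration and the claimed $\tO(\Fnorm{w} L(R)^{\delta(w)})$ in total, the polylogarithmic factors from string-matching being absorbed in $\tO$.

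The main obstacle is the peeling lemma itself. The algorithm sees only the word $w_i$, not any diagram, so one must show that (i) every depth-$0$ cell of the minimum diagram $D_i$ contributes a subword of $w_i$ that $\CA_{WP}$ detects, and (ii) the simultaneous application of all these substitutions, followed by free reduction, produces exactly the boundary label of $D_{i+1}$. The difficulty lies in handling adjacent depth-$0$ cells that share boundary arcs: the shared arcs must cancel after peeling, and one has to verify that whichever order $\CA_{WP}$ processes matches in does not create spurious conflicts or leave behind untreated boundary fragments. This is a careful combinatorial argument about boundary arcs and free reduction in planar diagrams, and it is where the bulk of the technical effort sits; everything else---correctness and the bookkeeping for length growth and runtime---follows cleanly once the peeling correspondence is established.
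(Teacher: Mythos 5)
Your proposal reconstructs $\CA_{WP}$ as a Dehn-algorithm-style word-rewriting procedure that scans $w_i$ for subwords matching pieces of relators in $R^{\star}$, replaces them, and thereby peels off the depth-$0$ cells of a minimal diagram one layer per iteration. That is not how $\CA_{WP}$ works, and the step you flag as ``the main obstacle'' --- the peeling lemma --- is not a technical hurdle but the point where this route fails. In a minimal-depth van Kampen diagram a depth-$0$ cell need only meet the outer boundary in a single vertex (dual adjacency requires $\partial c \cap \partial c_{\text{out}} \ne \emptyset$ and nothing more), so such a cell contributes no subword of $w_i$ whatsoever; for a general finite presentation there is no detectable substitution in the word realizing its removal, and greedy relator-matching neither terminates in $\delta(w)$ rounds nor is guaranteed to make progress at all outside small-cancellation settings. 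Your claims (i) and (ii) are therefore unprovable as stated, and the correctness direction ``if $w=_G\varepsilon$ then the procedure reaches $\varepsilon$'' rests entirely on them.

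The actual algorithm --- which this paper does not prove correct (Theorem~\ref{Thm:wspSolverComplexity} is imported from \cite{MSU_book:2011}), but whose mechanism is exposed by Algorithm~\ref{A:DecisionMSP} and the proof of Theorem~\ref{Thm:mspSolverComplexity} --- operates on $X$-digraphs rather than on words: it starts from the segment $\Gamma_0 = \Gamma(w)$, repeatedly applies $R$-completion $\CC$ (attach a loop labelled by every $r \in R$ at \emph{every} vertex) followed by Stallings' folding $S$, and stops when the endpoints $v_0$ and $v_k$ become identified. Correctness is immediate from Proposition~\ref{pr:completion_properties}(b): $\gamma(\CL(\Gamma_i))$ is invariant under $\CC$ and $S$, so $v_0 = v_k$ forces $w =_G \varepsilon$. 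The iteration bound runs in the opposite direction to your peeling: after $i$ rounds the folded completion contains a folded image of every cell at dual distance less than $i$ from $c_{\text{out}}$ in a minimal diagram, so $\delta(w)$ rounds suffice to identify $v_0$ with $v_k$; no detection of cells inside the word is ever needed, since all relator loops are attached everywhere unconditionally. Your length bookkeeping $\Fnorm{w_i} \le \Fnorm{w}\, L(R)^i$ happens to mirror the correct graph bound $|\CC^{i}(\Gamma_0)| \le |\Gamma_0| \cdot L(R)^i$, and near-linear folding supplies the polylogarithmic factors absorbed by $\tO$, but this quantitative shell is the only part of your argument that survives the change of model.
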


\begin{theorem}[Theorem~17.6.12 in~\cite{MSU_book:2011}]
\label{Thm:cspSolverComplexity}
Let $G$ be a group given by a finite symmetrized presentation $\gpr{X}{R}$ and $w_1, w_2 \in \FreeGroup{X}$.
Algorithm $\CA_{CP}$ stops on the input $\gpr{X}{R}$, $w_1$, $w_2$ if and only if $w_1 \conj{G} w_2$.
Furthermore, it terminates in at most $\delta_{\conj{}}(w_1, w_2)$ iterations and the time complexity of Algorithm $\CA_{CP}$ is bounded above by:
\[
	\tilde{O}\rb{ \Fnorm{w_1} \Fnorm{w_2} L(R)^{2 \delta_{\conj{}}(w_1, w_2)}}.
\]
\end{theorem}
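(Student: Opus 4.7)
The plan is to view $\CA_{CP}$ as a two-sided breadth-first search through annular diagrams: at each iteration it maintains a collection of partial annular regions (``carpets'') grown separately from the inner boundary outward and from the outer boundary inward, and it halts when the two carpets can be glued along a common closed curve to form a complete annular diagram whose inner and outer labels are $w_1$ and $w_2$. The data object at iteration $d$ on the inner side is essentially the set of all reduced words that can appear as the outer boundary of an annular carpet of depth $\le d$ with inner label $w_1$; symmetrically on the outer side we grow with initial boundary $w_2$. To grow a carpet by one layer one enumerates, for every boundary position, all ways to attach an $R^\star$-cell whose boundary label shares a prefix with the current boundary; after free reduction this produces the new outer boundary. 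I would formalize this by tracking, for each carpet, the current boundary word together with a pointer into the original $w_i$ indicating which outer boundary vertex is the base vertex.

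For correctness, the direction $w_1\conj{G} w_2 \Rightarrow$ termination follows from the annular diagram lemma: there exists an annular diagram $D$ with $\mu_{\text{in}}(D)=w_1$, $\mu_{\text{out}}(D)=w_2$, and $\delta(D)=\delta_{\conj{}}(w_1,w_2)$. By definition of annular depth, every cell of $D$ lies within distance $\delta_{\conj{}}(w_1,w_2)$ of either $c_{\text{in}}$ or $c_{\text{out}}$, so one can partition $C(D)$ into two carpets, each of depth at most $\delta_{\conj{}}(w_1,w_2)$ from its respective boundary, glued along a closed separating loop. Hence after $\delta_{\conj{}}(w_1,w_2)$ iterations both carpets in the corresponding decomposition of $D$ appear in the algorithm's state, and the matching step succeeds. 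The converse and the fact that $\CA_{CP}$ halts iff $w_1\conj{G} w_2$ follow because every successful match produces a genuine annular diagram, which by the annular diagram lemma witnesses conjugacy.

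For complexity, I would bound the total work by bounding the number of carpets produced up to depth $d$. Starting from a boundary of length $\ell$, one layer adds at most one cell per boundary edge, and for each edge there are at most $|R^\star|=O(L(R))$ attachments; after one layer the boundary has length $O(\ell\cdot L(R))$. Iterating $d$ times gives a state-space bounded by $\tO(\ell\cdot L(R)^d)$ per side. Applying this to the inner side with $\ell=|w_1|$ and to the outer side with $\ell=|w_2|$, each to depth $\delta_{\conj{}}(w_1,w_2)$, and then pairing the two sides (the matching step requires comparing each inner carpet boundary with each outer carpet boundary, contributing a multiplicative cost), yields the overall bound
\[
\tO\bigl(|w_1|\,|w_2|\,L(R)^{2\delta_{\conj{}}(w_1,w_2)}\bigr).
\]
Polylogarithmic factors arising from reduced-word representation, dictionary lookups for matching carpet boundaries, and the cost of performing free and cyclic reductions are absorbed into $\tO$.

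The main obstacle, and the step that requires the most care, is the two-sided matching: one must argue that any witnessing annular diagram admits a depth-respecting ``equator" separating it into an inner carpet and an outer carpet whose gluing is exactly what the algorithm tests, and that the number of distinct boundary configurations the algorithm must compare is still $\tO(|w_1|\,|w_2|\,L(R)^{2\delta_{\conj{}}})$ rather than something larger from overcounting non-isomorphic carpets that share a boundary label. This is handled by indexing carpets by their (reduced) boundary word together with the base vertex position, so that isomorphic boundary data collapse to a single state, and by invoking the annular analog of the depth decomposition used in the proof of Theorem~\ref{Thm:wspSolverComplexity} to guarantee that the target decomposition of the optimal diagram $D$ really is discovered by iteration $\delta_{\conj{}}(w_1,w_2)$.
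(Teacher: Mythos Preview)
The paper does not prove this theorem at all: it is quoted verbatim as Theorem~17.6.12 of \cite{MSU_book:2011} and used as a black box, just as Theorem~\ref{Thm:wspSolverComplexity} is quoted from the same source. So there is no in-paper proof to compare your proposal against.

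That said, your sketch diverges from the framework the paper (and the cited book) actually uses, and this matters for the complexity claim. The algorithms $\CA_{WP}$ and $\CA_{MP}$ discussed here do not enumerate carpets or boundary words; they maintain a single finite $X$-digraph $\Gamma_i$, apply the $R$-completion operator $\CC$ (attaching a loop for every $r\in R$ at every vertex), and then fold. The crucial size bound is $|\CC^i(\Gamma_0)|\le |\Gamma_0|\cdot L(R)^i$ from Proposition~\ref{pr:completion_properties}(c), and folding is nearly linear. For $\CA_{CP}$ the analogous construction runs two such graph processes, one seeded with a cycle labeled $w_1$ and one with $w_2$, and tests for a compatible identification; the product $|w_1|\,|w_2|\,L(R)^{2\delta}$ comes from multiplying the sizes of the two completed graphs at depth $\delta$. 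Your carpet enumeration, by contrast, as written would produce not a single graph of size $O(\ell\,L(R)^d)$ but a \emph{set} of boundary words whose cardinality you have not bounded: at each layer, each boundary word spawns many successors, and you are counting length growth, not the number of distinct carpets. Collapsing by boundary label as you suggest at the end is exactly what the graph-plus-folding representation accomplishes automatically, but you would need to argue that the total size of the collapsed state is $\tO(\ell\,L(R)^d)$, which is the nontrivial content of the $R$-completion bound. Without that, your state-space estimate is unjustified and the $\tO$ bound does not follow.
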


\subsection{Algorithm for the uniform membership search problem.}
\label{Sub:msp_solver}
To solve the membership search problem we use finite inverse $X$-digraphs, an operation called {\em $R$-completion}, and Stallings' folding. We assume that the presentation $\gpr{X}{R}$ is symmetrized.

Any finite $X$-digraph $\Gamma$ with a fixed {\em base-vertex} $v_0$ can be viewed as
a finite state automaton {\em accepting} the language:
\[
	\CL(\Gamma) = \Set{\mu(p)}{p \text{ is a loop in $\Gamma$ at }v_0}.	
\]
From a given $X$-digraph $\Gamma$ one can construct a new automaton $\CC(\Gamma)$ by adding for every $r \in R$ a loop labeled by $r$ at every state $u \in \Gamma$.
By an \emph{$R$-completion} of $\Gamma$ we understand a computation of
$\CC^{k}(\Gamma)$ for some $k\in\MN$.
The following properties of $\CC(\Gamma)$ follow immediately from the construction.

\begin{proposition}
\label{pr:completion_properties}
For every $\gpr{X}{R}$ and $\Gamma$ the following holds:
\begin{itemize}
    \item[(a)]
$\Gamma$ is a subgraph of $\CC(\Gamma)$.
    \item[(b)]
$\gamma(\CL(\Gamma)) = \gamma(\CL(\CC(\Gamma)))$, where $\gamma:F(X) \rightarrow \group{X}{R}$ is a canonical epimorphism.
    \item[(c)]
$|\CC(\Gamma)| \le |\Gamma| \cdot L(R)$.
\qed
\end{itemize}
\end{proposition}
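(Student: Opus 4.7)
The plan is to prove the three parts in order; each is elementary given the definition of $\CC(\Gamma)$. Part (a) is immediate from the construction, which attaches new labeled cycles to $\Gamma$ without deleting or identifying any existing vertex, edge, or label. The inclusions $V(\Gamma) \subseteq V(\CC(\Gamma))$ and $E(\Gamma) \subseteq E(\CC(\Gamma))$ preserve labels and the base vertex, exactly witnessing $\Gamma$ as a subgraph of $\CC(\Gamma)$.

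For part (b), the forward direction is immediate from (a): any loop at $v_0$ in $\Gamma$ is also a loop in $\CC(\Gamma)$, giving $\gamma(\CL(\Gamma)) \subseteq \gamma(\CL(\CC(\Gamma)))$. For the reverse inclusion, the key structural observation is that every added $r$-loop is a freshly attached cycle of length $|r|$ sharing exactly one vertex, the anchor $u \in V(\Gamma)$, with the rest of $\CC(\Gamma)$, and distinct added loops share no non-anchor vertex. I would exploit this to factor an arbitrary loop $p$ at $v_0$ in $\CC(\Gamma)$ as $p = p_0 q_1 p_1 \cdots q_k p_k$, where each $p_i$ is a walk in $\Gamma$ and each $q_j$ is a closed excursion that enters an added $r_j$-loop at its anchor $u_j \in V(\Gamma)$ and exits at the same anchor. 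Since $q_j$ is a closed walk on a simple cycle labeled by $r_j$, its label freely reduces to a power of a cyclic permutation of $r_j^{\pm 1}$, hence $\mu(q_j) =_G \varepsilon$. Therefore $\gamma(\mu(p)) = \gamma\bigl(\mu(p_0 p_1 \cdots p_k)\bigr)$, and the concatenation $p_0 p_1 \cdots p_k$ is a loop at $v_0$ lying entirely in $\Gamma$, so its label belongs to $\CL(\Gamma)$.

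Part (c) is a straightforward count. For each of the $|V(\Gamma)|$ vertices and each $r \in R$, the construction appends a fresh cycle of length $|r|$, contributing at most $|r|$ new vertices and $|r|$ new edges. Summing over all such pairs yields total new material bounded by $|V(\Gamma)| \cdot L(R)$; combining with the pre-existing $\Gamma$ and absorbing the additive $|\Gamma|$ into the product gives the stated inequality under the size convention used in the paper.

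The only potential obstacle is purely a bookkeeping concern in part (b): one must confirm that the claimed factorization $p = p_0 q_1 p_1 \cdots q_k p_k$ truly exists, i.e., that once an excursion enters an added $r$-loop it cannot hop to another added loop without first returning to $V(\Gamma)$. This is immediate from the literal reading of the construction, where the loops are attached independently and share no new vertices, but it is worth stating explicitly so that the induction implicit in the factorization is unambiguous.
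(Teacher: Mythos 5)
Your proof is correct; the paper itself offers no argument, stating only that all three properties ``follow immediately from the construction'' and marking the proposition with \qed, and your write-up is precisely the elaboration of that claim (direct verification of (a) and (c), plus the standard decomposition of a loop in $\CC(\Gamma)$ into $\Gamma$-segments and closed excursions into the freshly attached relator cycles for (b)). The one point you flag — that an excursion entering an added $r$-loop can only re-enter $V(\Gamma)$ at its anchor, since the loop's non-anchor vertices meet nothing else — is indeed the only detail worth recording, and you handle it correctly.
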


For a word $w = w_1 \ldots w_n$ define the $X$-digraph $\Gamma(w)$
as a sequence of edges labeled with the letters of $w$ as shown
in Figure~\ref{fi:Gw_graph}.
The first vertex of $\Gamma(w)$ is denoted by $v_0$ and the last one by $v_k$. The vertex $v_0$ is the base vertex of $\Gamma(w)$.
\begin{figure}[h]
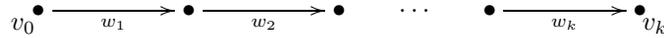

\centerline{
\xygraph{
!{<0cm,0cm>;<2cm,0cm>:<0cm,2cm>::}
!{(0,0)}*+{\bullet}="0"
!{(1,0)}*+{\bullet}="1"
!{(2,0)}*+{\bullet}="2"
!{(3,0)}*+{\bullet}="3"
!{(4,0)}*+{\bullet}="4"
"0":@[|(1.5)]"1"_{w_1}
"1":@[|(1.5)]"2"_{w_2}
"3":@[|(1.5)]"4"_{w_k}
!{(2.5,0)}*+{\ldots}="6"
!{(-.1,-.1)}*+{v_0}
!{(4.1,-.1)}*+{v_k}
}}
\caption{\label{fi:Gw_graph}The graph $\Gamma(w)$.}
\end{figure}

For $h_1, \ldots, h_k, w\in F(X)$ define a graph $\Gamma(w, h_1, \ldots, h_k)$ to be a wedge graph of $n$ loops labeled with words $h_1, \ldots, h_k$
and the graph $\Gamma(w)$ shown in Figure \ref{fi:Gws_graph}.
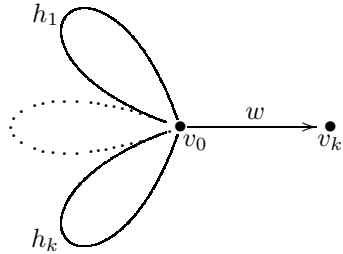
\begin{figure}[ht]
\centerline{
\begin{xy}
(0,0)*+{\bullet}="A";
(0,0)*+{}="A1";
**\crv{(-10,30)&(-30,10)}; \POS?(.5)*_+!R{h_{1}},"A1";
**\crv{(-10,-30)&(-30,-10)}; \POS?(.5)*_+!R{h_{k}},"A1";
**\crv{~*=<4pt>{.}(-30,-12)&(-30,12)}; \POS?(.5)*_+!R{},"A1";
(0,0)*+{}="A";
(20,0)*+{\bullet}="B";
{\ar@{->}"A";"B"};\POS?(.5)*_+!D{w},"B";
(2,-2)*+{v_0};
(20,-2)*+{v_k};
\end{xy}
}
\caption{\label{fi:Gws_graph}The graph $\Gamma(w, h_1, \ldots, h_k)$.}
\end{figure}

The next algorithm solves the uniform membership search problem for finitely generated subgroups of finitely presented groups.
By $S$ we denote the Stallings' folding of an $X$-digraph (see~\cite{Kapovich_Miasnikov:2002}).

\begin{algorithm}{(Uniform) Membership search problem solver $\CA_{MP}$.}
\label{A:DecisionMSP}
\begin{algorithmic}[1]
\Require A finite symmetrized presentation  $\gpr{X}{R}$ and words $w,h_1, \dots, h_k \in \FreeGroup{X}$.
\Ensure $YES$ if $w \in \gp{h_1, \ldots, h_k}$ in $G$ and a finite $X$-digraph $\Gamma_{n}$ which accepts $w$.
\State $\Gamma_0 = \Gamma(w, h_1, \ldots, h_k)$.
\While{$v_0 \ne v_k$ in $\Gamma_i$}
    \State $\Gamma_i \gets S(\CC(\Gamma_{i-1}))$.
\EndWhile
\State \Return $YES$ and the obtained graph $\Gamma_n$.
\end{algorithmic}
\end{algorithm}

The graph $\Gamma_n$ is a witness for the fact that $w \in \gp{h_1, \dots, h_k}$ in $G$.

\begin{theorem}
\label{Thm:mspSolverComplexity}
Let $G$ be a group given by a finite symmetrized presentation $\gpr{X}{R}$ and $w, h_1, \ldots, h_k \in \FreeGroup{X}$.
Algorithm \ref{A:DecisionMSP} stops on the input $\gpr{X}{R}$, $w$, $h_1$, \dots, $h_k$ if and only if $w\in \gp{h_1, \ldots, h_k}$ in $G$. Furthermore, it terminates in at most $\delta_H(w)$ iterations.
The time complexity of Algorithm \ref{A:DecisionMSP}
is bounded by:
\[
	\tilde{O}\rb{\rb{\Fnorm{w} + L(H)} L(R)^{\delta_H(w)}}.
\]
\end{theorem}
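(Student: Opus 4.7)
The plan is to mirror the proof of Theorem~\ref{Thm:wspSolverComplexity} (and its conjugacy analogue Theorem~\ref{Thm:cspSolverComplexity}), with the $H$-cells of an optimal generalized van Kampen diagram playing the role of a configuration that is already ``baked into'' the starting graph $\Gamma_0 = \Gamma(w, h_1, \dots, h_k)$. I split the argument into three parts: (i) if Algorithm~\ref{A:DecisionMSP} halts then $w \in \gp{H}$ in $G$ (soundness); (ii) if $w \in \gp{H}$ in $G$ then the algorithm halts within $\delta_H(w)$ iterations (completeness and the depth bound); and (iii) the stated time estimate.

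For soundness, termination means that $v_0 = v_k$ in $\Gamma_n$, so the path labeled $w$ becomes a loop at $v_0$ and hence $w \in \CL(\Gamma_n)$. Proposition~\ref{pr:completion_properties}(b) gives that $\CC$ preserves $\gamma(\gp{\CL(\Gamma_i)})$, and a standard property of Stallings' folding is that it preserves $\gp{\CL(\Gamma_i)}$ inside $\FreeGroup{X}$. Since $\gp{\CL(\Gamma_0)} = \gp{h_1, \dots, h_k}$ in $\FreeGroup{X}$, the image in $G$ is constant along the iteration, yielding $w \in \gp{H}$ in $G$.

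The main obstacle is the depth bound. Fix an optimal generalized van Kampen diagram $D$ with $\mu(D) = w$ and $\delta(D) = \delta_H(w) = d$. I prove by induction on $i = 0, 1, \dots, d$ that there is a label-preserving graph morphism $\phi_i \colon D^{(i)} \to \Gamma_i$, where $D^{(i)}$ denotes the closed subdiagram of $D$ consisting of $c_{\text{out}}$ together with every cell at dual distance at most $i$ from $c_{\text{out}}$. The base case uses that $\Gamma_0$ already contains the outer boundary path of $c_{\text{out}}$ and carries the $H$-cells' boundary loops at $v_0$. For the inductive step, any cell $c$ at dual distance $i + 1$ shares a boundary vertex with some cell in $D^{(i)}$, so at least one vertex of $\partial c$ lies in the image of $\phi_i$; one iteration of $\CC$ plants a loop reading $\mu(c) \in R^\star$ at that vertex, and Stallings' folding then glues this loop along the existing boundary of $\phi_i(D^{(i)})$, extending $\phi_i$ to $\phi_{i+1}$. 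When $i = d$ we have $D^{(d)} = D$, the outer boundary of $D$ is realized as a closed loop at $v_0$ in $\Gamma_d$, and hence $v_0 = v_k$. A subtle point in this inductive construction is that folding must not identify $v_0$ with $v_k$ prematurely, since otherwise $D$ would fail to be minimal; this is what forces the iteration bound to be exactly $\delta_H(w)$ rather than something smaller.

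For the time bound, Proposition~\ref{pr:completion_properties}(c) gives $|\CC(\Gamma_i)| \le L(R) |\Gamma_i|$, and Stallings' folding is non-increasing in size, so inductively $|\Gamma_i| \le (\Fnorm{w} + L(H)) L(R)^i$. Each iteration of $\CC$ followed by folding can be performed in $\tO(|\Gamma_i|)$ time using standard union-find data structures, and summing over $i = 1, \dots, \delta_H(w)$ is dominated by the final term, yielding the claimed $\tO\rb{(\Fnorm{w} + L(H)) L(R)^{\delta_H(w)}}$ bound.
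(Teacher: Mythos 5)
Your parts (i) and (iii) are essentially correct and are in the same spirit as the paper's own argument, which is only a two-line sketch: it delegates the halting and iteration bound to a ``straightforward modification'' of Theorem~\ref{Thm:wspSolverComplexity}, records $|\Gamma_0| = \Fnorm{w} + L(H)$ and $|\CC^i(\Gamma_0)| \le |\Gamma_0|\cdot L(R)^i$, and cites Touikan for near-linear folding. Your part (ii) pursues the right idea --- absorb an optimal generalized van Kampen diagram into $\Gamma_i$ one dual layer per iteration --- but the inductive invariant you chose is false. There is in general no label-preserving morphism $\phi_0 \colon D^{(0)} \to \Gamma_0$: a vertex of $\partial c_{\text{out}}$ may be visited several times by the boundary walk of $D$, and those visits correspond to \emph{distinct} vertices of the segment $\Gamma(w)$. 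Concretely, if $D$ is a wedge of two $R$-cells at the base vertex and $w = r_1 r_2$, the base vertex of $D$ carries four incident edge-ends while $v_0$ in $\Gamma_0$ carries one, so no such morphism exists; the natural label-preserving map goes the other way, from $\Gamma(w)$ \emph{onto} the boundary walk of $D$. The obstruction persists for $i \ge 1$, and even at $i = d$ the planar diagram $D$ may identify boundary positions that the folded graph $\Gamma_d$ keeps separate, so not even the final morphism is guaranteed. The standard repair is to run the induction by peeling rather than by mapping subdiagrams in: let $D_j$ be $D$ with the $j$ outermost dual layers of cells deleted, and prove that the boundary label of $D_j$ reads a path from $v_0$ to $v_k$ in $\Gamma_j$. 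Each cell removed at stage $j+1$ meets $\partial D_j$, so $\CC$ plants a loop reading its boundary label at a vertex of that path and folding glues the loop along the old boundary, exposing the other side of the cell as an alternative path; the $H$-cells all have $v_0$ on their boundary, hence lie at dual distance $1$, and their loops are already present at $v_0$ in $\Gamma_0$. After $\delta_H(w)$ layers no cells remain, the boundary label of a cell-free diagram freely reduces to $\varepsilon$, and since $\Gamma_{\delta_H(w)}$ is folded this forces $v_0 = v_k$.

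Your closing remark in part (ii) is also off the mark: the theorem asserts only that the algorithm halts in \emph{at most} $\delta_H(w)$ iterations, so an early identification of $v_0$ with $v_k$ under folding is harmless --- the algorithm simply stops sooner --- and nothing needs to be ruled out. There is no claim that the bound is attained, and in general it is not; trying to prove exactness here would lead you astray.
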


\begin{proof}
Algorithm \ref{A:DecisionMSP} is a generalization of Algorithm~$\CA_{WP}$ (\cite[Algorithm $\CA$]{MSU_book:2011}).
It is straightforward to modify Theorem~\ref{Thm:wspSolverComplexity} (\cite[Theorem~16.4.3]{MSU_book:2011}) and see that
Algorithm~$\CA_{MP}$ indeed halts in at most $\delta_H(w)$ iterations.
It is easy to see that $\Gamma_i = S(\CC^{i}(\Gamma_0))$ and:
    $$|\Gamma_0| = \Fnorm{w}+\sum_{i=1}^k \Fnorm{h_i} \ \ \text{ and } \ \ |\CC^{i}(\Gamma_0)| \le |\Gamma_0| \cdot L(R)^i.$$
Since folding can be done in nearly linear time (see \cite{Touikan:2006})
Algorithm \ref{A:DecisionMSP} has the claimed time complexity.
\end{proof}


\section{Proof of the main theorems}\label{Sec:diagramsAndTrees}

In this section we prove that the challengers (Algorithms~\ref{A:esp_generator},~\ref{A:csp_generator},~\ref{A:msp_generator0},~\ref{A:msp_generator}) generically
have at most polynomial time advantage over the solvers $\CA_{WP}$, $\CA_{CP}$, and $\CA_{MP}$.

\subsection{Word search problem}
\label{sub:wspAnalysis}

Here we investigate challenges produced by Algorithm~\ref{A:esp_generator} on the fixed input
$(X;R)$ and $w=\varepsilon$.
As discussed in Section \ref{Sub:wsp_generator}, Algorithm~\ref{A:esp_generator} defines a sequence of probability measures $\{\mu_{n}\}$ on $F(X)$.
Our goal is to show that for some fixed constant $C$:
\[
\mu_{n} \Set{w' \in \FreeGroup{X}}{\delta(w') \le C \ln n } \to 1 \text{ as } n\to \infty.
\]

To construct a word $w'$ Algorithm~\ref{A:esp_generator} generates a sequence of intermediate words $\varepsilon = w_0$, $w_1$, $\dots$, $w_n$
(with $\overline{w}_n = w'$).
Each $w_i$ is obtained from $w_{i-1}$ by insertion of a word $u_i\in\ElementaryIdentities$ at the position $p_i$.
The sequence $w_0$, \dots, $w_n$ defines the sequence of van Kampen diagrams $D_0$, $D_1$, \dots, $D_n$ in a natural way as follows.
The diagram $D_0$ is the trivial diagram consisting of a single vertex.
Define a set of building blocks for van Kampen diagrams over $\gpr{X}{R}$:
\begin{gather*}
	\DiagElementaryIdentities = \Set{\text{a free edge with label $x$}}{x \in X^{\pm}} \cup
	\Set{\text{a cell with label $r$}}{r\in R}.
\end{gather*}
For $i = 1, \dots, n$ the diagram $D_{i}$
is constructed from $D_{i-1}$ by attaching $t_i \in \DiagElementaryIdentities$ labeled with $u_i$ to a vertex $\xi_i \in V(D_{i-1})$ corresponding to the position $p_i$ in $w_{i-1} = \mu(D_{i-1})$.
We call the vertex $\xi_i$ the \emph{active vertex on iteration $i$}.
The distribution $\DRandom{\ElementaryIdentities}$ of $u_i$ induces the distribution $\DRandom{\DiagElementaryIdentities}$ of $t_i$.

By construction, the diagrams have a tree-like structure and we can further define a sequence of nested trees
$T_i = (V_i, E_i)$ with $V_i = V(D_i)$ and $v_1 \to v_2 \in E_i$ if $v_1$
is an active vertex on iteration~$j$ and $v_2 \in V(D_j) \setminus V(D_{j-1})$ for some $j \le i$.
It is easy to check that $T_i$ is a tree.
This way Algorithm \ref{A:esp_generator} induces a discrete random (branching) process $\{T_i\}$ generating trees which plays a crucial role in our investigation of the properties of random identities.

It will be convenient for us to describe the process $\{T_i\}$ explicitly, avoiding words $w_i$ and diagrams $D_i$.
For each vertex $v \in V_i$ we define a number $\weight{i}(v) \in \MN$ called the \emph{weight}
of $v$ in $T_i$. The weight of the $k$th level of $T_i$ for $k \ge 0$ is:
\[
	\weight{i}_k = \sum_{d(v, v_0) = k} \weight{i}(v).	
\]
The \emph{total weight} of $T_i$ is $\Tweight{i} = \sum_{v \in V_i} \weight{i}(v)$.
The upper indices here emphasize that we consider weights for the tree $T_i$.
The next lemma shows how the sequence of trees $\{T_i\}$ evolves.
\begin{lemma}
\label{Lemma:TreeDistr}
The tree $T_0$ consists of a single vertex $v_0$ with $\weight{0}(v_0) = 1$.
For $i = 1, \dots, n$ the tree $T_{i}$ is constructed from $T_{i-1}$
by adding $\eta_i \gets \LDRandom{\ElementaryIdentities}- 1$ new children to
a random vertex $\xi_i\in V_{i - 1}$ distributed as:
	\begin{equation}
	\label{Eq:treeNodeDistr}
		\Prob{\xi_i = v} = \frac{\weight{i - 1}(v)}{\Tweight{i - 1}} \mbox{ for } v \in V_{i - 1}.
	\end{equation}
	The weight of a vertex $u \in V_{i}$ in $T_i$ satisfies the following relation:
	\begin{equation}
	\label{Eq:weightRecurrentFormula}
		\weight{i}(u) =
		\begin{cases}
			1,				  	   &\text{if $u \in V_{i} \setminus V_{i-1}$,}\\
			\weight{i - 1}(u) + \Indicator{u = \xi_i}, 	   &\text{otherwise,}
		\end{cases}
	\end{equation}
	where $\Indicator{\cdot}$ is the indicator function.
\end{lemma}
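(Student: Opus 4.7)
The plan is to prove the lemma by induction on $i$, maintaining the following invariant: for every $i \ge 0$ and every $v \in V_i$, the weight $\weight{i}{v}$ equals the number of positions in the boundary walk of $D_i$ that correspond to the vertex $v$; equivalently, $\Tweight{i} = \Fnorm{w_i} + 1$, the number of insertion positions of $w_i = \mu(D_i)$.

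The base case is trivial: $D_0$ consists of a single vertex $v_0$, the word $w_0 = \varepsilon$ has exactly one boundary position, and $\weight{0}{v_0} = 1 = \Tweight{0}$. For the inductive step, assume the invariant holds for $T_{i-1}$ and $D_{i-1}$. Algorithm~\ref{A:esp_generator} samples $p_i \gets \URandom{\Fnorm{w_{i-1}}}$ uniformly from $\Fnorm{w_{i-1}} + 1 = \Tweight{i-1}$ positions; each such position is located at a unique vertex $\xi_i \in V(D_{i-1}) = V_{i-1}$, and by the invariant exactly $\weight{i-1}{v}$ positions are located at $v$. Hence
\[
\Prob{\xi_i = v} = \frac{\weight{i-1}{v}}{\Tweight{i-1}},
\]
which is exactly~\eqref{Eq:treeNodeDistr}.

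It remains to verify the weight update rule and the distribution of $\eta_i$. The sampled word $u_i \gets \DRandom{\ElementaryIdentities}$ determines a building block $t_i \in \DiagElementaryIdentities$: a free edge (of length $2$ boundary label) if $u_i \in \{xx^{-1}, x^{-1}x\}$, or a disk with boundary label $r$ (of length $\Fnorm{r}$) if $u_i = r \in R^\star$. Attaching $t_i$ at $\xi_i$ introduces exactly $\Fnorm{u_i} - 1$ new vertices (the free endpoint for an edge, or the $\Fnorm{r} - 1$ fresh boundary vertices of the cell), so $\eta_i = \Fnorm{u_i} - 1$ has distribution $\LDRandom{\ElementaryIdentities} - 1$ as claimed. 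Reading the boundary of $D_i$ starting from the base vertex now differs from that of $D_{i-1}$ only locally at $\xi_i$: the walk makes an additional detour around $\partial t_i$, visiting each new vertex exactly once and returning once more to $\xi_i$. Thus each new vertex receives weight $1$, the weight of $\xi_i$ increases by $1$, and all other weights are unchanged, establishing~\eqref{Eq:weightRecurrentFormula} and preserving the invariant.

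The only subtlety, and the step that deserves the most careful attention, is the bijection between boundary positions of the van Kampen diagram $D_i$ and the weighted vertex visits tracked by the tree $T_i$: namely, that because each $t_j$ is glued to $D_{j-1}$ at a single vertex, $D_i$ has a tree-of-blocks structure and its boundary walk is a DFS-style traversal in which a vertex $v$ appears a number of times equal to one plus the number of blocks attached at $v$ in later steps. Once this correspondence is verified, the rest of the argument reduces to the elementary counting above.
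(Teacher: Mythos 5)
Your proof is correct and follows essentially the same route as the paper's: interpret $\weight{i}(v)$ as the number of boundary positions of $w_i=\mu(D_i)$ located at $v$, observe that uniform choice of position then gives \eqref{Eq:treeNodeDistr}, and check that attaching a block with boundary label $u_i$ creates $\FMnorm{u_i}-1$ new weight-one vertices while incrementing the weight of $\xi_i$ by one. The paper's version is terser (it leaves the induction and the boundary-walk bookkeeping implicit), but there is no substantive difference in the argument.
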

\begin{proof}
Since $t_i \gets \DRandom{\DiagElementaryIdentities}$ the number of
vertices in $t_i$ is $\eta_i \gets \LDRandom{\ElementaryIdentities}$.
Attaching $t_i$ at $\xi_i$ adds $\eta_i - 1$ new children to $\xi_i$.

Each vertex $v \in V_{i-1}$ lies on $\partial D_{i-1}$ and, therefore, corresponds to a position (possibly more than one)
of the boundary word $w_{i-1}=\mu(D_{i-1})$.
We interpret the vertex weight $\weight{i-1}(u)$ as the number of positions in $w_{i-1}=\mu(D_{i-1})$ corresponding to the vertex $u$.
Clearly, $\weight{0}(v_0) = 1$ because $V_0 = \{ v_0 \}$ and there is only one position in $w_0 = \varepsilon$.
Since each position in $\mu(D_{i-1})$ is equally likely to be chosen,
the probability of $v$ to be chosen is proportional to its weight, which gives~\eqref{Eq:treeNodeDistr}.
An attachment of a new edge or a cell increases the weight of $\xi_i$ and sets the weights for the new vertices to $1$, proving~\eqref{Eq:weightRecurrentFormula}.
\end{proof}

By $\treeHeight{T_n}$ we denote the {\em height} of the tree $T_n$ (the maximal distance from the root to a vertex).
\begin{lemma}
\label{L:treeHeightToDepth}
Let $\varepsilon = w_0$, \dots, $w_n$ be a sequence of words generated by Algorithm~\ref{A:esp_generator}, $w' = \Reduced{w}_n$, and $D_0$, \dots, $D_n$ the sequence of the corresponding diagrams.
Let $D$ be a van Kampen diagram obtained by folding the boundary of $D_n$. Then
$\delta(w') \le \delta (D) \le \treeHeight{T_n}$.
\end{lemma}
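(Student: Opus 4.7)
The first inequality $\delta(w') \le \delta(D)$ is immediate from the definitions: $D$ is a van Kampen diagram with boundary label $\mu(D) = \Reduced{w_n} = w'$, and $\delta(w')$ is by definition the infimum of $\delta(D')$ over all van Kampen diagrams $D'$ with boundary label $w'$.

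For the bound $\delta(D) \le \treeHeight{T_n}$ the plan is, for each cell $c \in C(D)$, to exhibit an explicit path in $D^\ast$ from $c$ to $c_{\text{out}}$ of length at most $\treeHeight{T_n}$. Let $\phi \colon D_n \to D$ denote the map induced by folding. Every cell of $D$ is the image of some cell of $D_n$, and every cell of $D_n$ arose as a unique cell-block $t_i$ attached at iteration $i$ to the active vertex $\xi_i$; by Lemma~\ref{Lemma:TreeDistr} the non-$\xi_i$ vertices on $\partial t_i$ are precisely the children of $\xi_i$ introduced at iteration $i$ in $T_n$, so $d_{T_n}(\xi_i) + 1 \le \treeHeight{T_n}$.

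Given $c \in C(D)$, pick a preimage $c' \in C(D_n)$ with attachment vertex $\xi = \xi_i$, and walk up $T_n$ to the root along $\xi = u_d, u_{d-1}, \ldots, u_0 = v_0$, where $d = d_{T_n}(\xi) \le \treeHeight{T_n} - 1$. Each edge $u_{j-1} \to u_j$ of $T_n$ corresponds to a block $b_j$ attached at $u_{j-1}$ which created $u_j$; the idea is to use the images of the cell-blocks among $b_d, b_{d-1}, \ldots, b_1$ (in order) as the intermediate vertices of a dual path from $c$ to $c_{\text{out}}$, skipping over free-edge blocks. The key observation is that whenever $b_j$ is a cell, $\phi(b_j) \in C(D)$ contains both $\phi(u_{j-1})$ and $\phi(u_j)$ on its boundary, so consecutive cell-block images share $\phi(u_{j-1})$ and are dual-adjacent; moreover $v_0 \in \partial c_{\text{out}}$ by choice of base vertex, so the last cell in the chain is dual-adjacent to $c_{\text{out}}$. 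The resulting dual path has at most $d + 1 \le \treeHeight{T_n}$ edges, which is what we want.

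The main obstacle is justifying the skip over free-edge blocks. For a free $b_j$ no cell is available to serve as an intermediate, so I must argue that the chain can bypass it without breaking dual-adjacency. This will rely on the planar tree-like construction of $D_n$: each block is wedged at a single vertex, so before folding every block lies on $\partial c_{\text{out}}$, and folding only identifies inverse pairs of boundary edges at a common vertex. A free-edge $b_j$ contributes an $xx^{-1}$-fragment to the boundary walk which either survives in $D$ as a pendant segment with both $\phi(u_{j-1})$ and $\phi(u_j)$ still on $\partial c_{\text{out}}$, or is collapsed by folding so that $\phi(u_{j-1}) = \phi(u_j)$; in either situation the inductive chain passes through the free-edge step with no loss. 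Verifying that these two cases are exhaustive — and in particular that no intermediate $\phi(u_j)$ can be driven into the interior of $D$ in a way that severs the chain without a corresponding cell-block bypass being available — is the main technical point of the argument.
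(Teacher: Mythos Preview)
Your first inequality and the overall strategy for the second --- exhibit a dual path from each cell to $c_{\text{out}}$ by walking up $T_n$ from the attachment vertex to the root --- are correct, and considerably more explicit than the paper's own proof, which is a two-line sketch: it records that $v_0$ is not folded into the interior and then simply asserts $\delta(D)\le h(T_n)$ without further justification.

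The gap is your handling of a free-edge block $b_j$. The dichotomy you propose (either $b_j$ survives as a pendant with both endpoints on $\partial c_{\text{out}}$, or it is ``collapsed'' with $\phi(u_{j-1})=\phi(u_j)$) is not exhaustive: the typical third possibility is that folding identifies $b_j$ with an edge of some cell, in which case neither alternative holds. Fortunately this third case is exactly the ``cell-block bypass'' you anticipate in your last sentence, and it makes the argument \emph{easier}, not harder. The clean statement that replaces your case split is: at each stage, if $\phi(u_j)\in\partial c_{\text{out}}$ then the current cell in the chain is already dual-adjacent to $c_{\text{out}}$ and you stop; otherwise $\phi(u_j)$ is an interior vertex of $D$, so every face meeting $\phi(u_j)$ is a bounded cell, and in particular the edge $\phi(b_j)$ --- regardless of whether $b_j$ was a cell or a free edge --- lies on the boundary of some bounded cell $c^{(j)}$, which therefore contains both $\phi(u_j)$ and $\phi(u_{j-1})$. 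Take $c^{(j)}$ as the next dual-path vertex and continue. This yields a dual path $c,c^{(d)},c^{(d-1)},\ldots,c^{(1)},c_{\text{out}}$ of length at most $d+1\le h(T_n)$, with no separate free-edge analysis required.
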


\begin{proof}
Since $\mu(D) = w'$ we have $\delta(w') \le \delta (D)$.
Folding the boundary of $D_n$ we do not fold the base vertex $v_0$ inside.
Hence,
$\delta(D) = \max_{c \in C(D)} d^\ast(c, c_{\text{out}}) \le \treeHeight{T_n}$.
\end{proof}

Denote the cumulative distribution function of $h(T_n)$, which we use throughout Section~\ref{Sec:diagramsAndTrees}, by $\CF_n(x)$:
\[
	\CF_n(x) = \Prob{h(T_n) \le x}.
\]
\begin{proposition}\label{pr:h_bound}
There exists a constant $C < e^2$ depending on $\DRandom{\ElementaryIdentities}$ such that:
$$\CF_n(C\ln n) \to 1 \text{ as } n \to \infty.$$
\end{proposition}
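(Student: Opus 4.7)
The plan is to interpret $\{T_i\}$ from Lemma~\ref{Lemma:TreeDistr} as a sampled Crump-Mode-Jagers (CMJ) branching process and then invoke the classical height asymptotics for such processes, as reviewed in the appendix. I would attach to each vertex $v$ an independent Yule-type clock: when $v$ has current weight $k+1$ it waits an $\text{Exp}(k+1)$ time until its next firing, at which a fresh $\eta \sim \LDRandom{\ElementaryIdentities}$ is drawn, $\eta - 1$ children are attached to $v$, and $v$'s weight is incremented by one. At any instant the aggregate firing rate equals the total tree weight $\Gamma$, so by the standard race-of-exponentials argument the vertex that fires next is distributed as in~\eqref{Eq:treeNodeDistr}, and the continuous-time tree observed at the time $\tau_n$ of the $n$-th firing has the same law as $T_n$. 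In this embedding the weight of a fixed vertex, as a function of its own age, is just a Yule process started at one.

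Next I would identify the Malthusian parameter $\alpha$. Since the weight process is a Yule process started at one, its reproduction point process has mean intensity $\ME{\eta-1}\,e^t\,dt$, and the Malthus equation $\int_0^\infty e^{-\alpha t}\mu(dt) = 1$ has unique positive root $\alpha = \ME{\eta}$. By Nerman's strong law the $n$-th firing occurs at time $\tau_n = (1 + o(1))\,\alpha^{-1}\ln n$ in probability. The classical Biggins-Kingman asymptotics for the maximum generation in a supercritical CMJ process yield $\treeHeight{\widetilde T_t}/t \to \rho$ in probability, where $\rho$ is characterized by the variational formula $\inf_{\theta > \alpha}\rb{\theta/\rho + \ln \hat\mu(\theta)} = 0$. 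Composing the two scalings gives $\treeHeight{T_n}/\ln n \to \rho/\alpha$ in probability, so any constant $C > \rho/\alpha$ satisfies the conclusion, and by Lemma~\ref{L:treeHeightToDepth} the same $C$ controls $\delta(w')$.

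It remains to verify the uniform bound $\rho/\alpha < e^2$. Substituting the explicit form $\hat\mu(\theta) = \ME{\eta - 1}/(\theta - 1)$ and optimizing in $\theta$ (the critical point is $\theta^\star = 1 + \rho$) reduces the defining equation to the closed form $\rho\, e^{-1/\rho} = e\,\ME{\eta - 1}$. Combined with $\alpha = 1 + \ME{\eta - 1}$, an elementary analysis of this implicit relation shows that $\rho/\alpha$ tends to $e$ as $\ME{\eta - 1}\to\infty$ and to $0$ as $\ME{\eta - 1}\to 0$, and in fact stays strictly below $e$ throughout; in particular $\rho/\alpha < e < e^2$ for every admissible $\DRandom{\ElementaryIdentities}$, and one may take $C = (\rho/\alpha) + \varepsilon$ for sufficiently small $\varepsilon > 0$.

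The main obstacle is the careful setup of the CMJ embedding: although the mechanism in which a vertex's own weight increases at each of its firings is slightly nonstandard in appearance, it is the standard Yule process in disguise, and one must verify that the reproduction point processes of distinct vertices are mutually independent and that the time-change from continuous to discrete indexing does not distort the law of $T_n$. Once the embedding and independence are in place, both the identification of $\alpha$ and the height asymptotic are immediate from classical CMJ theory, and the final inequality $\rho/\alpha < e^2$ is a one-variable calculus exercise.
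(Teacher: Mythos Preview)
Your approach is essentially the paper's: the appendix constructs exactly the CMJ embedding you describe (each vertex runs a Yule clock, producing $\nu\sim\LDRandom{\ElementaryIdentities}-1$ children at each ring), verifies via the exponential race that the embedded chain at the $n$th firing has the law of $T_n$, and then combines Kingman's first-birth asymptotics with Biggins' growth theorem to get $h(T_n)/\ln n\to 1/(\alpha\gamma)$ almost surely, with $m(\theta)=\AvgM/(\theta-1)$ and $\alpha=\AvgM+1$.  Your $\rho$ is precisely $1/\gamma$, and your implicit equation $\rho e^{-1/\rho}=e\,\AvgM$ is equivalent to the paper's $\gamma e^{\gamma+1}=1/\AvgM$.

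One remark: your final estimate $\rho/\alpha<e$ is sharper than the paper's $1/(\alpha\gamma)<e^2$, and it is correct---writing $\alpha\gamma=(\AvgM+1)\gamma=e^{-\gamma-1}+\gamma$ and noting this function is strictly increasing with value $e^{-1}$ at $\gamma=0$ gives $\alpha\gamma>e^{-1}$.  Minor quibbles: the infimum defining $\rho$ should range over $\theta>1$ (where $\hat\mu(\theta)<\infty$) rather than $\theta>\alpha$, though the minimizer $\theta^\star=1+\rho$ lies in both regions so the answer is unaffected; and the paper obtains almost-sure convergence (via Biggins rather than Nerman), which is slightly more than the convergence in probability you invoke, though either suffices for the proposition.
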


\begin{proof}
The random process $\{T_n\}_{n \ge 0}$ is a particular CMJ process described in Section~\ref{SubS:particularCMJ} (see Corollary~\ref{Cor:particularCMJ}).
Therefore, by Theorem~\ref{Thm:cmjTreeHeightLimite} there exists a constant $C < e^2$ satisfying:
\[
	\Prob{\lim_{n \to \infty} \frac{\treeHeight{T_n}}{\ln n} = C} = 1,
\]
which implies that for any $\varepsilon > 0$ we have:
\[
	\CF_n((C + \varepsilon) \ln n) = \Prob{h(T_n) \le (C + \varepsilon) \ln n)} \underset{n \to \infty}{\longrightarrow} 1.
\]
\end{proof}
Fix the constant $C$ defined in Proposition~\ref{pr:h_bound} for the rest of Section~\ref{Sec:diagramsAndTrees}.
\begin{theorem}
\label{Thm:IdDepthBound}
Let $\{\mu_{n}\}$ be the system of probability measures on $F(X)$
defined by Algorithm~\ref{A:esp_generator} for a fixed group presentation $\gpr{X}{R}$
and a word $w=\varepsilon$. Then:
\[
\mu_{n} \Set{w' \in \FreeGroup{X}}{\delta(w') \le C \ln n } \to 1 \text{ as } n\to \infty.
\]
\end{theorem}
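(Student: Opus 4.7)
The proof plan is essentially to assemble the two ingredients already established: a deterministic inequality relating the depth of the output word to the height of the associated tree, and a probabilistic statement about that height. Once these are in hand, the theorem follows by a single monotonicity-of-measure step, so the main work has in fact been done in Lemma~\ref{L:treeHeightToDepth} and Proposition~\ref{pr:h_bound}.

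Concretely, I would first unwind the definition of $\mu_n$. A sample $w' \sim \mu_n$ arises as $\Reduced{w_n}$ for a sequence $\varepsilon = w_0, w_1, \ldots, w_n$ produced by Algorithm~\ref{A:esp_generator}, and this sequence canonically determines the diagrams $D_0, \ldots, D_n$ and the tree $T_n$ described in Lemma~\ref{Lemma:TreeDistr}. By Lemma~\ref{L:treeHeightToDepth}, on every sample path we have the deterministic bound
\[
\delta(w') \le \treeHeight{T_n}.
\]
Therefore the event $\{\treeHeight{T_n} \le C \ln n\}$ is contained in the event $\{\delta(w') \le C \ln n\}$, so
\[
\mu_n \bigl\{ w' \in \FreeGroup{X} : \delta(w') \le C \ln n \bigr\} \;\ge\; \Prob{\treeHeight{T_n} \le C \ln n} \;=\; \CF_n(C \ln n).
\]

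Second, I would invoke Proposition~\ref{pr:h_bound} directly: the constant $C < e^2$ was chosen precisely so that $\CF_n(C \ln n) \to 1$ as $n \to \infty$. Combining this with the inequality above, and using that $\mu_n$ is a probability measure (so the left-hand side is at most $1$), we obtain
\[
\mu_n \bigl\{ w' \in \FreeGroup{X} : \delta(w') \le C \ln n \bigr\} \underset{n \to \infty}{\longrightarrow} 1,
\]
which is the desired statement.

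There is essentially no obstacle remaining at this point — all of the nontrivial content (the coupling of the word generation with a Crump–Mode–Jagers branching process, and the asymptotic analysis of its height) has been packaged into Lemma~\ref{Lemma:TreeDistr}, Lemma~\ref{L:treeHeightToDepth}, and Proposition~\ref{pr:h_bound}. The only subtle point worth a sentence in the write-up is that the bound $\delta(w') \le h(T_n)$ holds pathwise (not merely in expectation), which is what lets us pass from the tree-height tail bound to the required statement about $\mu_n$ without any extra argument.
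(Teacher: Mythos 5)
Your proof is correct and follows exactly the paper's argument: the pathwise bound $\delta(w') \le \treeHeight{T_n}$ from Lemma~\ref{L:treeHeightToDepth} gives $\mu_n\Set{w'}{\delta(w') \le C\ln n} \ge \CF_n(C\ln n)$, and Proposition~\ref{pr:h_bound} finishes the job. No differences worth noting.
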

\begin{proof}
By Lemma~\ref{L:treeHeightToDepth}:
\[
	\mu_{n} \Set{w' \in \FreeGroup{X}}{\delta(w') \le x} \ge \CF_n(x) \text{ for } x \in \MN.
\]
The rest follows from Proposition~\ref{pr:h_bound}.
\end{proof}
\begin{thma}{\textbf{A.}}
\emph{For any finite presentation $\gpr{X}{R}$ Algorithm $\CA_{WP}$
solves the randomized problem $(\WSP, \{\mu_n\}_{n \ge 0})$ defined by Algorithm~\ref{A:esp_generator} generically in polynomial time $\tilde{O}\rb{n^{1 + e^2 \ln L(R)}}$.
}
\end{thma}
\begin{proof}
By Theorem~\ref{Thm:wspSolverComplexity} the solver time complexity on the output $w'$ of Algorithm~\ref{A:esp_generator} is bounded by:
\[
	\tilde{O}\rb{ \Fnorm{w'} L(R)^{\delta(w')}}.
\]
We can bound $\Fnorm{w'}$ by $n \max_{r \in R} \FMnorm{r}$. By Theorem~\ref{Thm:IdDepthBound} and the fact that $C < e^2$ the generic time complexity of $\CA_{WP}$ on $w'$ is bounded by:
\[
	\tilde{O}\rb{n \max_{r \in R} \FMnorm{r} L(R)^{e^2 \ln n}} = \tilde{O}\rb{n^{1 + e^2 \ln L(R)}}.
\]
\end{proof}

\subsection{Equivalence search problem}
\label{sub:espAnalysis}

In general, Algorithm~\ref{A:esp_generator} produces words equivalent to the input $w$ in $G = \gpr{X}{R}$.
In this section we show that this general case is not harder than the case with $w=\varepsilon$ considered above
and that similar complexity bounds hold.

For a given word $w\in F(X)$ and $n\in\MN$ Algorithm~\ref{A:esp_generator} produces a sequence of words $w=w_0,\ldots,w_n$
and outputs $w' = \overline{w}_n$, which, as in the previous section,  naturally defines
a sequence of van Kampen diagrams $\{D_i\}$ with
$D_0$ is as in Figure~\ref{fi:Gw_graph} (the line segment with label $w$) and
$D_i$ is obtained from $D_{i-1}$ as described in Section \ref{sub:wspAnalysis} (see Figure~\ref{fig:esp_diagram}).
By construction, $\mu(D_i) = w \circ w_i^{-1}$.
The system of nested graphs $\{T_i\}$ corresponding to $\{w_i\}$ is defined in a similar way as the trees in Section~\ref{sub:wspAnalysis}.
Each $T_i$ is a disjoint union of $|w|+1$ rooted trees (\emph{forest}) $R_0, \dots, R_{|w|}$ with the set of roots $V_0 = V(D_0) = \left\{v_0, \dots, v_{\FMnorm{w}}\right\}$. The height of $T_i$ is:
\[
h(T_i) = \max_{0\le j\le |w|} h(R_j).
\]
\begin{figure}
    \centering
	\includegraphics[scale=0.6]{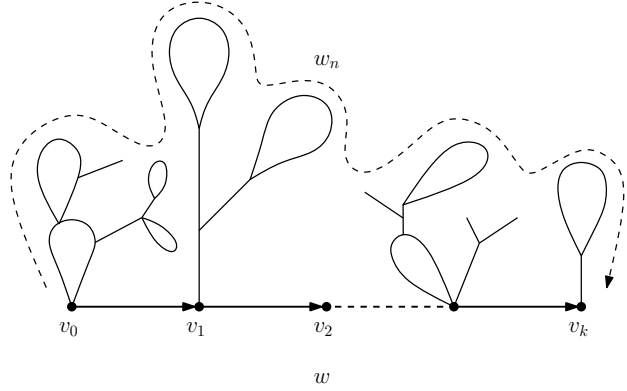}
	\caption{A diagram $D_n$.}
    \label{fig:esp_diagram}
\end{figure}

\begin{lemma}
\label{lemma:forestHeightToDepth}
Let $D$ be a van Kampen diagram obtained by folding the part of $\partial D_n$ labeled with $w_n^{-1}$ (see Figure~\ref{fig:esp_diagram}). Then $\mu(D) = w \circ w'^{-1}$ and:
\[
\delta(w \circ w'^{-1}) \le \delta(D) \le h(T_n).
\]
\end{lemma}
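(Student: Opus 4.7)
The proof splits into three parts. First, $\mu(D) = w \circ w'^{-1}$ follows by direct inspection of the folding: the boundary of $D_n$, read starting at $v_0$, equals the concatenation $w \circ w_n^{-1}$, and folding only the $w_n^{-1}$-arc is precisely free reduction of $w_n$ to $w'$ on that sub-arc while leaving the $w$-arc untouched. The inequality $\delta(w \circ w'^{-1}) \le \delta(D)$ is then immediate from the definition of $\delta$ as an infimum over van Kampen diagrams with the prescribed boundary label, because $D$ is one such diagram.

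The content of the lemma is the inequality $\delta(D) \le h(T_n)$, which I would prove by directly generalizing the one-line argument of Lemma~\ref{L:treeHeightToDepth} to the multi-root case. The key structural fact is that the $\FMnorm{w}+1$ vertices of $D_0$---namely the roots of the forest $T_n$---all lie on the $w$-arc of $\partial D_n$, and since the folding touches only the $w_n^{-1}$-arc, every root remains on $\partial c_{\text{out}}$ in $D$. I would then induct on depth in the forest: for a cell $c$ introduced at iteration $i$ by attaching a building block at a vertex $\xi_i$ of depth $d$ in $T_{i-1}$, I show $d^\ast(c, c_{\text{out}}) \le d+1$. The base case $d=0$ is immediate because $\xi_i$ is a root, hence $\xi_i \in \partial c \cap \partial c_{\text{out}}$. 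For $d \ge 1$, the vertex $\xi_i$ was itself introduced at an earlier iteration as part of a block attached at its forest-parent $\xi'$ of depth $d-1$, and the ancestor cell attached at $\xi'$ shares the vertex $\xi_i$ with $c$, so the induction hypothesis gives $d^\ast(c, c_{\text{out}}) \le d+1 \le h(T_n)$; taking the maximum over all cells of $D$ yields the claim.

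The main technical subtlety, and what I expect to be the only nontrivial step, is the case of the inductive argument in which the block attached at the forest-parent $\xi'$ is a free edge rather than a cell: then there is no ancestor cell available to prepend to the dual path. I would resolve this by a short case analysis. Either the free edge cancels during the folding of the $w_n^{-1}$-arc, in which case $\xi_i$ is identified with $\xi'$ in $D$ and the inductive step for $c$ reduces to the hypothesis at depth $d-1$; or the free edge persists in $D$, in which case both of its sides border $c_{\text{out}}$, so its endpoint $\xi_i$ still lies on $\partial c_{\text{out}}$ and $c$ is directly dual-adjacent to $c_{\text{out}}$ via $\xi_i$. In either subcase the bound $d^\ast(c, c_{\text{out}}) \le d+1$ is preserved, completing the proof.
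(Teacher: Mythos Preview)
Your overall strategy---induction on the tree depth of the attachment vertex, together with the observation that the roots survive on $\partial c_{\text{out}}$ because only the $w_n^{-1}$-arc is folded---is exactly the intended unpacking of the paper's one-line ``similar to Lemma~\ref{L:treeHeightToDepth}''. The first two parts of your argument are fine.

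The flaw is in your free-edge case analysis. Your dichotomy is: either the free edge $e:\xi'\text{--}\xi_i$ \emph{cancels}, forcing $\xi_i$ to be identified with $\xi'$, or it \emph{persists}, forcing both sides of $e$ to border $c_{\text{out}}$. Neither implication is correct. Concretely, take $w=\varepsilon$, insert a free edge $v_0\text{--}u_1$ labelled $x$, then attach at $u_1$ two cells $c_1,c_2$ with labels $r_1=x^{-1}a$ and $r_2=bx$ (for suitable relators), so that $w_3=x\,r_1\,r_2\,x^{-1}$ freely reduces by cancelling both boundary traversals of the free edge against the first edge of $c_1$ and the last edge of $c_2$. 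In the folded diagram $D$ the free edge is now an interior edge with $c_1$ on one side and $c_2$ on the other; $u_1$ is \emph{not} identified with $v_0$, and $u_1\notin\partial c_{\text{out}}$. So both branches of your case split fail simultaneously. (The lemma itself survives: here $d^\ast(c_1,c_{\text{out}})=1$ because $v_0\in\partial c_1$ after folding.)

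The repair is to change what you induct on. Prove instead: for every vertex $u$ at depth $d$ in $T_n$ there is a face $f_u$ (a cell or $c_{\text{out}}$) with $u\in\partial f_u$ and $d^\ast(f_u,c_{\text{out}})\le d$. When $u$ is created by a cell, take $f_u$ to be that cell; when $u$ is created by a free edge $e$, look at the two faces flanking $e$ in $D$: if one is $c_{\text{out}}$ take $f_u=c_{\text{out}}$, and if both are cells then either one contains both $\xi'$ and $u$, so it is dual-adjacent to $f_{\xi'}$ and the inductive bound goes through. Applying this to the attachment vertex $\xi$ of a cell $c$ gives $d^\ast(c,c_{\text{out}})\le d+1\le h(T_n)$ as required.
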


\begin{proof}
Similar to the proof of Lemma \ref{L:treeHeightToDepth}.
\end{proof}

It is easy to see that Formulas~\eqref{Eq:treeNodeDistr} and~\eqref{Eq:weightRecurrentFormula} hold for the sequence of forests $\{T_i\}$. Define the \emph{cumulative weight up to $k$th level of $T_i$} as $\weight{i}_{<k} = \sum_{m < k} \weight{i}_m$, and the level-weights tuple as:
\[
	\vweight{i} = \tuple{\weight{i}_0, \dots, \weight{i}_{h(T_i)} }.
\]
Set $\lp{i}_m = \frac{\weight{i}_m}{\Tweight{i}}$ which is the probability to pick a vertex from the level $m$ on the $(i+1)$-st generation step.

\begin{proposition}\label{pr:esp_h_bound}
$\Prob{h(T_n) \le x} \ge \CF_n(x) \text{ for any } x \in \MN$, where $\CF_n(x)$ is defined in Section~\ref{sub:wspAnalysis}.
\end{proposition}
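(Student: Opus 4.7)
The plan is to construct a coupling between the single-rooted WSP process $\{T_n^{WSP}\}$ of Section~\ref{sub:wspAnalysis} and the ESP forest process $\{T_n\}$ so that $h(T_n)\le h(T_n^{WSP})$ holds pointwise on the coupled probability space; taking probabilities then gives the claim.

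First I would reduce to the single-root case by collapsing the $\FMnorm{w}+1$ initial roots $v_0,\dots,v_{\FMnorm{w}}$ of $T_0$ into one vertex $v^\ast$ of weight $\FMnorm{w}+1$. Collapsing does not change the depth of any non-root vertex relative to a root, so the resulting single-rooted process $\tilde T_n$ satisfies $h(\tilde T_n)=h(T_n)$; by Lemma~\ref{Lemma:TreeDistr} it evolves exactly as the CMJ process of Section~\ref{sub:wspAnalysis}, only with initial root weight $\FMnorm{w}+1$ instead of $1$. The proposition then reduces to the following monotonicity fact: denoting by $T_n^{(m)}$ the single-rooted process of Lemma~\ref{Lemma:TreeDistr} started from a root of weight $m\ge 1$, one has $h(T_n^{(m)})$ stochastically dominated by $h(T_n^{(1)})$. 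Since $T_n^{WSP}$ is distributed as $T_n^{(1)}$ and $\tilde T_n$ as $T_n^{(\FMnorm{w}+1)}$, this yields
\[
\Prob{h(T_n)\le x} \;=\; \Prob{h(T_n^{(\FMnorm{w}+1)})\le x} \;\ge\; \Prob{h(T_n^{(1)})\le x} \;=\; \CF_n(x).
\]

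To prove the monotonicity I would couple $T_n^{(1)}$ and $T_n^{(m)}$ on a common probability space by sharing i.i.d.\ offspring counts $\eta_i\sim\LDRandom{\ElementaryIdentities}-1$ together with i.i.d.\ uniforms $U_i\in[0,1]$. At step $i$, in each process order the $\Tweight{i-1}$ boundary positions by non-decreasing depth of the associated vertex (breaking ties in any fixed way) and pick the position of rank $\lceil U_i\cdot\Tweight{i-1}\rceil$; the depth of the selected vertex is then the $U_i$-quantile of the current depth profile of the weight. The coupling is designed to preserve the inductive invariant that the cumulative depth distribution of $T^{(m)}$ dominates that of $T^{(1)}$, namely
\[
\sum_{d'\le d}\weight{i}_{d'}(T^{(m)})\Big/\Tweight{i}(T^{(m)}) \;\ge\; \sum_{d'\le d}\weight{i}_{d'}(T^{(1)})\Big/\Tweight{i}(T^{(1)}) \quad\text{for every } d\ge 0.
\]
Under this invariant, quantile coupling forces the picked vertex in $T^{(m)}$ to sit at a depth no larger than the picked vertex in $T^{(1)}$, and consequently $h(T_n^{(m)})\le h(T_n^{(1)})$ on the coupled space.

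The main obstacle is the inductive verification of the cumulative-depth invariant after a coupled step. The total weight of each process grows by the same amount $1+\eta_i$, while the new mass is injected at depths $d^{(m)},d^{(m)}+1$ in $T^{(m)}$ and at $d^{(1)},d^{(1)}+1$ in $T^{(1)}$ with $d^{(m)}\le d^{(1)}$ by the induction hypothesis. One must check that the normalized cumulative weights at every depth continue to favor $T^{(m)}$, which boils down to an algebraic comparison exploiting that $T^{(m)}$ carries an extra constant term $m-1$ in its total weight, so its depth-0 cumulative proportion starts strictly larger and the shallower placement of new mass never reverses the ordering at any subsequent level.
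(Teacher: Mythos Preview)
Your plan is essentially the paper's own argument, packaged slightly differently. The paper also couples the two processes (phrased as a partition $\PS_n=\sqcup\PS_{n,i}$, $\PS'_n=\sqcup\PS'_{n,i}$ with matching probabilities) and maintains the same stochastic-dominance invariant on the cumulative level weights; your collapsing of the $\FMnorm{w}+1$ roots to a single root of weight $\FMnorm{w}+1$ is a clean simplification the paper does not make explicit, but after that step the two proofs coincide.

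There is one genuine gap. The invariant you propose to carry through the induction is the \emph{normalized} inequality
\[
\frac{\weight{i}_{<k}(T^{(m)})}{\Tweight{i}(T^{(m)})}\;\ge\;\frac{\weight{i}_{<k}(T^{(1)})}{\Tweight{i}(T^{(1)})}\qquad(k\ge 1),
\]
and you call its inductive preservation ``the main obstacle'', leaving it at a heuristic. In fact this normalized statement does not self-propagate in any transparent way: after a coupled step the two total weights increase by the same $\eta_i$, but the cross terms $\eta_i(a-b)$ and $\delta_m B-\delta_1 A$ in the obvious algebraic comparison have no sign from the normalized hypothesis alone. What closes the induction is the \emph{un-normalized} invariant
\[
\weight{i}_{<k}(T^{(m)})\;\ge\;\weight{i}_{<k}(T^{(1)})+(m-1)\qquad(k\ge 1),
\]
together with the deterministic identity $\Tweight{i}(T^{(m)})=\Tweight{i}(T^{(1)})+(m-1)$. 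This is exactly the paper's inequality~\eqref{Eq:weight_inequality}. It holds at $i=0$, and since your quantile coupling forces $K_i^{(m)}\le K_i^{(1)}$ the increment to $\weight{}_{<k}$ is weakly larger on the $T^{(m)}$ side, so the inequality persists; dividing then gives the normalized version and hence the height comparison. Your final paragraph gestures at ``an extra constant term $m-1$ in its total weight'', but the point is that the same surplus $m-1$ must be tracked in the cumulative weight at \emph{every} level, not just in the total. Once you state and verify this stronger invariant, your proof is complete and matches the paper's.
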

\begin{proof}
Suppose $\{T'_i\}$ is a sequence of trees as in Lemma~\ref{Lemma:TreeDistr}.
Our goal is to compare $h(T_i)$ and $h(T_i')$. We use primes to distinguish characteristics of $T_i'$.
By definition:
\[
	\CF_n(x) = \Prob{h(T'_n) \le x}.
\]
To prove the proposition we use the following claim.
\begin{claim}
\label{claim:esp_partition}
Let $\PS_n$ and $\PS'_n$ be the probability spaces for $T_n$ and $T'_n$. We can partition them over the same set of indices $I_n$:
\[
	\PS_n = \sqcup_{i \in I_n} \PS_{n, i}, \ \PS'_n = \sqcup_{i \in I_n} \PS'_{n, i}
\]
so that for every $i \in I_n$ it holds that $\Prob{\PS_{n, i}} = \Prob{\PS'_{n, i}}$,
and in $\PS_{n, i}$ and $\PS'_{n, i}$:
\begin{gather}
	\text{the vectors $\vweight{n}$ and $\vweight[\prime]{n}$ are constant,}\\
	\label{Eq:total_weight_reccurence}
	\Tweight{n} = \Tweight[\prime]{n} + \FMnorm{w},\\
	\label{Eq:weight_inequality}
	\weight{n}_{<k} \ge \weight[\prime]{n}_{<k} + \FMnorm{w} \text{ for any $k \in \MN$},\\
	\label{Eq:prob_inequality}
	\lp{n}_{<k} \ge \lp[\prime]{n}_{<k}  \text{ for any $k \in \MN$}.
\end{gather}
\end{claim}
\begin{proof}
Induction on $n$. For $n = 0$ we have:
\[
	\vweight{0} = \tuple{\FMnorm{w} + 1}, \vweight[\prime]{0} = \tuple{1}
\]
for which the conditions hold. Suppose that the claim statement holds for $\PS_{n-1}$ and $\PS'_{n-1}$.
We show how for each $i \in I_{n-1}$ we can partition $S = \PS_{n - 1, i}$ and $S' = \PS'_{n - 1, i}$ in the way satisfying the claim conditions.
After that the union of these partitions for each $i \in I_{n-1}$ gives us $I_n$ and required partitions of $\PS_n$ and $\PS'_n$.

Let $\xi_n$ be the active vertex of $T_{n-1}$ on iteration $n$.
Define the random variable $K_n$ to be the level of $\xi_n$ in $T_{n-1}$, that is:
\[
	K_n = \min_{v \in V_0} d(\xi_n, v), 	
\]
where $d$ is the graph distance in $T_{n-1}$.
We partition according to the values of $\eta_n$ and $K_n$:
\begin{gather*}
	S = \bigsqcup_{d \in \MN} S_{d} \ \text{with} \ S_{d} = S \cap \{ \eta_n = d \}, \\
	S_{d} = \bigsqcup_{k \in \MN} S_{(d, k)} \ \text{with} \ S_{(d, k)} = S_{d} \cap \{ K_n = k \}.
\end{gather*}
The same way we define $\xi'_n$, $K'_n$ for $T'_{n-1}$ and the partitions of $S'$ into $\{S'_{d}\}_{d  \ge 0}$ and $\{S'_{(d, k)}\}_{d, k  \ge 0}$.
It is clear that:
\[
	\Prob{S_{d}} = \CProb{\eta_n = d}{S} \Prob{S} = \CProb{\eta'_n = d}{S'} \Prob{S'} = \Prob{S'_{d}} 	
\]
because $\Prob{S} = \Prob{S'}$ by the induction hypothesis and the conditional probabilities depend only on the distribution $\LDRandom{\ElementaryIdentities}$, which is the same for these processes. For $S_{d}$ and $S'_{d}$:
\[
	\Tweight{n} = \Tweight{n - 1} + d, \ \Tweight[\prime]{n} = \Tweight[\prime]{n - 1} + d,
\]
which are constant and together with the inductive hypothesis imply~\eqref{Eq:total_weight_reccurence}.
Also by the inductive hypothesis it holds that for any $k \in \MN$:
\begin{align*}
	\CProb{\bigsqcup_{m < k} S_{(d, m)}}{S_{d}} &= \CProb{K_n < k}{S_{d}} = \lp{n - 1}_{<k} \\
	&\ge \lp[\prime]{n - 1}_{<k} = \CProb{K'_n < k}{S'_{d}} = \CProb{\bigsqcup_{m < k} S'_{(d, m)}}{S'_{d}}. 	
\end{align*}
Therefore, we can repartition these sets:
\[
	\bigsqcup_{k \in \MN} S_{(d, k)} = \bigsqcup_{\theta \in \Theta_{d}} S_{d, \theta}, \
	\bigsqcup_{k \in \MN} S'_{(d, k)} = \bigsqcup_{\theta \in \Theta_{d}} S'_{d, \theta}
\]
in such a way that for any $\theta \in \Theta_{d}$ it holds that $\CProb{S_{d, \theta}}{S_{d}} = \CProb{S'_{d, \theta}}{S'_{d}}$ and the random variables $K_n$ and $K'_n$ are constant in $S_{d, \theta}$ and $S'_{d, \theta}$ and satisfy $K_n \le K'_n$. It follows that $\Prob{S_{d, \theta}} = \Prob{S'_{d, \theta}}$ and:
\begin{align*}
	\weight{n}_{<k} &=  \weight{n-1}_{<k} + \Indicator{k = K_n + 1} + d \cdot \Indicator{k > K_n + 1}\\
	&\ge \weight[\prime]{n-1}_{<k} + \FMnorm{w} + \Indicator{k = K_n + 1} + d \cdot \Indicator{k > K_n + 1}\\
	&\ge \weight[\prime]{n-1}_{<k} + \FMnorm{w} + \Indicator{k = K'_n + 1} + d \cdot \Indicator{k > K'_n + 1}\\
	&= \weight[\prime]{n}_{<k} + \FMnorm{w},
\end{align*}
which proves~\eqref{Eq:weight_inequality}. For the probabilities:
\[
	\lp{n}_{<k} = \frac{\weight{n}_{<k}}{\Tweight{n}} \ge \frac{\weight[\prime]{n}_{<k} + \FMnorm{w}}{\Tweight[\prime]{n} + \FMnorm{w}} \ge \frac{\weight[\prime]{n}_{<k}}{\Tweight[\prime]{n}} = \lp[\prime]{n}_{<k},
\]
which implies~\eqref{Eq:prob_inequality}.
\end{proof}

Now it follows from the claim and the law of total probability that:
\begin{align*}
	\Prob{h(T_n) \le x} = &\sum_{i \in I_n} \CProb{h(T_n) \le x}{\PS_{n, i}} \Prob{\PS_{n, i}} \\
	\ge&\sum_{i \in I_n} \CProb{h(T'_n) \le x}{\PS'_{n, i}} \Prob{\PS'_{n, i}}  = \Prob{h(T'_n) \le x},
\end{align*}
where the inequality in the middle follows from~\eqref{Eq:weight_inequality} because the length of the weight vector defines the tree height.
\end{proof}
\begin{corollary}
$\Prob{h(T_n) \le C\ln n} \to 1 \text{ as } n \to \infty.$
\end{corollary}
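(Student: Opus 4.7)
The plan is to combine the two preceding results directly. Proposition \ref{pr:esp_h_bound} gives the pointwise domination $\Prob{h(T_n) \le x} \ge \CF_n(x)$ for every $x \in \MN$, which transfers any lower bound on $\CF_n$ over to a lower bound on $\Prob{h(T_n) \le x}$. Proposition \ref{pr:h_bound} supplies exactly such a bound at $x = C \ln n$, namely $\CF_n(C \ln n) \to 1$ as $n \to \infty$ (with $C < e^2$ as fixed after Proposition \ref{pr:h_bound}).

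First I would apply Proposition \ref{pr:esp_h_bound} with $x = \lfloor C \ln n \rfloor$ (taking the integer part only to stay inside $\MN$ as required by the proposition's hypothesis). This yields
\[
\Prob{h(T_n) \le \lfloor C \ln n \rfloor} \ge \CF_n(\lfloor C \ln n \rfloor).
\]
Next, since the event $\{h(T_n) \le x\}$ is monotone nondecreasing in $x$, we have $\Prob{h(T_n) \le C \ln n} \ge \Prob{h(T_n) \le \lfloor C \ln n \rfloor}$, and similarly $\CF_n$ is monotone, so the right-hand side can be compared with the value of $\CF_n$ provided by Proposition \ref{pr:h_bound}. Passing to the limit $n \to \infty$ and using $\CF_n(C \ln n) \to 1$ gives $\Prob{h(T_n) \le C \ln n} \to 1$, which is the desired conclusion.

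There is essentially no obstacle here; the work has been done in the two propositions and the corollary is just their composition. The only thing to watch is the integrality issue for $x$ in the statement of Proposition \ref{pr:esp_h_bound}, which is handled by the monotonicity observation above. So the proof is a one-line chain of inequalities followed by a limit, and I would present it in exactly that compact form.
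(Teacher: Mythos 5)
Your proof is correct and follows exactly the paper's route: the corollary is stated in the paper with the one-line justification that it follows from Propositions~\ref{pr:esp_h_bound} and~\ref{pr:h_bound}, which is precisely the composition you carry out. Your extra care about the integrality of $x$ is harmless and resolves correctly since $h(T_n)$ is integer-valued, so nothing further is needed.
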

\begin{proof}
It follows from Propositions~\ref{pr:esp_h_bound} and~\ref{pr:h_bound}.
\end{proof}

\begin{theorem}
\label{Thm:IdDepthBound2}
For any $w \in \FreeGroup{X}$ and the corresponding system of probability measures $\{\mu_{n,w}\}$ on $\FreeGroup{X}$ defined by Algorithm~\ref{A:esp_generator}:
\[
	\mu_{n,w} \Set{w' \in \FreeGroup{X}}{\delta(w \circ w'^{-1}) \le C \ln n } \ge \CF_n(C \ln n) \to 1 \text{ as } n\to \infty.
\]
\end{theorem}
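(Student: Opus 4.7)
The statement is essentially a bookkeeping consequence of three facts already in hand, so the plan is to chain them together rather than introduce anything new.

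First, I would fix $w \in F(X)$, $n \in \MN$, and consider the sequence of diagrams $D_0, D_1, \dots, D_n$ and associated forests $T_0, T_1, \dots, T_n$ built by Algorithm~\ref{A:esp_generator} as described at the start of Section~\ref{sub:espAnalysis}. Every realization of the algorithm that produces output $w' = \Reduced{w}_n$ gives a van Kampen diagram $D$ (obtained by folding the $w_n^{-1}$-side of $\partial D_n$ per Lemma~\ref{lemma:forestHeightToDepth}) with boundary label $w \circ w'^{-1}$ and depth $\delta(D) \le h(T_n)$. Hence for the random word $w'$ sampled according to $\mu_{n,w}$ we have the pointwise inequality $\delta(w \circ w'^{-1}) \le h(T_n)$ on the underlying probability space.

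Next, by monotonicity this inclusion of events yields
\[
\mu_{n,w}\Set{w' \in F(X)}{\delta(w \circ w'^{-1}) \le C \ln n} \ge \Prob{h(T_n) \le C \ln n}.
\]
Then I would apply Proposition~\ref{pr:esp_h_bound}, which compares the distribution of $h(T_n)$ for the forest process (started from $|w|+1$ roots) with the distribution of the height of the single-rooted tree process of Section~\ref{sub:wspAnalysis}, giving $\Prob{h(T_n) \le x} \ge \CF_n(x)$ for every $x \in \MN$. Substituting $x = C \ln n$ establishes the first inequality in the theorem.

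Finally, the limit $\CF_n(C \ln n) \to 1$ as $n \to \infty$ is exactly Proposition~\ref{pr:h_bound} (with the constant $C < e^2$ fixed there). The combination of these three steps delivers the claimed bound and the convergence to $1$. There is no real obstacle to overcome: all the analytical content (the CMJ-process height estimate and the stochastic domination of the forest height by the single-root tree height) has already been absorbed into Propositions~\ref{pr:h_bound} and~\ref{pr:esp_h_bound}, so the proof here is a one-paragraph reduction.
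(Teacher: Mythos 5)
Your proposal is correct and follows exactly the paper's own argument: the pointwise bound $\delta(w \circ w'^{-1}) \le h(T_n)$ from Lemma~\ref{lemma:forestHeightToDepth}, the stochastic comparison $\Prob{h(T_n) \le x} \ge \CF_n(x)$ from Proposition~\ref{pr:esp_h_bound}, and the convergence $\CF_n(C\ln n) \to 1$ from Proposition~\ref{pr:h_bound}. The paper's proof is a one-line citation of precisely these three ingredients, so there is nothing to add.
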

\begin{proof}
It follows from Lemma~\ref{lemma:forestHeightToDepth} and Propositions~\ref{pr:esp_h_bound} and~\ref{pr:h_bound}.
\end{proof}

\begin{thma}{\textbf{B.}}
\emph{For any finite presentation $\gpr{X}{R}$ and $w\in \FreeGroup{X}$
Algorithm $\CA_{WP}$ solves the randomized problem
$\left(\ESP(w), \{\mu_{n, w}\}_{n \ge 0}\right)$ defined by Algorithm~\ref{A:esp_generator} generically in polynomial time $\tilde{O}\rb{(\FMnorm{w} + n) n^{e^2 \ln L(R)}}$.
}
\end{thma}
\begin{proof}
By Theorem~\ref{Thm:wspSolverComplexity} the solver time complexity on the output $w'$of Algorithm~\ref{A:esp_generator} is bounded by:
\[
	\tilde{O}\rb{ \Fnorm{w'} L(R)^{\delta(w')}}.
\]
We can bound $\Fnorm{w'}$ by $\Fnorm{w} + n \max_{r \in R} \FMnorm{r}$.
By Theorem~\ref{Thm:IdDepthBound2} and the fact that $C < e^2$ the generic time complexity of $\CA_{WP}$ on $w'$ is bounded by:
\[
	\tilde{O}\rb{(\FMnorm{w} + \max_{r \in R} \FMnorm{r} n) L(R)^{e^2 \ln n}} = \tilde{O}\rb{(\FMnorm{w} + n) n^{e^2 \ln L(R)}}.
\]
\end{proof}

Note that the constant $C$ does not depend on $w$ and
the rate of convergence in Theorem \ref{Thm:IdDepthBound2} is uniformly bounded below by $\CF_n(C \ln n)$.
Therefore the following corollary holds.

\begin{corollary}
For any infinite sequence $\{u_n\}_{n \ge 0} \subset \FreeGroup{X}$:
\[
	\mu_{n,u_n} \Set{w' \in \FreeGroup{X}}{\delta(u_n \circ w'^{-1}) \le C \ln n } \to 1 \text{ as } n\to \infty.
\]
\end{corollary}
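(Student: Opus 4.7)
The plan is to observe that the statement is essentially a free byproduct of Theorem~\ref{Thm:IdDepthBound2}, once one notes that the lower bound proved there is uniform in the input word $w$. Concretely, the theorem establishes
\[
\mu_{n,w} \Set{w' \in \FreeGroup{X}}{\delta(w \circ w'^{-1}) \le C \ln n } \ge \CF_n(C \ln n),
\]
and the right-hand side depends only on the presentation $\gpr{X}{R}$ (via the distribution $\DRandom{\ElementaryIdentities}$ and the constant $C$) and on $n$, but not on $w$. Hence one may substitute $w = u_n$ at the $n$th step without any loss.

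To carry this out, I would first verify this uniformity by inspecting the proof chain: the bound passes through Lemma~\ref{lemma:forestHeightToDepth} (which is a deterministic inequality $\delta(w\circ w'^{-1}) \le h(T_n)$ valid for every $w$) and Proposition~\ref{pr:esp_h_bound}, whose coupling argument in Claim~\ref{claim:esp_partition} produces the inequality $\Prob{h(T_n)\le x} \ge \CF_n(x)$ with $\CF_n$ defined by the auxiliary process $\{T'_i\}$ that is seeded at a single vertex, independent of $w$. In particular, the only place $w$ enters is in the extra additive term $\FMnorm{w}$ in~\eqref{Eq:total_weight_reccurence} and~\eqref{Eq:weight_inequality}, and that term only helps: it makes the cumulative weights at low levels larger, pushing the process toward lower-height trees. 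So the bound $\CF_n(C\ln n)$ holds for every choice of input word.

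Second, I would apply this uniform bound with $w = u_n$ for each $n$ to get
\[
\mu_{n,u_n} \Set{w' \in \FreeGroup{X}}{\delta(u_n \circ w'^{-1}) \le C \ln n } \ge \CF_n(C \ln n),
\]
and then invoke Proposition~\ref{pr:h_bound} to conclude that the right-hand side tends to $1$ as $n \to \infty$.

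There is no real obstacle here; the only subtlety worth emphasising in the proof is why the argument does not degrade when $\FMnorm{u_n}$ is allowed to grow with $n$. This is answered precisely by the coupling in Claim~\ref{claim:esp_partition}: the forest $T_n$ associated with input $u_n$ is stochastically dominated (in height) by the single-root tree $T'_n$, regardless of how large $\FMnorm{u_n}$ is, so the quantitative bound $\CF_n(C\ln n)$ is genuinely uniform over the sequence $\{u_n\}$.
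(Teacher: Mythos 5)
Your proposal is correct and matches the paper's argument: the paper likewise notes that $C$ and the lower bound $\CF_n(C\ln n)$ in Theorem~\ref{Thm:IdDepthBound2} are uniform in $w$ (the stochastic comparison of Proposition~\ref{pr:esp_h_bound} only improves as $\FMnorm{w}$ grows), so one may take $w = u_n$ at the $n$th step and conclude via Proposition~\ref{pr:h_bound}.
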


\subsection{Conjugacy search problem.}
\label{sub:cspAnalysis}
Here we investigate challenges produced by Algorithm~\ref{A:csp_generator}.

Let $w'$ be the output of Algorithm~\ref{A:csp_generator} for an input word $w$. For simplicity, assume that $w$ is cyclically reduced.
Step~$1$ of the algorithm becomes unnecessary and let $w_0$ be picked on step~$2$ and $u$ produced on step~$3$ of the algorithm.
Suppose $D$ is the corresponding to the word $w_0 \circ u^{-1}$ diagram as in Lemma~\ref{lemma:forestHeightToDepth}.
We can attach an edge with label $x$ at the end of $w_0$ to get the diagram with the boundary $w_0 \circ x \circ x^{-1} \circ u^{-1}$.
By identifying the end vertex of $w_0 \circ x$ with the base vertex $v_0$ we construct an annular diagram $A_0$ with $\mu_{\text{in}}(A_0) = w_0 \circ x$ and $\mu_{\text{out}}(A_0) = u \circ x$.
Folding the outer boundary of $A_0$, which gives $\mu_{\text{out}} = \CReduced{u \circ x}$, and picking the correct $v_{\text{out}}$, which defines a cyclic permutation of $\CReduced{u \circ x}$, we get an annular diagram $A$ with $\mu_{\text{in}}(A) = \CReduced{w}$ and $\mu_{\text{out}}(A) = w'$. The following lemma is obvious.
\begin{lemma}
\label{lemma:csp_depth_bound}
$\delta_{\conj{}}(\CReduced{w}, w') \le \delta (A) \le \delta(D)$.
\qed
\end{lemma}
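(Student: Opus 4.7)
The plan is to prove the two inequalities separately. The first, $\delta_{\conj{}}(\CReduced{w}, w') \le \delta(A)$, is immediate from the definition of conjugate depth once it is verified that $A$ is a legitimate annular diagram over $\gpr{X}{R}$ with $\mu_{\text{in}}(A) = \CReduced{w}$ and $\mu_{\text{out}}(A) = w'$. The construction of $A$ from $D$ is designed precisely to yield these labels: attaching an $x$-edge to the end of $w_0$, identifying the new endpoint with $v_0$ to form the annulus $A_0$, folding the outer boundary to cyclically reduce $u \circ x$, and rebasing to select the required cyclic permutation. Since the minimum in the definition of $\delta_{\conj{}}(\CReduced{w}, w')$ is taken over all such annular diagrams, the desired bound follows.

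The second inequality $\delta(A) \le \delta(D)$ is the real content. The plan is to compare the dual graphs $D^\ast$ and $A^\ast$ and argue that the change is confined to the unbounded face. Since every operation used to pass from $D$ to $A$ (edge attachment, one vertex identification, folding of an outer boundary path) acts only on the exterior of $D$, no bounded cell is created, destroyed, or merged, so the cells of $A$ are in natural bijection with the bounded cells of $D$. Any adjacency between two bounded cells of $D$ persists in $A^\ast$. The only structural change is that the unbounded face $c_{\text{out}}$ of $D$ is split in $A$ into two faces $c_{\text{in}}^A$ and $c_{\text{out}}^A$; hence a bounded cell $c$ whose boundary meets $\partial c_{\text{out}}$ in $D$ meets $\partial c_{\text{in}}^A \cup \partial c_{\text{out}}^A$ in $A$ and is therefore adjacent in $A^\ast$ to at least one of $c_{\text{in}}^A, c_{\text{out}}^A$.

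Given this structural comparison, any geodesic $c = c_0, c_1, \ldots, c_k = c_{\text{out}}$ in $D^\ast$ realizing $d^\ast(c, c_{\text{out}}) \le \delta(D)$ translates into a walk of the same length in $A^\ast$ from $c$ to some vertex in $\{c_{\text{in}}^A, c_{\text{out}}^A\}$. Thus
\[
\min\bigl(d^\ast(c, c_{\text{out}}^A),\, d^\ast(c, c_{\text{in}}^A)\bigr) \le d^\ast(c, c_{\text{out}}) \le \delta(D)
\]
for every cell $c$ of $A$, and taking the maximum over $c$ yields $\delta(A) \le \delta(D)$. The main obstacle, modest as it is, is the planar bookkeeping required to certify that folding together with the vertex identification genuinely leaves bounded cells intact; this is a local planarity check relying on the fact that all identified or folded edges lie on the boundary of the single unbounded region of $D$, so no new bounded region can be formed and no two existing cells collide.
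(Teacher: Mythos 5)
Your treatment of the first inequality is fine: once $A$ is checked to be an annular diagram over $\gpr{X}{R}$ with $\mu_{\text{in}}(A)=\CReduced{w}$ and $\mu_{\text{out}}(A)=w'$, the bound $\delta_{\conj{}}(\CReduced{w},w')\le\delta(A)$ is the definition of conjugate depth. The gap is in the second inequality, precisely at the step asserting that a bounded cell whose boundary meets $\partial c_{\text{out}}$ in $D$ still meets $\partial c_{\text{in}}^A\cup\partial c_{\text{out}}^A$ in $A$. The passage from $A_0$ to $A$ folds the outer boundary so as to cyclically reduce $u\circ x$; each fold identifies two adjacent edges of $\partial c_{\text{out}}$, and when both of them carry cells on their far sides the identified edge becomes an \emph{interior} edge of $A$. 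A cell all of whose contacts with the outer face are consumed in this way loses its adjacency to $c_{\text{out}}$ and need not gain one to $c_{\text{in}}$, so the last step of your translated geodesic can fail. Your closing remark that ``no new bounded region can be formed and no two existing cells collide'' is correct for folds of adjacent boundary edges, but it excludes the wrong failure modes: the danger is not creation or merging of cells but burial of boundary edges. Worse, your argument as written is a general ``folding the boundary does not increase depth'' principle, and that principle is false: applied to the folding of $\partial D_n$ that produces $D$ it would yield $\delta(D)\le\delta(D_n)=1$ (every cell of the tree-like diagram $D_n$ touches the outer face), whereas Lemma~\ref{L:treeHeightToDepth} exists precisely because that folding can push the depth up to $h(T_n)$.

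Any repair must use the specific geometry of this fold: it is a single ``zip'' based at the common vertex $v_0$ of the two boundary components, identifying an initial segment of the outer loop with the reverse of its terminal segment, while the inner boundary $\CReduced{w}$ is never folded and $v_0$ remains on $\partial c_{\text{in}}^A$. Even then, a cell met only in the middle of the zipped path can end up at dual distance comparable to its distance from the nearer end of the zip, so the literal bound $\delta(A)\le\delta(D)$ is delicate and you should not expect it to follow from adjacency-preservation alone. The robust route --- and all that Theorem~\ref{Thm:ConjDepthBound} actually uses --- is to bound $\delta(A)$ by $h(T_n)$ directly: each cell of $A$ is a cell of $D_n$ attached at a tree vertex of depth at most $h(T_n)$; the chain of attachments yields a path in the dual graph (consecutive elements share a vertex, which suffices for adjacency in $D^\ast$); its last element contains $v_0\in\partial c_{\text{in}}^A$; and foldings only identify, never separate, so this path survives in $A^\ast$.
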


\begin{theorem}
\label{Thm:ConjDepthBound}
For any $w \in \FreeGroup{X}$ and the corresponding system of probability measures $\{\nu_{n,w}\}$ on $\FreeGroup{X}$ defined by Algorithm~\ref{A:csp_generator}:
\[
	\nu_{n,w} \Set{w' \in \FreeGroup{X}}{\delta_{\conj{}}(\CReduced{w}, w') \le C \ln n } \ge \CF_n(C \ln n) \to 1 \text{ as } n\to \infty.
\]
\end{theorem}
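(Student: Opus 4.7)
The plan is to reduce the conjugacy-generator analysis to the equivalence-generator analysis of Section~\ref{sub:espAnalysis}, since Algorithm~\ref{A:csp_generator} is essentially a wrapper around \REW{} applied to the cyclically reduced prefix $w_0$ of $\CReduced{w}$. All of the probabilistic machinery (branching process, forest heights, the constant $C$) carries over unchanged; the only new content is a diagram-theoretic bookkeeping step that lets us translate the van Kampen depth of $w_0 \circ u^{-1}$ into the annular conjugacy depth of $\CReduced{w}$ relative to $w'$.

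Concretely, I would fix the input $w$ and denote by $w_0$ the word obtained at step~2 of Algorithm~\ref{A:csp_generator}, and by $u$ the output of \REW{}$(\gpr{X}{R}, w_0, n)$ at step~3. The execution of \REW{} on $w_0$ determines a sequence of intermediate words $w_0 = u_0, u_1, \dots, u_n$ with $u_n$ freely reducing to $u$, and hence (exactly as in Section~\ref{sub:espAnalysis}) a sequence of van Kampen diagrams $D_0, D_1, \dots, D_n$ together with the associated forest $T_n$ rooted at the $|w_0|+1$ vertices of $D_0$. Let $D$ be the diagram obtained from $D_n$ by folding the portion of $\partial D_n$ labeled $u_n^{-1}$, so that $\mu(D) = w_0 \circ u^{-1}$. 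Lemma~\ref{lemma:forestHeightToDepth} (applied with $w_0$ in place of $w$) gives $\delta(D) \le h(T_n)$.

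Next I would invoke Lemma~\ref{lemma:csp_depth_bound}: the very same diagram $D$ is the one used in its statement, so
\[
\delta_{\conj{}}(\CReduced{w}, w') \le \delta(D) \le h(T_n).
\]
Note that the preliminary cyclic reduction at step~1 of Algorithm~\ref{A:csp_generator} and the final attachment of $x$, folding of the outer boundary, and uniform cyclic permutation at step~4 do not affect this inequality: they only modify the outer boundary of the annular diagram $A$ built from $D$ in Lemma~\ref{lemma:csp_depth_bound}, and can only decrease $\delta(A)$ relative to $\delta(D)$.

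Finally, by Proposition~\ref{pr:esp_h_bound} the height of $T_n$ (a forest of $|w_0|+1$ trees generated by \REW{}) stochastically dominates the tree-height of the corresponding single-root process of Section~\ref{sub:wspAnalysis} \emph{from below} in the sense $\Prob{h(T_n) \le x} \ge \CF_n(x)$, and by Proposition~\ref{pr:h_bound} one has $\CF_n(C \ln n) \to 1$. Combining these bounds with the inequality above yields
\[
\nu_{n,w} \Set{w' \in \FreeGroup{X}}{\delta_{\conj{}}(\CReduced{w}, w') \le C \ln n } \ge \Prob{h(T_n) \le C \ln n} \ge \CF_n(C \ln n) \to 1,
\]
which is exactly the claim. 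Since every ingredient is already in place, there is no real obstacle; the only point requiring attention is verifying that the diagram $D$ referenced in Lemma~\ref{lemma:csp_depth_bound} is precisely the one analyzed by the forest $T_n$ of Section~\ref{sub:espAnalysis}, so that Proposition~\ref{pr:esp_h_bound} applies verbatim.
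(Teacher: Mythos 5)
Your proposal is correct and follows essentially the same route as the paper: the paper's proof is precisely the chain $\delta_{\conj{}}(\CReduced{w}, w') \le \delta(A) \le \delta(D) \le h(T_n)$ obtained from Lemmas~\ref{lemma:csp_depth_bound} and~\ref{lemma:forestHeightToDepth}, combined with Propositions~\ref{pr:esp_h_bound} and~\ref{pr:h_bound}. Your additional remarks on why step~1 and step~4 of Algorithm~\ref{A:csp_generator} do not increase the depth merely spell out what the paper's construction of the annular diagram $A$ already encodes.
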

\begin{proof}
It follows from Lemmas~\ref{lemma:csp_depth_bound} and~\ref{lemma:forestHeightToDepth} and Propositions~\ref{pr:esp_h_bound} and~\ref{pr:h_bound}.
\end{proof}

\begin{thma}{\textbf{C.}}
\emph{
For a finite presentation $\gpr{X}{R}$ and $w\in \FreeGroup{X}$ Algorithm $\CA_{CP}$ solves the randomized problem
$\left(\CSP(w), \{\nu_{n, w}\}_{n \ge 0} \right)$ defined by Algorithm~\ref{A:csp_generator} generically in polynomial time $\tilde{O}\rb{\FMnorm{\CReduced{w}}(\FMnorm{\CReduced{w}} + n) n^{2 e^2 \ln L(R)}}$.
}
\end{thma}

\begin{proof}
By Theorem~\ref{Thm:cspSolverComplexity} the time complexity of $\CA_{CP}$ on the input $(\CReduced{w}, w')$ is bounded by:
\[
	\tilde{O}\rb{ \Fnorm{\CReduced{w}} \Fnorm{w'} L(R)^{2 \delta_{\conj{}}(\CReduced{w}, w')}}.
\]
We can bound $\Fnorm{w'}$ by $\Fnorm{\CReduced{w}} + n \max_{r \in R} \FMnorm{R}$. By Theorem~\ref{Thm:ConjDepthBound} and the fact that $C < e^2$ the generic time complexity of $\CA_{CP}$ on $(\CReduced{w}, w')$ is bounded by:
\[
	\tilde{O}\rb{\FMnorm{\CReduced{w}}(\FMnorm{\CReduced{w}} + n \max_{r \in R} \FMnorm{R}) L(R)^{2e^2 \ln n}} = \tilde{O}\rb{\FMnorm{\CReduced{w}}(\FMnorm{\CReduced{w}} + n) n^{2 e^2 \ln L(R)}}.
\]
\end{proof}

Note that the constant $C$ does not depend on $w$ and the rate of convergence in Theorem \ref{Thm:ConjDepthBound} is
uniformly bounded by $\CF_n(C \ln n)$. Therefore the following corollary holds.

\begin{corollary}
For any infinite sequence $\{u_n\}_{n \ge 0} \subset \FreeGroup{X}$:
\[
	\nu_{n,u_n} \Set{w' \in \FreeGroup{X}}{\delta_{\conj{}}(\CReduced{u_n}, w') \le C \ln n } \to 1 \text{ as } n\to \infty.
\]
\end{corollary}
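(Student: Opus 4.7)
The plan is to observe that the lower bound given in Theorem~\ref{Thm:ConjDepthBound} is already \emph{uniform} in the input word $w$, so the corollary is obtained by substituting $w = u_n$ at stage $n$ and then passing to the limit. Indeed, Theorem~\ref{Thm:ConjDepthBound} provides the explicit inequality
\[
\nu_{n,w}\Set{w' \in \FreeGroup{X}}{\delta_{\conj{}}(\CReduced{w}, w') \le C \ln n} \ge \CF_n(C \ln n),
\]
in which the right-hand side is a function of $n$ alone: it is the distribution of the random tree height $h(T_n)$ from Section~\ref{sub:wspAnalysis}, which depends only on $\DRandom{\ElementaryIdentities}$ (equivalently, only on the presentation $\gpr{X}{R}$ via $\ElementaryIdentities$), and not on the word being conjugated.

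The one thing that genuinely needs verification is that uniformity in $w$ is preserved through the whole chain of supporting results. I would trace it as follows. Lemma~\ref{lemma:csp_depth_bound} bounds $\delta_{\conj{}}(\CReduced{w}, w')$ by $\delta(D)$, where $D$ is the van Kampen diagram associated with the auxiliary word $w_0 \circ u^{-1}$ from Algorithm~\ref{A:csp_generator}; in turn Lemma~\ref{lemma:forestHeightToDepth} bounds $\delta(D)$ by $h(T_n)$ for the branching forest built along the sequence $w_0, \dots, w_n$. The distribution of $h(T_n)$ is controlled by Proposition~\ref{pr:esp_h_bound}, whose proof (via the coupling in Claim~\ref{claim:esp_partition}) compares the forest to the single-root tree process of Lemma~\ref{Lemma:TreeDistr} using only the initial length $\FMnorm{w_0}$ as a nuisance parameter, and yields the inequality $\Prob{h(T_n) \le x} \ge \CF_n(x)$ with no $w$-dependence on the right-hand side.

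Given this uniformity, the proof reduces to two lines: apply the uniform bound from Theorem~\ref{Thm:ConjDepthBound} with $w = u_n$ at each stage, obtaining
\[
\nu_{n,u_n}\Set{w' \in \FreeGroup{X}}{\delta_{\conj{}}(\CReduced{u_n}, w') \le C \ln n} \ge \CF_n(C \ln n),
\]
and then invoke Proposition~\ref{pr:h_bound}, which asserts $\CF_n(C \ln n) \to 1$ as $n \to \infty$, to conclude that the left-hand side tends to $1$ as well. I do not anticipate a substantive obstacle here: all the real analytic work was done in establishing the uniform concentration of the CMJ tree height, and the corollary is essentially a bookkeeping consequence of the fact that the constant $C$ and the convergence rate $\CF_n(C\ln n)$ in Theorem~\ref{Thm:ConjDepthBound} do not see the input word at all.
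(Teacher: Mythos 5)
Your proposal is correct and matches the paper's own argument: the paper justifies this corollary exactly by remarking that the constant $C$ does not depend on $w$ and that the convergence rate in Theorem~\ref{Thm:ConjDepthBound} is uniformly bounded below by $\CF_n(C\ln n)$, so substituting $w=u_n$ at each stage and letting $n\to\infty$ gives the claim. Your extra tracing of the uniformity through Lemmas~\ref{lemma:csp_depth_bound} and~\ref{lemma:forestHeightToDepth} and Proposition~\ref{pr:esp_h_bound} is a sound elaboration of the same point, not a different route.
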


\subsection{Membership search problem}
\label{sub:mspAnalysis}
Here we investigate challenges produced by Algorithms~\ref{A:msp_generator0} and~\ref{A:msp_generator}.

\begin{thma}{\textbf{D.}}
\emph{
For any finite presentation $\gpr{X}{R}$ and a finite set $H \subset \FreeGroup{X}$ Algorithm $\CA_{MP}$ solves the randomized problem $\left(\MSP(H), \{\rho_{k, n, H}\}_{k, n \ge 0}\right)$ defined by Algorithm~\ref{A:msp_generator0} generically in polynomial time $\tilde{O}\rb{(k + n)n^{e^2 \ln L(R)}}$.
}
\end{thma}
\begin{proof}
By Theorem~\ref{Thm:mspSolverComplexity} the time complexity of $\CA_{MP}$ on a word $w'$ is bounded by:
\[
	\tilde{O}\rb{\rb{\Fnorm{w'}+L(H)} L(R)^{\delta_H(w')}},
\]

By Theorem~\ref{Thm:IdDepthBound2}:
\[
	\mu_{n,v} \Set{w' \in \FreeGroup{X}}{\delta(v \circ w'^{-1}) \le C \ln n } \ge \CF_n(C \ln n) \to 1 \text{ as } n\to \infty.
\]
It is clear that $\delta_H(w') \le 1 + \delta(v \circ w'^{-1})$. Since $C < e^2$ for any $k$:
\[
	\rho_{k, n, H} \Set{w' \in \FreeGroup{X}}{\delta_H(w') \le e^2 \ln n } \to 1 \text{ as } n \to \infty.
\]
We can bound $\FMnorm{w'}$ by $k \max_{h \in H} \Fnorm{h} + n \max_{r \in R} \Fnorm{r}$. Hence, the generic time complexity of $\CA_{MP}$ is bounded by:
\[
	\tilde{O}\rb{\rb{k \max_{h \in H} \Fnorm{h} + n \max_{r \in R} \Fnorm{r}+L(H)} L(R)^{e^2 \ln n}} = \tilde{O}\rb{(k + n) n^{e^2 \ln L(R)}}.
\]
\end{proof}

Let us analyze Algorithm~\ref{A:msp_generator}.
As in Section~\ref{sub:wspAnalysis} we consider a random sequence of words $\varepsilon = w_0$, $w_1$, \dots, $w_n$ generated by Algorithm~\ref{A:msp_generator} with $w' = \ovw_n$. It induces the random sequence of diagrams $D_0$, $D_1$, \dots, $D_n$, where $D_0$ is the empty diagram and $D_i$ is obtained from $D_{i-1}$ with probability $q$ by attaching a random $H$-cell to $v_0$ and with probability $1 - q$ by attaching a random $\DiagElementaryIdentities$-element. It induces a discrete random process on trees $\{ T_n \}_{n \ge 0}$ similar to the one described in Section~\ref{sub:wspAnalysis}.
It is defined by the following rules. The tree $T_0$ consists of a single vertex $v_0$ with $\weight{0}(v_0) = 1$. For $i = 1, \dots, n$ the tree $T_{i}$ is constructed from $T_{i-1}$ by adding $\eta_i - 1$ new children to a random vertex $\xi_i$.
We pick $\eta_i$ and $\xi_i$ as following:
\[
\begin{array}{llll}	
	\eta_i \gets \LDRandom{H^{\pm}}, & \xi_i \gets v_0 & \quad & \text{with probability $q$,}\\
	\eta_i \gets \LDRandom{\ElementaryIdentities}, &\xi_i \text{ satisfies~\eqref{Eq:treeNodeDistr}}  & \quad & \text{with probability $1 - q$}\\
\end{array}
\]
The weight of a vertex $u \in V_{i}$ satisfies~\eqref{Eq:weightRecurrentFormula}. Note that $\LDRandom{H^{\pm}} = \LDRandom{H}$.
\begin{lemma}
\label{lemma:mspHDepthTreeBound}
$\delta_H(w') \le \delta(D) \le h(T_n).$	
\end{lemma}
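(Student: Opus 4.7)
The plan is to mirror the argument used for Lemma~\ref{L:treeHeightToDepth}, adapted to the generalized van Kampen setting. First, note that $D_n$ is a generalized van Kampen diagram over $\gp{H} \le \group{X}{R}$: the cells attached with probability $q$ at the base vertex $v_0$ become $H$-cells (their boundary labels lie in $H^\pm$ and $v_0$ lies on their boundary), and the cells attached via $\ElementaryIdentities$-transformations become $R$-cells. Let $D$ denote, as in Section~\ref{sub:wspAnalysis}, the diagram obtained by folding the boundary of $D_n$; the folding leaves $v_0 \in \partial c_{\text{out}}$ untouched and preserves the $H$/$R$ classification of cells, so $D$ is still a generalized van Kampen diagram with $\mu(D) = w'$.

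For the first inequality, since $D$ is a generalized van Kampen diagram for $w'$, the definition
\[
\delta_H(w') = \min\Set{\delta(D')}{D' \text{ is a gen.\ vK diagram with } \mu(D') = w'}
\]
immediately yields $\delta_H(w') \le \delta(D)$.

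For the second inequality I would set up the same correspondence between cells of $D_n$ and vertices of $T_n$ as in Lemma~\ref{L:treeHeightToDepth}. Each iteration $i$ attaches an element $t_i \in \DiagElementaryIdentities \cup \{H\text{-cells}\}$ at an active vertex $\xi_i$, adding a new cell $c_i$ (or a free edge, which we may ignore since it contributes no cell to $C(D_n)$); the children of $\xi_i$ introduced in $T_i$ are precisely the non-$\xi_i$ vertices of $c_i$. A straightforward induction on $i$ then shows that for every cell $c_i \in C(D_n)$,
\[
d^\ast(c_i, c_{\text{out}}) \le d_{T_n}(\xi_i, v_0) + 1 \le h(T_n),
\]
where the $+1$ accounts for the last edge in the dual-graph path and disappears for $H$-cells, which share $v_0$ with $c_{\text{out}}$ and therefore already satisfy $d^\ast(c_i,c_{\text{out}})=1 = d_{T_n}(v_0,v_0)+1$. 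Since folding only identifies boundary edges and can never increase dual-graph distances to $c_{\text{out}}$ (because $v_0$ stays on $\partial c_{\text{out}}$), the same bound holds in $D$, giving $\delta(D) \le h(T_n)$.

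The only mildly delicate point, and the part I would write out carefully, is confirming that $H$-cells fit the tree bookkeeping exactly like $R$-cells: they are attached at $\xi_i = v_0$, produce children of the root in $T_n$, and their adjacency to $c_{\text{out}}$ (via $v_0$) matches the tree distance $1$ from the root. Once this is checked, the remainder is the verbatim adaptation of the proof of Lemma~\ref{L:treeHeightToDepth}.
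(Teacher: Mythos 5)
Your proposal is correct and follows exactly the route the paper takes: the paper's entire proof of this lemma is the single line ``The same as in Lemma~\ref{L:treeHeightToDepth}'', whose argument is precisely your two steps (the first inequality from the definition of $\delta_H$ as a minimum over generalized van Kampen diagrams with boundary label $w'$, the second from the fact that folding keeps $v_0$ on $\partial c_{\text{out}}$ so dual distances to $c_{\text{out}}$ are controlled by levels in $T_n$). Your extra verification that $H$-cells are attached at $v_0$ and hence sit at dual distance $1$ from $c_{\text{out}}$, matching level $0$ of the tree, is exactly the adaptation the paper leaves implicit.
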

\begin{proof}
The same as in Lemma~\ref{L:treeHeightToDepth}.
\end{proof}

To get a logarithmic bound for the process above we consider another discrete branching process on trees $\{ T'_i \}_{i \ge 0}$ as follows.
The tree $T'_0$ consists of a single vertex $v'_0$ with $\weight[\prime]{0}(v'_0) = 1$.
For $i = 1, \dots, n$ the tree $T'_{i}$ is constructed from $T'_{i-1}$ by adding $\eta'_i - 1$ new children to a random vertex $\xi'_i$ satisfying~\eqref{Eq:treeNodeDistr}.
We pick $\eta'_i$ as follows:
\[
\begin{array}{ll}	
	\eta'_i \gets \LDRandom{H^{\pm}} & \text{with probability $q$,}\\
	\eta'_i \gets \LDRandom{\ElementaryIdentities}  &\text{with probability $1 - q$.}\\
\end{array}
\]
We define the weight of a vertex $u \in V'_{i}$ by~\eqref{Eq:weightRecurrentFormula}.
Proposition~\ref{pr:msp_aux_h_bound} gives a generic logarithmic bound on the height of $T'_n$ and Lemma~\ref{Lemma:mspTreeComparison} shows that $T'_n$ is probabilistically higher that $T_n$.
\begin{proposition}\label{pr:msp_aux_h_bound}
There exists a constant $C' < e^2$ depending on $\DRandom{\ElementaryIdentities}$, $\DRandom{H}$, and $q$ such that:
$$\Prob{h(T'_n) \le C'\ln n} \to 1 \text{ as } n \to \infty.$$
\end{proposition}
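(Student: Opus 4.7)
The plan is to reduce this to the same Crump-Mode-Jagers framework used in the proof of Proposition~\ref{pr:h_bound}. The process $\{T'_i\}$ uses exactly the same vertex-selection rule~\eqref{Eq:treeNodeDistr} as the process $\{T_i\}$ from Section~\ref{sub:wspAnalysis}; only the offspring-count distribution has changed. Specifically, $\eta'_i$ is now drawn from the mixture
\[
    \eta'_i \;\sim\; q \cdot \LDRandom{H^{\pm}} + (1-q) \cdot \LDRandom{\ElementaryIdentities},
\]
independently of $\xi'_i$. Thus $\{T'_i\}$ fits into the same family of discrete branching processes, with an offspring law that is supported on the finite set $\{0,1,\dots,M\}$, where $M = \max\bigl(\max_{h \in H}\FMnorm{h},\,\max_{u \in \ElementaryIdentities}\FMnorm{u}\bigr)$. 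In particular, the offspring law has finite moments of all orders.

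Next, I would translate $\{T'_i\}$ into a CMJ process as in Section~\ref{SubS:particularCMJ}. The weight-proportional vertex selection corresponds, in the CMJ picture, to letting each vertex produce children at a rate proportional to its current weight, and the mixture distribution above dictates the reproductive measure. Since the assumptions of Section~\ref{SubS:particularCMJ} depend only on (i) the weight-proportional selection rule and (ii) sufficiently regular moment conditions on the offspring distribution, both of which are satisfied here, Theorem~\ref{Thm:cmjTreeHeightLimite} applies and yields a constant $C'$ with
\[
    \Prob{\lim_{n \to \infty} \frac{h(T'_n)}{\ln n} = C'} = 1,
\]
whence $\Prob{h(T'_n) \le (C'+\varepsilon)\ln n} \to 1$ for every $\varepsilon > 0$, exactly as in the proof of Proposition~\ref{pr:h_bound}.

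The only substantive point is verifying that the resulting constant $C'$ is strictly less than $e^2$. For the unmixed process of Section~\ref{sub:wspAnalysis}, the bound $C < e^2$ comes from a universal estimate on the Malthusian parameter of a weighted branching process whose offspring count is a.s.\ positive and has finite expectation, applied to $\LDRandom{\ElementaryIdentities}$. The mixture $q\LDRandom{H^{\pm}} + (1-q)\LDRandom{\ElementaryIdentities}$ again has positive, finite expectation bounded above by $\max(\max_{h\in H}\FMnorm{h},\max_{u\in \ElementaryIdentities}\FMnorm{u})$, so the same universal estimate yields $C' < e^2$, with $C'$ depending on $\DRandom{\ElementaryIdentities}$, $\DRandom{H}$, and $q$ through the expectation and higher moments of the mixture.

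The main obstacle is bookkeeping in step two: carefully checking that the appendix's CMJ conditions are indifferent to whether the offspring count comes from a single distribution or a finite mixture of such distributions. Once that is verified, both the almost-sure convergence and the bound $C' < e^2$ follow by direct appeal to the cited theorems, with no further analysis needed.
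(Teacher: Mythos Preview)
Your proposal is correct and matches the paper's approach exactly: the paper's proof reads in full ``The same as in Proposition~\ref{pr:h_bound},'' and you have spelled this out by observing that $\{T'_i\}$ is another instance of the CMJ construction of Section~\ref{SubS:particularCMJ}, now with $\CM$ the finitely supported mixture $q\,\LDRandom{H^{\pm}} + (1-q)\,\LDRandom{\ElementaryIdentities} - 1$, so that Theorem~\ref{Thm:cmjTreeHeightLimite} applies verbatim. One minor correction: the bound $1/(\alpha\gamma) < e^2$ in that theorem requires only $\AvgM > 0$ and finite $\supp\CM$, not that the offspring count be a.s.\ positive.
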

\begin{proof}
The same as in Proposition~\ref{pr:h_bound}.
\end{proof}
\begin{lemma}
\label{Lemma:mspTreeComparison}
For any $x \in \MN$:
\[
	\Prob{h(T_n) \le x} \ge \Prob{h(T'_n) \le x}.
\]
\end{lemma}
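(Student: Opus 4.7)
The plan is to adapt the partition/coupling argument used in the proof of Claim~\ref{claim:esp_partition}. The two processes differ only in what happens on an $H$-step (taken with probability $q$): in $\{T_i\}$ the new children are always attached at the root $v_0$, while in $\{T'_i\}$ the attachment vertex $\xi'_i$ is chosen according to the weighted distribution~\eqref{Eq:treeNodeDistr}. Intuitively, attaching at the root keeps the tree shorter, so $T_n$ should be stochastically dominated in height by $T'_n$. Since on an $\ElementaryIdentities$-step both processes behave identically in terms of their node-selection rule, the whole comparison reduces to showing that, level-by-level, the weight of $T_n$ is concentrated no higher than that of $T'_n$.

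Concretely, I would proceed by induction on $n$ to construct partitions
\[
\PS_n = \bigsqcup_{i \in I_n} \PS_{n,i}, \qquad \PS'_n = \bigsqcup_{i \in I_n} \PS'_{n,i}
\]
with $\Prob{\PS_{n,i}} = \Prob{\PS'_{n,i}}$, on which the level-weight vectors $\vweight{n}$ and $\vweight[\prime]{n}$ are constant and satisfy
\[
\Tweight{n} = \Tweight[\prime]{n}, \qquad \weight{n}_{<k} \ge \weight[\prime]{n}_{<k} \text{ for all } k \in \MN.
\]
The base case $n=0$ is immediate since both trees are a single root with weight $1$. For the inductive step, fix a part and condition on the common event $\{\eta_n = d\}$ together with the Bernoulli choice of $H$-step vs.\ $\ElementaryIdentities$-step; in both regimes the total-weight identity is preserved because a step adds $d$ to both $\Tweight{n-1}$ and $\Tweight[\prime]{n-1}$.

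The key point is to couple the levels $K_n$ and $K'_n$ of the active vertices $\xi_n, \xi'_n$ so that $K_n \le K'_n$ almost surely inside each part. On an $\ElementaryIdentities$-step this is exactly the argument of Claim~\ref{claim:esp_partition}: the inductive hypothesis $\lp{n-1}_{<k} \ge \lp[\prime]{n-1}_{<k}$ lets us refine the partition so that $K_n \le K'_n$ pointwise. On an $H$-step it is automatic, since $K_n = 0$ by construction while $K'_n \ge 0$. Once $K_n \le K'_n$, the recurrence~\eqref{Eq:weightRecurrentFormula} propagates the cumulative-weight inequality to stage $n$ exactly as in the computation
\[
\weight{n}_{<k} = \weight{n-1}_{<k} + \Indicator{k = K_n+1} + d\cdot\Indicator{k > K_n+1} \ge \weight[\prime]{n}_{<k}.
\]

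The main subtlety is the $H$-step in $\{T'_i\}$: there $\xi'_n$ is drawn by the full weighted distribution, which differs from the $\ElementaryIdentities$-step only in the offspring law, but the coupling with $K_n = 0$ still gives $K_n \le K'_n$ trivially, so no extra work is needed. Once the inductive construction is in place, since the length of the weight vector is precisely $h(T_n)+1$, the inequality $\weight{n}_{<k} \ge \weight[\prime]{n}_{<k}$ for all $k$ forces $h(T_n) \le h(T'_n)$ on each $\PS_{n,i}$. Summing over the partition by the law of total probability,
\[
\Prob{h(T_n) \le x} = \sum_{i \in I_n} \CProb{h(T_n) \le x}{\PS_{n,i}}\Prob{\PS_{n,i}} \ge \sum_{i \in I_n} \CProb{h(T'_n) \le x}{\PS'_{n,i}}\Prob{\PS'_{n,i}} = \Prob{h(T'_n) \le x},
\]
which is the claimed inequality.
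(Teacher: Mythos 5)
Your proof is correct and takes essentially the same route as the paper's: an inductive partition of the two probability spaces (refined by the Bernoulli $H$-step versus $\ElementaryIdentities$-step choice and the common offspring size), reusing the coupling of Claim~\ref{claim:esp_partition} on the $\ElementaryIdentities$-branch, and observing that $K_n = 0 \le K'_n$ makes the $H$-branch coupling trivial. The conclusion via the law of total probability matches the paper exactly; I see no gaps.
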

\begin{proof}
We use the following claim.
\begin{claim}
Let $\PS_n$ and $\PS'_n$ be the probability spaces for $T_n$ and $T'_n$. We can partition them over the same set of indices $I_n$:
\[
	\PS_n = \sqcup_{i \in I_n} \PS_{n, i}, \ \PS'_n = \sqcup_{i \in I_n} \PS'_{n, i}
\]
so that for $i \in I_n$ it holds that
$\Prob{\PS_{n, i}} = \Prob{\PS'_{n, i}}$,
and in $\PS_{n, i}$ and $\PS'_{n, i}$:
\begin{gather}
	\text{the vectors $\vweight{n}$ and $\vweight[\prime]{n}$ are constant,}\\
	\label{Eq:total_weight_equality_msp}
	\Tweight{n} = \Tweight[\prime]{n},\\
	\label{Eq:prob_inequality_msp}
	\lp{n}_{<k} \ge \lp[\prime]{n}_{<k} \text{ for any $k \in \MN$}.
\end{gather}
\end{claim}
\begin{proof}
Induction on $n$. For $n = 0$ we have:
\[
	\vweight{0} = \vweight[\prime]{0} = \tuple{1}
\]
for which the conditions hold. Suppose that it holds for $\PS_{n-1}$ and $\PS'_{n-1}$.
We show how for each $i \in I_{n-1}$ we can partition $S = \PS_{n - 1, i}$ and $S' = \PS'_{n - 1, i}$ in the way satisfying the claim conditions.
After that the union of these partitions for each $i \in I_{n-1}$ gives us $I_n$ and required partitions of $\PS_n$ and $\PS'_n$.

Let $\xi_n$ be the active vertex of $T_{n-1}$ on iteration $n$.
Define the random variable $K_n$ to be the level of $\xi_n$ in $T_{n-1}$, that is:
\[
	K_n = \min_{v \in V_0} d(\xi_n, v), 	
\]
where $d$ is the graph distance in $T_{n-1}$. In the same way we define $\xi'_n$, $K'_n$ for $T'_{n-1}$.

First, partition $S = S_{\ElementaryIdentities} \sqcup S_H$ and $S = S'_{\ElementaryIdentities} \sqcup S'_H$ corresponding to the branches where we pick $\eta_i, \eta'_i$ from $\LDRandom{\ElementaryIdentities}$ or from $\LDRandom{H}$.
By the definition of $T_n$ and $T'_n$:
\[
	\CProb{S_H}{S} = \CProb{S'_H}{S'} = q, \quad  \CProb{S_{\ElementaryIdentities}}{S} = \CProb{S'_{\ElementaryIdentities}}{S'} = 1 - q.	
\]
By the induction hypothesis it holds that $\Prob{S} = \Prob{S'}$ and, hence, $\Prob{S_H} = \Prob{S'_H}$ and $\Prob{S_{\ElementaryIdentities}} = \Prob{S'_{\ElementaryIdentities}}$.

The proofs for partitions of $S_{\ElementaryIdentities}$, $S'_{\ElementaryIdentities}$ and $S_H$, $S'_H$ are similar to the proof for $S$, $S'$ in Claim~\ref{claim:esp_partition}.
\end{proof}

Now it follows from the claim and the law of total probability that:
\begin{align*}
	\Prob{h(T_n) \le x} = &\sum_{i \in I_n} \CProb{h(T_n) \le x}{\PS_{n, i}} \Prob{\PS_{n, i}} \\
	\ge&\sum_{i \in I_n} \CProb{h(T'_n) \le x}{\PS'_{n, i}} \Prob{\PS'_{n, i}}  = \Prob{h(T'_n) \le x},
\end{align*}
where the inequality in the middle follows from~\eqref{Eq:prob_inequality_msp} because the length of the vector defines the tree height.
\end{proof}

\begin{theorem}\label{Thm:mspDepthBound}
For any finite set $H \subset \FreeGroup{X}$ and $q \in (0, 1)$ the system of probability measures $\{\rho'_{n, q, H}\}$ on $\FreeGroup{X}$ defined by Algorithm~\ref{A:msp_generator} satisfies:
\[
	\rho'_{n, q, H} \Set{w' \in \FreeGroup{X}}{\delta_H(w') \le C' \ln n } \to 1 \text{ as } n\to \infty.
\]
\end{theorem}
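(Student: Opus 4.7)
The plan is to chain together the three ingredients already established for Algorithm~\ref{A:msp_generator}: the deterministic bound of Lemma~\ref{lemma:mspHDepthTreeBound}, the stochastic comparison of Lemma~\ref{Lemma:mspTreeComparison}, and the generic logarithmic height bound of Proposition~\ref{pr:msp_aux_h_bound}. Once these are stacked, the statement about the measure $\rho'_{n,q,H}$ falls out by monotonicity of probability on nested events.

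More precisely, I would first note that for any word $w'$ in the support of $\rho'_{n,q,H}$, the construction preceding Lemma~\ref{lemma:mspHDepthTreeBound} associates with the run of Algorithm~\ref{A:msp_generator} a generalized van Kampen diagram $D$ (with a single possible $H$-cell attached at $v_0$ and the rest $R$-cells) whose boundary is $w'$, and a random tree $T_n$ recording the nested attachment pattern. Lemma~\ref{lemma:mspHDepthTreeBound} then yields
\[
\delta_H(w') \le h(T_n)
\]
almost surely. Therefore, for any threshold $x\in\MN$,
\[
\rho'_{n,q,H}\Set{w' \in \FreeGroup{X}}{\delta_H(w') \le x} \ge \Prob{h(T_n) \le x}.
\]

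Next I would invoke Lemma~\ref{Lemma:mspTreeComparison} with $x = C'\ln n$ to replace $T_n$ by the auxiliary process $T'_n$, obtaining
\[
\Prob{h(T_n) \le C'\ln n} \ge \Prob{h(T'_n) \le C'\ln n}.
\]
Finally, Proposition~\ref{pr:msp_aux_h_bound} asserts that the right-hand side tends to $1$ as $n\to\infty$ for the constant $C' < e^2$ furnished by the CMJ analysis of $\{T'_n\}$. Combining the three inequalities finishes the proof.

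There is no real obstacle here since all the heavy lifting has been done upstream: the nontrivial content is in Lemma~\ref{Lemma:mspTreeComparison} (the coupling/partition argument that forces $T_n$ to be no taller than $T'_n$ in distribution) and in Proposition~\ref{pr:msp_aux_h_bound} (the CMJ height estimate). The only point deserving a line of care is to confirm that $C'$ is indeed independent of $n$ (but may depend on $\DRandom{\ElementaryIdentities}$, $\DRandom{H}$, and $q$), so that $C'\ln n$ is a valid deterministic threshold; this is exactly what Proposition~\ref{pr:msp_aux_h_bound} guarantees.
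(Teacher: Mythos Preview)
Your proof is correct and follows exactly the paper's own argument: invoke Lemma~\ref{lemma:mspHDepthTreeBound} to pass from $\delta_H(w')$ to $h(T_n)$, then apply Lemma~\ref{Lemma:mspTreeComparison} and Proposition~\ref{pr:msp_aux_h_bound} to conclude. One small descriptive slip: the diagram $D$ may have \emph{several} $H$-cells attached at $v_0$ (one for each step where the $q$-branch is taken), not just a single one, but this does not affect the logic since you only use the inequality from Lemma~\ref{lemma:mspHDepthTreeBound}.
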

\begin{proof}
By Lemma~\ref{lemma:mspHDepthTreeBound}:
\[
	\rho'_{n, q, H} \Set{w' \in \FreeGroup{X}}{\delta_H(w') \le x} \ge \Prob{h(T_n) \le x}.	
\]
The rest follows from Proposition~\ref{pr:msp_aux_h_bound} and Lemma~\ref{Lemma:mspTreeComparison}.
\end{proof}

\begin{thma}{\textbf{E.}}
\emph{
For any finite presentation $\gpr{X}{R}$ and a finite set $H \subset \FreeGroup{X}$ Algorithm $\CA_{MP}$ solves the randomized problem $\left(\MSP(H), \{\rho'_{n, q, H}\}_{n \ge 0}\right)$ defined by Algorithm~\ref{A:msp_generator} generically in polynomial time $\tilde{O}\rb{n^{1 + e^2 \ln L(R)}}$.
}
\end{thma}

\begin{proof}
By Theorem~\ref{Thm:mspSolverComplexity} the time complexity of $\CA_{MP}$ on a word $w'$ produced by Algorithm~\ref{A:msp_generator} is bounded by:
\[
	\tilde{O}\rb{\rb{\Fnorm{w'}+L(H)} L(R)^{\delta_H(w')}},
\]
We can bound $\Fnorm{w'}$ by $n \max_{x \in R \cup H} \FMnorm{x}$. By Theorem~\ref{Thm:mspDepthBound} and the fact that $C' < e^2$ the generic time complexity of $\CA_{MP}$ on $w'$ is bounded by:
\[
	\tilde{O}\rb{\rb{n \max_{x \in R \cup H} \FMnorm{x}+L(H)} L(R)^{e^2 \ln n}} = \tilde{O}\rb{n^{1 + e^2 \ln L(R)}}.
\]
\end{proof}


\appendix
\section{Crump-Mode-Jagers process}
In this section we show that the (discrete) branching process $\{T_i\}$ of Section~\ref{sub:wspAnalysis} (and similar processes in Sections~\ref{sub:espAnalysis}, \ref{sub:cspAnalysis} and \ref{sub:mspAnalysis}) can be analyzed as a particular CMJ process.
In Section~\ref{SubS:CMJDefinition} we define a general Crump-Mode-Jagers (CMJ) process,
which is a continuous time branching process and
discuss some general techniques for studying properties of random trees constructed by CMJ processes.
In Section~\ref{SubS:particularCMJ} we show that the random trees process in Section~\ref{sub:wspAnalysis} can be viewed as a particular CMJ-processes.
In Section~\ref{sub:particularCMJproperties} we show an asymptotic bound for heights of random trees.

\subsection{Crump-Mode-Jagers process}
\label{SubS:CMJDefinition}
CMJ processes were introduced as a model of population growth (\cite{Crump-Mode:1968}). They also found
applications in a study of random trees (see e.g., \cite{Devroye:1986, Devroye:1987, Mahmoud:1994, Pittel:1984, Pittel:1985, Pittel:1994,Biggins-Grey:1997} with an overview in~\cite{Devroye:1998}).
Formally, a CMJ process is a continuous time age-dependent branching process (see~\cite{Crump-Mode:1968} for the original definition)
defined by a pair $(l,z)$, where $l$ is a positive real-valued random variable called the {\em lifespan} of a vertex and $z(t)$  is a point process
(a positive, nondecreasing, right continuous, integer-valued random process) defining the number of offsprings for each vertex.
Note that $l$ and $z$ are not necessarily independent of each other.

A CMJ process associated with a pair $(l,z)$ is defined as a random tree $\{R(t)\}_{t \in \MR_{\ge 0}}$
growing incrementally over continuous time satisfying the following properties. For every $t \in \MR_{\ge 0}$:
    $$R(t) = (V(t), E(t))$$
is a rooted tree and
$$V(t) = V'(t) \sqcup V^{\dagger}(t),$$
where $V'(t)$ is the set of {\em active} (alive) vertices and $V^{\dagger}(t)$
the set of {\em nonactive} (dead) vertices.
Each vertex $v$ gets its own independent copy $(l_v, z_v)$ of $(l, z)$ with the same joint distribution. Initially:
$$R(0) = (\{v_0\}, \emptyset).$$
For $s < t$ the tree $R(s)$ is a (rooted) subtree of $R(t)$ such that:
$$V(s)\subseteq V(t) \mbox{ and } V^{\dagger}(s)\subseteq V^{\dagger}(t).$$
In particular, $v_0$ is the root of $R(t)$ for every $t \in \MR_{\ge 0}$.
Active vertices independently produce children according to the process $z$, and nonactive ones do not.
The \emph{birth time} of $v \in V(t)$ is:
	\[
		b(v) = \min_s \left\{v \in V(s)\right\}.
	\]
By definition $b_{v_0} = 0$. Each vertex is created active, in particular, $v_0\in V'(0)$.
The {\em lifespan} of $v \in V(t)$ is given by $l_v$ and it holds:
	\[
		\min_s \left\{v \in V^{\dagger}(s)\right\} = b(v) + l_v.
	\]
For $(l, z)$ define the following point process:
\[
	y(t) = \begin{cases}
		z(t), &\text{if } t < l;\\
		z(l), &\text{if } t \ge l.
	\end{cases}
\]
Denote the distribution of $y(t)$ by $Y(t)$. The {\em number of children} of each vertex $v \in V(t)$ is $c_v(t) = y_v(t - b(v))$ for $t \ge b(v)$, where $y_v$ is the copy of $y$ corresponding to $l_v$ and $z_v$. For $t < b(v)$ set $c_v(t) = 0$.
Note that new children can appear in batches of size more than $1$ depending on the point process $z(t)$.

Denote by $\xi(t)$ the number of active vertices $|V'(t)|$ at time $t$.
\begin{remark}
In population growth models the Crump-Mode-Jagers (CMJ) process is defined as $\xi(t)$ (the population size at time $t$).
\end{remark}

\begin{remark}
The model allows the lifespan $l$ to be $\infty$ in which case $V^\dagger(T)=\emptyset$ for every $t$.
\end{remark}

Some useful characteristics of the random process $\{R(t)\}_{t \in \MR_{\ge 0}}$ are listed below.
\begin{itemize}
	\item $t_n$ is the time at which the $n$th batch of vertices appears in $R(t)$.
	\item $R_n = R(t)$ for $t_n \le t < t_{n+1}$.
	\item $h_n$ is the height of $R_n$.
	\item $\xi_n = \xi (t_n)$ is the number of active vertices of $R_n$.
	\item $B_k = \min_{n \in \MN} \Set{t_n}{h_n = k}$ is the moment of time at which tree becomes of height $k$.
\end{itemize}
By definition $t_0 = B_0 = h_0 = 0$.

The {\em intensity measure} $M(t)$ of the point process $y(t)$ is defined by $M(t) = \ME{y(t)}$. Its Laplace transform is the function:
\begin{equation}\label{eq:M_Laplace}
	m(\theta) = \int e^{-\theta s} dM(s) = \ME{\int e^{-\theta s} dY(s)}.
\end{equation}
The event of an {\em ultimate survival} is defined by:
\[
	\CS = \{V'(t)\ne \emptyset \mid \ \forall t\ge 0 \}.
\]
To ensure a positive probability of $\CS$ we need the process to be \emph{supercritical}, that is, $\lim_{t \to \infty} M(t) > 1$.
Equivalently the process is supercritical if $m(0) > 1$.
Under some mild conditions, the \emph{Malthusian parameter} of the CMJ process can be defined by:
\begin{equation}
\label{Eq:alpha}
	\alpha = \inf\Set{\theta}{m(\theta) \le 1},
\end{equation}
and for a supercritical process it is true that $\alpha > 0$. If $m(0) = \infty$, then $m(\theta_0)<\infty$ for some $\theta_0 > 0$.
In fact:
\begin{equation}
\label{Eq:supercriticalProcessCondition}
	1 < m(\theta_0) < \infty.	
\end{equation}
Fix $\theta_0$ satisfying~\eqref{Eq:supercriticalProcessCondition} and define a function:
\[
	\mu(a) = \inf \Set{e^{\theta a} m(\theta)}{\theta \ge \theta_0}.	
\]
It is an increasing function of $a \ge 0$, and since $m(\theta) \to 0$ as $\theta \to \infty$ (by monotone convergence) it holds that $\mu(a) \to 0$ as $a \to 0$.
Hence $\mu(a) < 1$ for small values of $a > 0$ and we can define a constant:
\begin{equation}
\label{Eq:gamma}
	\gamma = \sup\Set{a}{\mu(a) < 1}.	
\end{equation}
These notations are necessary for the next theorem which gives us an asymptotic relation between $B_n$ and $n$.
\begin{theorem}[Kingman, \cite{Kingman:1975}]
\label{Thm:Kingman}
If $\theta_0>0$ satisfies~\eqref{Eq:supercriticalProcessCondition} then:
\[
	\lim_{n \to \infty} \frac{B_n}{n} = \gamma
\]
holds almost surely on $\CS$.
\qed
\end{theorem}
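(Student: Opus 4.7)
The plan is to follow Kingman's original strategy: first use subadditivity to obtain almost-sure convergence of $B_n/n$ to some nonrandom limit $\gamma'$ on $\CS$, then identify $\gamma' = \gamma$ via matching bounds derived from the Laplace transform $m(\theta)$.

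For existence of the limit, let $v^\ast$ be the first-born particle at generation $m$, so $b(v^\ast) = B_m$. Its descendants form an independent copy of the CMJ process shifted in time by $B_m$; let $B_n'$ denote the first birth time at generation $n$ in this shifted subtree, measured from $B_m$. Then $B_{m+n} \le B_m + B_n'$ with $B_n' \stackrel{d}{=} B_n$ and independent of the sigma-algebra generated by the first $m$ generations. This is exactly the setup for Kingman's subadditive ergodic theorem (applied to $\ME{B_n}$ for the deterministic limit and to $B_n$ itself for the almost-sure statement), which yields $B_n/n \to \gamma'$ a.s. on $\CS$ for some deterministic $\gamma' \in [0,\infty]$.

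For the lower bound $\gamma' \ge \gamma$, apply the many-to-one formula (or a direct union bound): for every $\theta \ge \theta_0$,
\[
    \ME{\#\{v : |v| = n,\, b(v) \le an\}} \le e^{\theta a n} m(\theta)^n = \bigl(e^{\theta a} m(\theta)\bigr)^n.
\]
Minimizing over $\theta \ge \theta_0$ gives $\mu(a)^n$, and Markov's inequality yields $\Prob{B_n \le an} \le \mu(a)^n$. Whenever $a < \gamma$ we have $\mu(a) < 1$, so these probabilities are summable and Borel--Cantelli gives $\liminf_n B_n/n \ge a$ almost surely; letting $a \uparrow \gamma$ produces $\gamma' \ge \gamma$.

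The matching bound $\gamma' \le \gamma$ is the delicate half. Fix $a > \gamma$ and choose a minimizer $\theta^\ast \ge \theta_0$ of $e^{\theta a} m(\theta)$, so that $\mu(a) \ge 1$. Assign each generation-$n$ particle $v$ the weight $e^{-\theta^\ast b(v)} m(\theta^\ast)^{-n}$; the sum $W_n$ of these weights over all generation-$n$ particles is a nonnegative martingale in the branching filtration (this is the additive martingale of the underlying branching random walk $b(v)$). Under a Kesten--Stigum-type $x\log x$ integrability condition on the offspring point process $z$, the martingale is uniformly integrable and converges a.s.\ to a limit $W_\infty$ that is strictly positive on $\CS$. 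Positivity of $W_\infty$ forces, for every large $n$ on $\CS$, the existence of at least one generation-$n$ particle with $b(v) \le an$, hence $B_n \le an$ eventually. Sending $a \downarrow \gamma$ closes the gap. The main obstacle is precisely this step: the martingale argument hinges on a tilted $x\log x$ integrability hypothesis which is not part of the quoted statement, and one must either deduce it from structural assumptions on $(l,z)$ or invoke Biggins' theorem for branching random walks to obtain the required a.s.\ positivity of $W_\infty$.
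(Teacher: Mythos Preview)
The paper does not prove this theorem: it is stated with a terminal \texttt{\textbackslash qed} and attributed to Kingman~\cite{Kingman:1975} as a black-box result from the literature. There is therefore no ``paper's own proof'' to compare your proposal against.

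Your sketch is a reasonable outline of the classical argument. A few remarks. The subadditivity step and the lower bound via Markov/Borel--Cantelli are essentially Kingman's original approach and are fine as stated. For the upper bound, however, Kingman's 1975 proof does not proceed via the additive martingale $W_n$ and a Kesten--Stigum condition; that machinery is due to Biggins and postdates Kingman. Kingman instead works directly with the subadditive structure and a truncation/comparison argument to identify the limit with $\gamma$, and his hypotheses are exactly~\eqref{Eq:supercriticalProcessCondition} (finiteness of $m(\theta_0)$ for some $\theta_0>0$) with no additional $x\log x$ moment needed. So your caveat in the last paragraph is well placed: the martingale route you describe would indeed require an extra integrability assumption not present in the statement, whereas Kingman's own argument avoids it. If you want a self-contained proof under only~\eqref{Eq:supercriticalProcessCondition}, you should follow Kingman's original truncation method rather than the Biggins martingale.
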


We say that the process $\xi(t)$ is \emph{non-lattice} if the intensity measure $M(t)$ is non-lattice,
that is, it is not supported by any lattice $\{0, c, 2c, \dots \}$ with $c > 0$.
The next theorem is a simplified version of Theorem~$2$ in~\cite{Biggins:1995}.
\begin{theorem}[Biggins, \cite{Biggins:1995}]
\label{Thm:Biggins}
Let $\xi(t)$ be a supercritical non-lattice CMJ process with Malthusian parameter $\alpha$. Then:
\[
	\lim_{t \to \infty} \frac{\ln \xi(t)}{t} = \alpha
\]
holds almost surely on $\CS$.
\qed
\end{theorem}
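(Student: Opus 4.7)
The plan is to split the conclusion into the two one-sided almost-sure inequalities
$\limsup_{t \to \infty} \frac{\ln \xi(t)}{t} \le \alpha$ (unconditionally) and $\liminf_{t \to \infty} \frac{\ln \xi(t)}{t} \ge \alpha$ on $\CS$, and to treat each with a different tool. The organizing fact behind both is that the Malthusian relation $m(\alpha) = 1$ turns $\tilde M(ds) := e^{-\alpha s} dM(s)$ into a probability measure on $[0,\infty)$; combined with the hypothesis that $M$ is non-lattice, $\tilde M$ is a non-lattice probability distribution, which is exactly what the Key Renewal Theorem requires.

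For the upper bound I would work in expectation first. Writing $\xi(t) = \sum_v \mathbbm{1}\bigl(b(v) \le t < b(v) + l_v\bigr)$ and conditioning on the birth events of the root's children, one obtains the renewal equation
\[
	Z(t) := \ME{\xi(t)} = \Prob{l > t} + \int_0^t Z(t-s) \, dM(s).
\]
Multiplying through by $e^{-\alpha t}$ converts this into a standard renewal equation driven by $\tilde M$, whose forcing term $e^{-\alpha t}\Prob{l>t}$ is directly Riemann integrable (by monotonicity of the survival function). The Key Renewal Theorem then yields $\ME{\xi(t)} \sim c\, e^{\alpha t}$ for a finite positive constant $c$. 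A Markov estimate $\Prob{\xi(n) > e^{(\alpha+\varepsilon)n}} = O(e^{-\varepsilon n})$ at integer times, Borel-Cantelli, and domination of $\xi(t)$ on $[n,n{+}1]$ by the total number of births by time $n{+}1$ (which has the same $e^{\alpha t}$ expectation growth via the renewal equation applied to the characteristic $\phi \equiv 1$ up to $l$), deliver $\limsup \ln\xi(t)/t \le \alpha + \varepsilon$ a.s.\ for every $\varepsilon > 0$.

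For the lower bound on $\CS$ I would invoke the Malthusian martingale attached to the embedded generation tree. Setting $W_n = \sum_{|v|=n} e^{-\alpha \tau_v}$, where $\tau_v$ is the sum of lifespans along the ancestral line of $v$, the branching property together with $m(\alpha) = 1$ gives $\ME{W_{n+1} \mid \mathcal{F}_n} = W_n$, so $W_n$ is a nonnegative martingale and has an a.s.\ limit $W_\infty$. Nerman's extension of the Kesten-Stigum theorem, under an $L\log L$ condition on the offspring point process, yields $W_n \to W_\infty$ in $L^1$ and the non-degeneracy $\{W_\infty > 0\} = \CS$ almost surely. The characteristic-counting framework of Nerman then upgrades the in-expectation renewal asymptotics to the a.s.\ statement $e^{-\alpha t}\xi(t) \to c\, W_\infty$, so on $\CS$ one has $\xi(t) = e^{\alpha t}(c\, W_\infty + o(1))$ with $W_\infty > 0$, and $\liminf \ln\xi(t)/t \ge \alpha$ a.s.\ on $\CS$.

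The main obstacle is the non-degeneracy $\{W_\infty > 0\} = \CS$ a.s. Without it the martingale could collapse to $0$ on survival and the lower bound would fail. This is exactly where the $L\log L$ hypothesis on the offspring distribution of $z$ is used; for the concrete CMJ processes of Sections \ref{sub:wspAnalysis}--\ref{sub:mspAnalysis} the offspring point process has extremely light tails and the hypothesis is easily verified, but in the general statement here it must be carried as a standing assumption or deduced from finiteness of $m(\theta)$ slightly past $\alpha$. A secondary technical step is promoting the renewal-theorem in-expectation asymptotics to an a.s.\ statement uniformly in $t$ (not merely along integer subsequences), handled by the Nerman characteristic apparatus combined with monotonicity of the cumulative birth count.
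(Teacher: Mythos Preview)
The paper does not prove this theorem: it is quoted from Biggins~\cite{Biggins:1995} and closed with a \qed, and the subsequent remark explains that it is the specialization of Theorem~2 there to the characteristic $\chi\equiv 1$. So there is no in-paper argument to compare your sketch against; the result is simply imported.

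On your sketch itself, two comments. First, a structural redundancy: for the lower bound you invoke Nerman's almost-sure convergence $e^{-\alpha t}\xi(t)\to c\,W_\infty$ together with $\{W_\infty>0\}=\CS$. That conclusion is strictly stronger than the logarithmic statement being proved, and once you have it both the $\limsup$ and the $\liminf$ follow at once on $\CS$; your separate Markov/Borel--Cantelli upper bound is then unnecessary. Second, and this is the real gap, Nerman's $L^1$ convergence and the non-degeneracy $\{W_\infty>0\}=\CS$ genuinely require an $x\log x$-type moment condition on the reproduction point process, which is \emph{not} among the hypotheses here (only supercritical, non-lattice, existence of $\alpha$). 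You flag this, but your proposed fix---deducing it from finiteness of $m(\theta)$ near $\alpha$---does not work: $m(\theta)$ is a condition on the \emph{intensity measure} $M$, whereas $x\log x$ is a condition on the \emph{random} variable $\int e^{-\alpha s}\,dz(s)$, and one does not control the other. The standard way to close the gap for the merely logarithmic conclusion is truncation: retain only the first $K$ offspring of each individual, obtain a dominated subprocess with Malthusian parameter $\alpha_K\uparrow\alpha$ for which all moment conditions hold trivially (bounded offspring), apply Nerman there to get $\liminf\ln\xi(t)/t\ge\alpha_K$ on survival of the truncation, argue that on $\CS$ some truncation survives almost surely, and let $K\to\infty$. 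Biggins's own route in~\cite{Biggins:1995} is different again: he embeds the CMJ process as a branching random walk with position equal to birth time and obtains the growth rate from large-deviation control of first-birth times (Kingman's Theorem~\ref{Thm:Kingman} is the companion result), which likewise avoids any $x\log x$ hypothesis for the logarithmic rate.
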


\begin{remark}
In the general version of the theorem there is a random characteristic $\chi$, which modifies the counting of active vertices, so that:
\[
	\xi^{\chi}(t) = \sum_{v \in V'(t)} \chi_v(t - b(v)),
\]
where each vertex is assigned its own copy of the characteristic. This characteristic is subject to certain conditions, but in our case $\chi(t) = 1$ for all $t \ge 0$ and these conditions hold. In addition, this theorem is a special case of a theorem for a spatial CMJ process (see~\cite{Biggins:1995} for details).
\end{remark}

Condition~\eqref{Eq:supercriticalProcessCondition} is sufficient for the process to be supercritical and so we assume that it holds when we apply Theorems \ref{Thm:Kingman} and \ref{Thm:Biggins}.
We also assume that $t_n \to \infty$ almost surely on $\CS$ as $n \to \infty$, which allows us to replace $t$ with $t_n$ in Theorem~\ref{Thm:Biggins} to obtain:
\begin{equation}
\label{Eq:timeSizeRatio}
	\lim_{n \to \infty} \frac{t_n}{\ln \xi_n} = \frac{1}{\alpha} \quad \text{a.s. on $\CS$}.
\end{equation}
Clearly $B_{h_n} \le t_n < B_{h_{n+1}}$ and hence (provided $t_n \to \infty$):
\[
	\frac{B_{h_n}}{h_n} \le \frac{t_n}{h_n} < \frac{B_{h_n + 1}}{h_n},
\]
which together with Theorem~\ref{Thm:Kingman} implies that:
\begin{equation}\label{eq:timeHeightRatio}
\lim_{n \to \infty} \frac{t_n}{h_n} = \gamma \quad \text{a.s. on $\CS$}.
\end{equation}
Equalities~\eqref{Eq:timeSizeRatio} and~\eqref{eq:timeHeightRatio} imply the following proposition,
which was proven for some particular instances of CMJ processes and also was proven in general in the works cited in the beginning of the section (see~\cite{Devroye:1998} for a general overview).

\begin{proposition}\label{Prop:mainCMJ}
Let $\xi(t)$ be a non-lattice CMJ process
for which there exists $\theta_0>0$ satisfying~\eqref{Eq:supercriticalProcessCondition}
and $t_n \to \infty$ as $n \to \infty$. Then:
\[
	\lim_{n \to \infty} \frac{h_n}{\ln \xi_n} = \frac{1}{\alpha \gamma} \quad \text{a.s. on $\CS$,}
\]
where $\alpha$ and $\gamma$ are defined in~\eqref{Eq:alpha} and~\eqref{Eq:gamma}.
\qed
\end{proposition}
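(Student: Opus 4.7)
The plan is that the proposition follows by combining the two preceding limit theorems on the survival event $\CS$, specifically equations~\eqref{Eq:timeSizeRatio} and~\eqref{eq:timeHeightRatio}, via the factorization
\[
	\frac{h_n}{\ln \xi_n} \;=\; \frac{h_n}{t_n} \cdot \frac{t_n}{\ln \xi_n}.
\]
The right-hand side is a product of two ratios whose almost-sure limits have already been set up by Kingman's Theorem~\ref{Thm:Kingman} and Biggins's Theorem~\ref{Thm:Biggins}, so the work is to verify each factor cleanly and then multiply.

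First I would verify equation~\eqref{Eq:timeSizeRatio}. Condition~\eqref{Eq:supercriticalProcessCondition} guarantees that the process is supercritical with finite Malthusian parameter $\alpha > 0$ defined in~\eqref{Eq:alpha}, and the non-lattice assumption on $\xi(t)$ places us in the setting of Theorem~\ref{Thm:Biggins}, yielding $\ln \xi(t)/t \to \alpha$ almost surely on $\CS$. Evaluating along the discrete birth times $t_n$, with $\xi_n = \xi(t_n)$ by definition, and invoking the standing assumption $t_n \to \infty$ a.s.\ on $\CS$, I would pass to reciprocals to obtain $t_n / \ln \xi_n \to 1/\alpha$ a.s.\ on $\CS$.

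Next I would verify equation~\eqref{eq:timeHeightRatio}. By the definition of $B_k$ as the first time the tree reaches height $k$, I have $B_{h_n} \le t_n < B_{h_n+1}$, hence
\[
	\frac{B_{h_n}}{h_n} \;\le\; \frac{t_n}{h_n} \;<\; \frac{B_{h_n+1}}{h_n+1} \cdot \frac{h_n+1}{h_n}.
\]
Provided $h_n \to \infty$ a.s.\ on $\CS$, Kingman's Theorem~\ref{Thm:Kingman} applied along both subsequences $h_n$ and $h_n+1$ sandwiches $t_n/h_n$ between two quantities converging to $\gamma$. Multiplying this with the limit from the previous paragraph gives $h_n / \ln \xi_n \to 1/(\alpha \gamma)$ a.s.\ on $\CS$, which is the desired conclusion.

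The main technical obstacle I anticipate is establishing $h_n \to \infty$ almost surely on $\CS$, which is needed to apply Kingman's limit along the subsequences $h_n$ and $h_n+1$. The idea is that on $\CS$ the population never dies out and, by Theorem~\ref{Thm:Biggins}, $\xi(t)$ grows like $e^{\alpha t}$, so the tree is eventually infinite; since each vertex produces at most finitely many offspring in any finite interval of its lifespan (because $m(\theta_0) < \infty$ forces $y(t)$ to be a.s.\ finite for all $t$), an infinite tree of such locally finite structure must have infinite height. Once this is made rigorous, the rest of the argument is a purely algebraic combination of the two cited asymptotics.
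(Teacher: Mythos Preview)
Your proposal is correct and follows essentially the same route as the paper: the paper derives~\eqref{Eq:timeSizeRatio} from Theorem~\ref{Thm:Biggins} and~\eqref{eq:timeHeightRatio} from Theorem~\ref{Thm:Kingman} via the same sandwich $B_{h_n}\le t_n < B_{h_n+1}$, and then simply states that the proposition follows from these two equalities. Your treatment is in fact slightly more careful, since you flag the need for $h_n\to\infty$ on $\CS$ to invoke Kingman along the subsequence, a point the paper leaves implicit under the blanket ``provided $t_n\to\infty$''.
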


\subsection{$\{T_i\}$ as a CMJ-process}
\label{SubS:particularCMJ}

Let $\CM$ be a distribution on $\MN$.
For $\CM$ we can define a step process $z(t)=z_\CM(t)$ as follows.
Initially, $z(0)=0$.
The sequence $0=s_0,s_1,s_2,\ldots \in\MR_+$ of steps of $z$ and their size is defined by:
\begin{itemize}
\item
$\tau_i = s_i-s_{i-1}$ is an independent random variable distributed as $\ExpD{i}$, an exponential random variable with parameter $i$;
\item
$\nu_i = z(s_i)-z(s_{i-1})$ is an independent random variable distributed as $\CM$.
\end{itemize}
Formally, $z$ can be defined as a weighted sum of indicators:
\begin{gather*}
z_\CM(t) = \sum_{k = 1}^\infty \nu_i \Indicator{t \ge \sum_{j = 1}^k \tau_j} \\
	=
	\begin{cases}
		0,  &\text{if $t < \tau_1$;}\\
		\nu_1 + \dots + \nu_i,  &\text{if $\tau_1 + \dots + \tau_i \le t < \tau_1 + \dots + \tau_i + \tau_{i + 1}$.}
	\end{cases}
\end{gather*}

Further, we define a CMJ-process $\{R(t)\}_{t\in\MR_{\ge0}}$ with an infinite lifespan $l$
and the offspring-size function $z_\CM(t)$.
Our goal is to show that the process $\{T_i\}$ defined in Section \ref{sub:wspAnalysis}
and the discrete process $\{R_i\}$ are the same if $\CM=\LDRandom{\ElementaryIdentities} - 1$.

For a vertex $v \in V(R_i)$ let $\{\nu_{i}(v), \tau_{i}(v)\}_{i \ge 0}$ be its copies of the random variables defining $z$. Define:
\begin{align*}
	\lambda^{(i)}(v) &= 1 + \max_k \left\{ b(v)+\sum_{j = 1}^k \tau_{j}(v) \le t_i \right\}, \\
	\omega^{(i)}(v) &=  \sum_{j = 1}^{\lambda^{(i)}(v)} \tau_{j}(v) - t_i.
\end{align*}
The maximum in the expression for $\lambda^{(i)}(v)$ is the number of times $v$
produced children up to the moment $t_i$ and $\omega^{(i)}(v)$ is the time from $t_i$ to the next moment $v$ produces children.
It is easy to see that:
\begin{itemize}
\item
$\lambda^{(0)}(v_0) = 1$.
\item
$\lambda^{(i)}(v_0) = 1$ for each $v \in V(R_{i}) \setminus V(R_{i-1})$.
\item
For each $v \in V(R_{i-1})$:
\[
	\lambda^{(i)}(v) = \lambda^{(i-1)}(v) + \Indicator{\text{$v$ produced vertices at time $t_i$}}.	
\]
\end{itemize}
Because of the memoryless property of the exponential distribution, $\omega^{(i)}(v) \gets \ExpD{\lambda^{(i)}(v)}$, the same way as $\tau_{\lambda^{(i)}(v)}(v)$.
Since the values $\lambda^{(i)}(v)$ and $\weight{i}(v)$ have the same recurrence relations and initial conditions we get:
\[
	\lambda^{(i)}(v) = \weight{i}(v). 	
\]
The probability of each particular $v' \in V(R_i)$ to produce the next batch of children constituting $V(R_{i+1}) \setminus V(R_{i})$ is:
\[
	\Prob{\omega^{(i)}(v') = \min_{v \in V_i} \omega^{(i)}(v)} = \frac{\lambda^{(i)}(v')}{\sum_{v \in V(R_i)} \lambda^{(i)}(v)} = \frac{\weight{i}(v)}{\Tweight{i}},
\]
where the first equality follows from the properties of exponential random variables.

We summarize the properties of the trees $R_i$ in the following lemma.
\begin{lemma}\label{le:CMJtreeDistr}
The tree $R_0$ consists of a single vertex $v_0$ with $\weight{0}(v_0) = 1$.
For $i = 1, \dots, n$ the tree $R_{i}$ is constructed from $R_{i-1}$ by adding $\nu_i$ new children to a random vertex $\xi_i$, where $\nu_i \gets \mathcal{M}$ and for a vertex $v \in V_{i - 1}$ the probability $\Prob{\xi_i = v}$ is defined by~\eqref{Eq:treeNodeDistr} and for a vertex $u \in V_{i}$ the weight is defined by~\eqref{Eq:weightRecurrentFormula}.
\end{lemma}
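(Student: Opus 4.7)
The plan is to verify the three structural claims in the statement: the initial state of $R_0$, the distribution of the batch size $\nu_i$, and the selection rule for the active vertex $\xi_i$. Most of the ingredients have already been assembled in the discussion preceding the lemma, so the proof reduces to organizing those observations and invoking one induction.

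First I would handle $R_0$: by construction of the CMJ process we have $R(0)=(\{v_0\},\emptyset)$, and by definition $\lambda^{(0)}(v_0)=1$. Since weights are going to be identified with the values $\lambda^{(i)}$, setting $\weight{0}(v_0)=1$ matches the base case. Next, for the batch size, I would appeal directly to the construction of the step process $z_{\CM}$: the $i$th jump of $z_{\CM}$ has size $\nu_i\gets\CM$ independently of everything before, and because the lifespan $l$ is infinite every vertex remains active forever, so the number of children produced at the firing time $t_i$ is a fresh $\CM$-sample, as required.

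The core step is the selection rule for $\xi_i$. Conditionally on $R_{i-1}$, each vertex $v\in V(R_{i-1})$ has a residual waiting time $\omega^{(i-1)}(v)$ until its next firing. The excerpt observes that, by the memoryless property of the exponential distribution, $\omega^{(i-1)}(v)\gets\ExpD{\lambda^{(i-1)}(v)}$ independently across vertices. Since the minimum of independent exponentials is attained at $v$ with probability proportional to its rate (and is almost surely unique, so batches come from a single vertex), I obtain
\[
\Prob{\xi_i=v}=\frac{\lambda^{(i-1)}(v)}{\sum_{u\in V(R_{i-1})}\lambda^{(i-1)}(u)},
\]
which coincides with \eqref{Eq:treeNodeDistr} once the identification $\lambda^{(i)}=\weight{i}$ is established.

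That identification is the final piece and is proved by induction on $i$. Both sequences satisfy the same base case, and both obey the same update rule: a newly created vertex is assigned value $1$, whereas the firing (active) vertex has its value incremented by $1$, matching \eqref{Eq:weightRecurrentFormula}. I expect the main obstacle to be purely bookkeeping — ensuring that the memoryless property is applied cleanly after each batch so that the residual clocks of nonfiring vertices keep the correct exponential rates, while the firing vertex restarts with its new incremented rate $\lambda^{(i)}(\xi_i)=\lambda^{(i-1)}(\xi_i)+1$. Once this is in place, the three components combine to give exactly the evolution described in the lemma.
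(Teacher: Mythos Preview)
Your proposal is correct and follows essentially the same approach as the paper: the paper presents the lemma explicitly as a summary of the discussion immediately preceding it (there is no separate proof environment), and your argument organizes precisely those observations---the base case $\lambda^{(0)}(v_0)=1$, the memoryless residual clocks $\omega^{(i)}(v)\gets\ExpD{\lambda^{(i)}(v)}$, the minimum-of-exponentials computation, and the inductive identification $\lambda^{(i)}=\weight{i}$---into a clean proof.
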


\begin{corollary}
\label{Cor:particularCMJ}
If $\CM = \LDRandom{\ElementaryIdentities} - 1$, then the random processes $\{T_i\}_{i \ge 0}$ and $\{R_i\}_{i \ge 0}$ are the same.
\end{corollary}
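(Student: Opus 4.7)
The corollary asserts that two discrete random processes on rooted weighted trees have identical distributions. My plan is a straightforward induction on $i$, comparing the two descriptions term by term.

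First I would place Lemma~\ref{Lemma:TreeDistr} and Lemma~\ref{le:CMJtreeDistr} side by side. Both specify the same initial condition: a single root $v_0$ with weight $\weight{0}(v_0)=1$. Both describe the transition $T_{i-1}\to T_i$ (respectively $R_{i-1}\to R_i$) in three pieces: (a) draw a random count of new children, (b) choose an active vertex $\xi_i$ with probability proportional to its weight according to~\eqref{Eq:treeNodeDistr}, (c) attach the new children to $\xi_i$ and update weights according to~\eqref{Eq:weightRecurrentFormula}. Pieces (b) and (c) are literally identical in the two statements, and the random count in piece (a) is drawn independently of the tree history in each case.

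The only difference between the two descriptions is thus the distribution of the count: $\eta_i\gets\LDRandom{\ElementaryIdentities}-1$ for $\{T_i\}$, versus $\nu_i\gets\CM$ for $\{R_i\}$. Under the hypothesis $\CM=\LDRandom{\ElementaryIdentities}-1$ these two distributions coincide, so a one-line inductive step, using the strong Markov property (or simply the fact that both processes are Markov chains with the same transition kernel on the state space of weighted rooted trees), yields equality in distribution of $T_i$ and $R_i$ for every $i\ge 0$, and in fact equality in distribution of the whole processes.

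There is essentially no obstacle here; the only thing worth making explicit is that the passage from continuous time in the CMJ description to discrete steps in Lemma~\ref{le:CMJtreeDistr} has already been carried out in the paragraphs preceding that lemma (via the memoryless property of $\ExpDSign(\lambda)$, which shows that $\omega^{(i)}(v)\gets\ExpDSign(\lambda^{(i)}(v))$ and that the identity $\lambda^{(i)}(v)=\weight{i}(v)$ holds for all $i$ and $v$). Once that identification is in hand, the corollary is an immediate consequence of Lemma~\ref{le:CMJtreeDistr} and Lemma~\ref{Lemma:TreeDistr}, and I would present it as a short ``By inspection of Lemmas~\ref{Lemma:TreeDistr} and~\ref{le:CMJtreeDistr}'' argument followed by the one-sentence induction.
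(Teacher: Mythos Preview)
Your proposal is correct and follows exactly the same approach as the paper: the paper's proof is the single line ``It follows from Lemmas~\ref{Lemma:TreeDistr} and~\ref{le:CMJtreeDistr},'' which is precisely what you arrive at after your (entirely accurate) unpacking of why those two lemmas together give identical Markov transition kernels once $\CM=\LDRandom{\ElementaryIdentities}-1$.
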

\begin{proof}
It follows from Lemmas \ref{Lemma:TreeDistr} and \ref{le:CMJtreeDistr}.
\end{proof}

\subsection{Properties of $\{R_i\}$}
\label{sub:particularCMJproperties}

It is clear that the process $\{R_i\}$ is non-lattice and the event of the ultimate survival $\CS$ is the whole probability space.
Below we show that the rest of the assumptions of Proposition \ref{Prop:mainCMJ} hold for $\{R_i\}$ assuming that $\supp \CM$ is finite.

\begin{lemma}\label{Lemma:t_limit}
For the process $\{R_i\}$ defined in Section~\ref{SubS:particularCMJ} with finite $\supp \CM$:
\[
	t_n \underset{n \to \infty}{\longrightarrow} \infty \ \text{a.s.}
\]
\end{lemma}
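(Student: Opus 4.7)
My plan is to exploit the CMJ construction of $\{R_i\}$ from Section~\ref{SubS:particularCMJ}. There $t_n = \sum_{i=1}^n \tau_i$, where $\tau_i = t_i - t_{i-1}$ is the waiting time for the $i$th batch. The memoryless property of the exponentials $\omega^{(i)}(v)$ implies that, conditionally on the $\sigma$-algebra $\mathcal{F}_{i-1}$ generated by the process up to $t_{i-1}$, we have $\tau_i \sim \ExpD{\Tweight{i-1}}$, since $\tau_i$ is the minimum of $|V(R_{i-1})|$ independent exponential clocks whose rates sum to $\Tweight{i-1}$. I will reduce $t_n \to \infty$ a.s. to showing that $\ME{e^{-t_n}} \to 0$.

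First I would use the finite-support hypothesis to get a linear-in-$i$ upper bound on the total weight. Setting $M = \max \supp \CM$, the recursion~\eqref{Eq:weightRecurrentFormula} (combined with Lemma~\ref{le:CMJtreeDistr}) gives $\Tweight{i} = \Tweight{i-1} + \nu_i + 1 \le \Tweight{i-1} + M + 1$, so $\Tweight{i} \le 1 + (M+1) i$ for every $i \ge 0$.

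Next I would compute the Laplace transform of $t_n$ by iterated conditioning. Using the elementary identity $\ME{e^{-\tau_i} \mid \mathcal{F}_{i-1}} = \Tweight{i-1}/(\Tweight{i-1}+1)$ for $\tau_i \sim \ExpD{\Tweight{i-1}}$, the tower property yields
\[
	\ME{e^{-t_n}} = \ME{\prod_{i=1}^n \frac{\Tweight{i-1}}{\Tweight{i-1}+1}}.
\]
Applying $1 - x \le e^{-x}$ factor by factor and substituting the linear bound on $\Tweight{i-1}$ gives
\[
	\ME{e^{-t_n}} \le \exp\Bigl(-\sum_{i=1}^n \tfrac{1}{2 + (M+1)(i-1)}\Bigr) = O(n^{-c})
\]
for some constant $c > 0$, so $\ME{e^{-t_n}} \to 0$ as $n \to \infty$.

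Finally, since $\tau_i > 0$ the sequence $\{t_n\}$ is monotone non-decreasing and hence converges a.s. to some $t_\infty \in (0,\infty]$. By monotone convergence applied to $e^{-t_n} \downarrow e^{-t_\infty}$, we obtain $\ME{e^{-t_\infty}} = \lim_n \ME{e^{-t_n}} = 0$, which forces $e^{-t_\infty} = 0$ and hence $t_\infty = \infty$ a.s. The only nontrivial step is the Laplace-transform bound: it is precisely here that the finite-support hypothesis on $\CM$ is used, since without a linear bound on $\Tweight{i}$ the harmonic-type sum might converge and $t_\infty$ could conceivably be finite.
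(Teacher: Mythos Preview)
Your proof is correct and shares the paper's core structure: both identify that the interbirth time $\tau_i$ is (conditionally) $\ExpD{\Tweight{i-1}}$, invoke the finite-support hypothesis to bound $\Tweight{i}$ linearly in $i$, and then show the resulting harmonic-type sum forces divergence. The only difference is in the final step: the paper decouples by stochastic domination, replacing $t_n$ by a sum $S_n$ of \emph{independent} exponentials $e_i\sim\ExpD{1+iM}$ and then appeals to Chebyshev's inequality (together with monotonicity) to get $S_n\to\infty$ a.s.; you instead stay with the conditional structure and use the Laplace transform via the tower property, obtaining $\ME{e^{-t_n}}=O(n^{-c})$ directly and then concluding by monotone convergence. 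Your route avoids the extra stochastic-domination step and makes the rate of decay explicit, while the paper's version is slightly quicker to state once one is willing to cite Chebyshev; substantively the two arguments are the same.
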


\begin{proof}
Clearly, $t_n = t_0 + \sum_{i = 1}^{n} (t_{i} - t_{i-1})$.
Since $\omega^{(i-1)}(v)$ is distributed as $\ExpD{\weight{i-1}(v)}$,
the interbirth time $t_{i} - t_{i-1}$ is distributed as:
$$
\min_{v\in V_{i-1}} \ExpD{\weight{i-1}(v)} = \ExpD{\Tweight{i-1}}.
$$
It is clear that:
\[
	\Tweight{i} = \Tweight{i - 1} + \nu_{i} + 1 = \Tweight{0} + \sum_{k = 1}^{i} (\nu_k + 1) = 1 + i + \sum_{k = 1}^{i} \nu_k,
\]
which implies:
\[
	1 + 2 i \le \Tweight{i} \le 1 + i M, \text{ where } M = \max_{m \in \supp(\mathcal{M})} m.
\]
Hence $t_n$ is stochastically larger than $S_n = \sum_{i = 0}^{n-1} e_i$, where $e_i \gets \ExpD{1 + i M}$,
which clearly (by Chebyshev's inequality) satisfies the property $	 S_n \underset{n \longrightarrow \infty}{\rightarrow}\infty \quad \text{a.s.}$
\end{proof}

\begin{lemma} \label{Lemma:cmjInstanceParameters}
For the process $\{R_i\}$ defined in Section~\ref{SubS:particularCMJ} with finite $\supp \CM$:
\begin{gather*}
	m(\theta) = \frac{\AvgM}{\theta - 1} \ \text{for $\theta > 1$,} \\
	\alpha = \AvgM + 1, \, \mu(a) = a e^{a + 1} \AvgM,
\end{gather*}
where $\gamma$ is the unique root of $a e^{a + 1} = \frac{1}{\AvgM}$.
\end{lemma}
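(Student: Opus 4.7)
The plan is to directly compute the four quantities $m(\theta)$, $\alpha$, $\mu(a)$, and $\gamma$ from their defining formulas \eqref{eq:M_Laplace}, \eqref{Eq:alpha}, \eqref{Eq:gamma}, using the explicit structure of the offspring process $z_\CM$. Once $m(\theta)$ is in closed form the remaining three are straightforward calculus exercises, so the whole argument reduces to evaluating one infinite series.

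First I would unfold $m(\theta)$. Since the lifespan $l$ is infinite, $y(t) = z_\CM(t)$, which is a pure-jump process whose $i$th jump at time $s_i = \tau_1 + \cdots + \tau_i$ adds $\nu_i$, so
\[
	\int e^{-\theta s}\,dz_\CM(s) = \sum_{i\ge 1} \nu_i\, e^{-\theta s_i}.
\]
Taking expectations, using the independence of $\nu_i$ from the $\tau_j$'s, the independence of the $\tau_j$'s from each other, and the Laplace transform $\ME{e^{-\theta \tau_j}} = j/(j+\theta)$ of $\ExpD{j}$, I obtain
\[
	m(\theta) \;=\; \AvgM \sum_{i\ge 1} \prod_{j=1}^{i} \frac{j}{j+\theta}.
\]
The finiteness of $\supp \CM$ (which gives $\nu_i \le M$ a.s.\ for some $M$) justifies interchanging expectation and the infinite sum by monotone convergence for $\theta > 0$.

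Next comes the main technical step: evaluating the series. Setting $c_i = \prod_{j=1}^i \frac{j}{j+\theta - 1}$, a direct manipulation gives the telescoping identity
\[
	\prod_{j=1}^{i} \frac{j}{j+\theta} \;=\; \frac{\theta}{\theta - 1}\bigl(c_i - c_{i+1}\bigr),
\]
valid for $\theta > 1$. Summing over $i\ge 1$ collapses the right-hand side to $\frac{\theta}{\theta - 1}\cdot c_1 = \frac{1}{\theta - 1}$, yielding $m(\theta) = \AvgM/(\theta - 1)$ for $\theta > 1$. I expect this series evaluation to be the only non-routine calculation in the proof; a useful sanity check is $\theta = 2$, where the product becomes $2/((i+1)(i+2))$ and the sum telescopes to $1$.

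Finally, I would derive the remaining quantities. The equation $m(\alpha) = 1$ reads $\AvgM/(\alpha - 1) = 1$, giving $\alpha = \AvgM + 1$; strict monotonicity of $m$ on $(1,\infty)$ ensures this is the infimum defining $\alpha$. For $\mu(a)$, I minimize $f(\theta) = e^{\theta a} \AvgM/(\theta - 1)$ over $\theta \ge \theta_0$: the logarithmic derivative $a - 1/(\theta - 1)$ vanishes uniquely at $\theta^\ast = 1 + 1/a$, and substitution gives $\mu(a) = a e^{a+1} \AvgM$ provided $a$ is small enough that $\theta^\ast \ge \theta_0$, which is all that matters since $\gamma$ is defined by the behavior as $a\to 0^+$. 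Since $a \mapsto a e^{a+1} \AvgM$ is continuous and strictly increasing from $0$ to $\infty$, the supremum in the definition of $\gamma$ coincides with the unique positive root of $a e^{a+1} = 1/\AvgM$, completing the lemma.
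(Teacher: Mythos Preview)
Your proposal is correct and follows the same route as the paper: reduce $m(\theta)$ to $\AvgM\sum_{i\ge 1}\prod_{j=1}^i \frac{j}{j+\theta}$ via independence and the Laplace transform of $\ExpD{j}$, then read off $\alpha$, $\mu(a)$, and $\gamma$ by elementary calculus. The only difference is that the paper cites Pittel and Biggins--Grey for the identity $\sum_{i\ge 1}\prod_{j=1}^i \frac{j}{j+\theta}=\frac{1}{\theta-1}$, whereas you supply a self-contained telescoping proof; your argument is valid (note $c_i\to 0$ for $\theta>1$ since $c_i=\frac{i!\,\Gamma(\theta)}{\Gamma(\theta+i)}\sim \Gamma(\theta)\,i^{1-\theta}$), and it is a modest improvement in that it removes an external reference.
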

\begin{proof}
Recall that $m(\theta)$ is the Laplace transform of $M(t) = \ME{y(t)}$:
\[
	m(\theta) = \ME{\int_0^\infty e^{-\theta s} dY(ds)}.
\]
Note that for an indicator function $I(t) = \Indicator{t \ge c}$, where $c$ is a constant, it holds that $\int_0^\infty e^{-\theta s} dI(s) = e^{-\theta c}$, and it holds that:
\begin{equation*}
	\ME{\nu_i} = \AvgM, \
	\ME{e^{-\theta \tau_j}} = \int_{0}^{\infty} e^{-\theta x} j e^{-jx} dx = \frac{j}{j + \theta}.
\end{equation*}
We use these facts in the following derivation:
\begin{align*}
	m(\theta)
	&=
	\ME{\sum_{i = 1}^\infty \nu_i e^{-\theta \left(\sum_{j = 1}^i \tau_j\right)}}
	=
	\sum_{i = 1}^{\infty} \ME{\nu_i \prod_{j = 1}^{i} e^{-\theta \tau_j}}\\
	&\overset{\text{i.r.v.}}{=\joinrel=}
	\sum_{i = 1}^{\infty} \ME{\nu_i} \prod_{j = 1}^{i} \ME{e^{-\theta \tau_j}}
	=
	\AvgM \sum_{i = 1}^{\infty} \prod_{j = 1}^{i} \frac{j}{j + \theta}.	
\end{align*}
In~\cite{Pittel:1994} and, more directly, in~\cite{Biggins-Grey:1997} (on page $341$ for the linear recursive tree with $b = 1$) it is shown that $\sum_{i = 1}^{\infty} \prod_{j = 1}^{i} \frac{j}{j + \theta} = \frac{1}{\theta - 1}$ for $\theta > 1$, hence:
\begin{gather*}
	m(\theta) = \frac{\AvgM}{\theta - 1} \ \text{for $\theta > 1$}, \\
	\alpha = \inf \Set{\theta}{\frac{\AvgM}{\theta - 1} < 1} = \AvgM + 1, \\
	\mu(a) = \inf \Set{e^{\theta a}\frac{\AvgM}{\theta - 1}}{\theta > 1} = \AvgM a e^{a + 1}.
\end{gather*}
The function $\mu(a)$ is positive and increasing for positive values of $a$. Therefore, $\gamma = \sup\Set{a}{\mu(a) < 1}$ is the unique root of $a e^{a + 1} = \frac{1}{\AvgM}$.
\end{proof}

\begin{theorem}\label{Thm:cmjTreeHeightLimite}
For the process $\{R_i\}$ defined in Section~\ref{SubS:particularCMJ} with finite $\supp \CM$:
\begin{equation}
\label{Eq:cmjTreeHeightLimit}
	\frac{h(R_n)}{\ln n} \to \frac{1}{\alpha \gamma} \quad \text{a.s. as $n \to \infty$},
\end{equation}
where $\alpha$ and $\gamma$ are defined in Lemma~\ref{Lemma:cmjInstanceParameters}.
Moreover, $\frac{1}{\alpha \gamma} < e^2$.
\end{theorem}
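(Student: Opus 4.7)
The main limit in~\eqref{Eq:cmjTreeHeightLimit} should be an immediate consequence of Proposition~\ref{Prop:mainCMJ} applied to $\{R_i\}$, provided we verify the hypotheses and pass from $\ln \xi_n$ to $\ln n$. The plan is first to check the four prerequisites for Proposition~\ref{Prop:mainCMJ}. Non-latticeness is immediate: the interbirth times are exponential, so the intensity measure $M$ is absolutely continuous. The event of ultimate survival $\CS$ is the whole probability space, since the lifespan is $l = \infty$. A witness $\theta_0$ for~\eqref{Eq:supercriticalProcessCondition} is supplied by Lemma~\ref{Lemma:cmjInstanceParameters}: with $m(\theta) = \AvgM/(\theta - 1)$ on $\theta > 1$, any $\theta_0 \in (1, 1 + \AvgM)$ satisfies $1 < m(\theta_0) < \infty$. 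Finally, $t_n \to \infty$ a.s.\ is exactly Lemma~\ref{Lemma:t_limit}.

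Next I would convert the ``$\ln \xi_n$'' in Proposition~\ref{Prop:mainCMJ} to ``$\ln n$''. By construction of $\{R_i\}$ with $l = \infty$, every vertex is active, so
\[
\xi_n = |V(R_n)| = 1 + \sum_{k=1}^{n} \nu_k,
\]
and the $\nu_k$ are i.i.d.\ copies of $\CM$. Since $\supp \CM$ is finite, $\AvgM < \infty$, and the SLLN yields $\xi_n/n \to 1 + \AvgM$ a.s. (assuming the non-degenerate case $\AvgM > 0$, which is the only setting in which the process grows). Taking logarithms, $\ln \xi_n - \ln n \to \ln(1 + \AvgM)$ a.s., so $\ln \xi_n / \ln n \to 1$. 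Combining this with Proposition~\ref{Prop:mainCMJ} and Lemma~\ref{Lemma:cmjInstanceParameters} gives the almost sure limit~\eqref{Eq:cmjTreeHeightLimit}.

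For the bound $1/(\alpha \gamma) < e^2$, my plan is to eliminate $\AvgM$ using the implicit definition of $\gamma$. From Lemma~\ref{Lemma:cmjInstanceParameters}, $\gamma e^{\gamma + 1} = 1/\AvgM$, equivalently $\AvgM = e^{-(\gamma+1)}/\gamma$, and $\alpha = \AvgM + 1$. Substituting,
\[
\alpha \gamma \;=\; (\AvgM + 1)\gamma \;=\; \gamma + \frac{1}{e^{\gamma + 1}}.
\]
Write $f(\gamma) = \gamma + e^{-(\gamma+1)}$. Its derivative $f'(\gamma) = 1 - e^{-(\gamma+1)}$ is strictly positive for $\gamma > 0$, so $f$ is strictly increasing on $(0,\infty)$ with $f(0^+) = 1/e$. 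Hence $\alpha \gamma > 1/e$, and therefore $1/(\alpha\gamma) < e < e^2$.

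The genuinely non-trivial ingredient is of course Proposition~\ref{Prop:mainCMJ} itself (which packages Kingman's and Biggins' theorems), but since it is already stated in the paper, the remaining obstacle is purely bookkeeping: making sure the passage from $\ln \xi_n$ to $\ln n$ is justified (this is where the finite-support assumption on $\CM$ is used to guarantee integrability of $\nu_k$) and that the formulas for $\alpha$ and $\gamma$ from Lemma~\ref{Lemma:cmjInstanceParameters} are valid on an event of full probability on $\CS$. Both are routine. The bound $1/(\alpha\gamma) < e^2$ comes essentially for free once one writes $\alpha\gamma$ as $\gamma + e^{-(\gamma+1)}$; in fact the argument gives the sharper inequality $1/(\alpha\gamma) < e$, and the slack up to $e^2$ is what allows the generic polynomial time bounds in Theorems~A--E.
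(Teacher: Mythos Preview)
Your argument is correct and follows the same skeleton as the paper's proof: verify the hypotheses of Proposition~\ref{Prop:mainCMJ} via Lemmas~\ref{Lemma:t_limit} and~\ref{Lemma:cmjInstanceParameters}, then replace $\ln \xi_n$ by $\ln n$, and finally bound $\alpha\gamma$ from below. Two execution details differ. For the passage $\ln\xi_n \sim \ln n$, the paper uses the deterministic sandwich $1+n \le \xi_n \le (n+1)M$ (implicitly using $\nu_k \ge 1$), while you invoke the SLLN for $\xi_n = 1 + \sum_{k\le n}\nu_k$; both are valid, the paper's is marginally simpler, yours is a touch more general. For the inequality, the paper argues $\gamma < 1 \Rightarrow \gamma > 1/(e^2\AvgM) \Rightarrow \alpha\gamma > (\AvgM+1)/(e^2\AvgM) > e^{-2}$, whereas your substitution $\alpha\gamma = \gamma + e^{-(\gamma+1)}$ and monotonicity give $\alpha\gamma > e^{-1}$, i.e.\ the sharper bound $1/(\alpha\gamma) < e$. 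Your route here is cleaner and strictly stronger; the paper only claims $<e^2$ because that is all that is needed downstream.
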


\begin{proof}
It follows from Lemmas~\ref{Lemma:t_limit} and~\ref{Lemma:cmjInstanceParameters}
that Proposition~\ref{Prop:mainCMJ} is applicable to $\{R_n\}$ and, hence:
\[
	\frac{h(R_n)}{\ln \xi_n} \to \frac{1}{\alpha \gamma} \quad \text{a.s. as $n \to \infty$}.
\]
Clearly, $1 + n \le \xi_n \le 1 + n M < (n+1) M$,
where $M = \max_{m \in \supp(\mathcal{M})} m$.
Therefore:
\[
	\ln (n + 1) \le \ln \xi_n < \ln (n + 1) + \ln M
\]
and
\[
	\frac{\ln \xi_n}{h(R_n)} -  \frac{\ln M}{h(R_n)} < \frac{\ln (n + 1)}{h(R_n)} \le \frac{\ln \xi_n}{h(R_n)}.
\]
Taking the limit as $n \to \infty$ proves \eqref{Eq:cmjTreeHeightLimit}.

Finally, by Lemma~\ref{Lemma:cmjInstanceParameters}, $\alpha = \AvgM + 1$ and
$\gamma$ is the unique root of $\gamma e^{\gamma + 1} = \frac{1}{\AvgM}$.
Clearly $\gamma < 1$ and $\gamma = \frac{1}{e^{\gamma + 1} \AvgM} > \frac{1}{e^2 \AvgM}$, which implies:
\[
	\alpha \gamma > \frac{\AvgM + 1}{e^2 \AvgM} > e^{-2}.
\]
\end{proof}

\providecommand{\bysame}{\leavevmode\hbox to3em{\hrulefill}\thinspace}
\providecommand{\MR}{\relax\ifhmode\unskip\space\fi MR }
\providecommand{\MRhref}[2]{%
  \href{http://www.ams.org/mathscinet-getitem?mr=#1}{#2}
}
\providecommand{\href}[2]{#2}


\begin{thebibliography}{10}

\bibitem{Abisha-Thomas-Subramanian:2003}
P.~{Abisha}, D.~{Thomas}, and K.~{Subramanian}, \emph{{Public key cryptosystems
  based on free partially commutative monoids and groups}}. Advances in
  Cryptology -- IndoCRYPT 2003, Lecture Notes Comp. Sc. 2904, pp.~218--227.
  Springer, 2003.

\bibitem{Biggins:1977}
J.~D. {Biggins}, \emph{{Chernoff's theorem in the branching random walk}}, J.
  Appl. Probab. {14} (1977), pp.~630--636.

\bibitem{Biggins:1995}
\bysame, \emph{{The growth and spread of the general branching random walk}},
  Ann. Appl. Probab. {5} (1995), pp.~1008--1024.

\bibitem{Biggins:1996}
\bysame, \emph{{How fast does a general branching random walk spread?}}.
  Classical and Modern Branching processes, The IMA Volumess in Mathematics and
  its Applications~84, pp.~19--40. Springer, 1996.

\bibitem{Biggins-Grey:1997}
J.~D. {Biggins} and D.~R. {Grey}, \emph{{A note on the growth of random
  trees}}, Stat. Probab. Lett. {32} (1997), pp.~339--342.

\bibitem{Birget-Magliveras-Sramka:2006}
J.-C. {Birget}, S.~{Magliveras}, and M.~{Sramka}, \emph{On public-key
  cryptosystems based on combinatorial group theory}, Tatra Mountains
  Mathematical Publications {33} (2006), pp.~137--148.

\bibitem{Boone:1958}
W.~{Boone}, \emph{{The word problem}}, Proc. Natl. Acad. Sci. {44} (1958),
  pp.~1061--1065.

\bibitem{Borisov:1969}
V.~{Borisov}, \emph{{Simple examples of groups with unsolvable word problem}},
  Math. Notes {6} (1969), pp.~768--775.

\bibitem{Rileybook2007}
N.~{Brady}, T.~{Riley}, and H.~{Short}, \emph{The geometry of the word problem
  for finitely generated groups}, Advanced Courses in Mathematics CRM
  Barcelona. Birkhauser, 2007.

\bibitem{Britton:1963}
J.~L. {Britton}, \emph{{The word problem}}, Ann. of Math. {77} (1963),
  pp.~16--32.

\bibitem{Collins:1986}
D.J. {Collins}, \emph{{A simple presentation of a group with unsolvable word
  problem}}, Illinois J. Math {30} (1986), pp.~230--234.

\bibitem{Crump-Mode:1968}
K.~S. {Crump} and C.~J. {Mode}, \emph{{A general age-dependent branching
  process}}, J. Math. Anal. Appl. {24} (1968), pp.~494--508.

\bibitem{Dehornoy_survey}
P.~{Dehornoy}, \emph{Braid-based cryptography}. Group theory, statistics, and
  cryptography, Contemporary Mathematics 360, pp.~5--33. American Mathematical
  Society, 2004.

\bibitem{Devroye:1986}
L.~{Devroye}, \emph{{A note on the height of binary search trees}}, JACM {33}
  (1986), pp.~489--498.

\bibitem{Devroye:1987}
\bysame, \emph{{Branching processes in the analysis of the height of trees}},
  Acta Inform {24} (1987), pp.~277--298.

\bibitem{Devroye:1998}
\bysame, \emph{{Branching processes and their applications in the analysis of
  tree structures and tree algorithms}}. Probabilistic Methods for Algorithmic
  Discrete Mathematics, Algorithms and Combinatorics~16, pp.~249--315, 1998.

\bibitem{Elder_Rechnitzer_Rensburg:2013}
M.~{Elder}, A.~{Rechnitzer}, and E.~J.~Janse {van Rensburg}, \emph{{Random
  sampling of trivials words in finitely presented groups}}, arXiv preprint
  arXiv:1312.5722 (2013).

\bibitem{Epstein:1991}
D.~B.~A. {Epstein}, D.~F. {Holt}, and S.~E. {Rees}, \emph{The use of
  Knuth-Bendix methods to solve the word problem in automatic groups}, Journal
  of Symbolic Computation {12} (1991), pp.~397 -- 414.

\bibitem{Gersten-Riley:2002}
S.~{Gersten} and T.~{Riley}, \emph{{Filling length in finitely presentable
  groups}}, Geometriae Dedicata {92} (2002), pp.~41--58.

\bibitem{GMMU:2007}
R.~{Gilman}, A.~G. {Myasnikov}, A.~D. {Miasnikov}, and A.~{Ushakov},
  \emph{{Report on generic case complexity}}, preprint, available at
  \url{http://arxiv.org/abs/0707.1364}.

\bibitem{Gilman:1979}
R.H. {Gilman}, \emph{Presentations of groups and monoids}, Journal of Algebra
  {57} (1979), pp.~544 -- 554.

\bibitem{Hammersley:1974}
J.~M. {Hammersley}, \emph{{Postulates for subadditive processes}}, Ann. Probab.
  {2} (1974), pp.~652--680.

\bibitem{Harris:1963}
T.~E. {Harris}, \emph{The theory of branching processes}. Springer-Verlag,
  1963.

\bibitem{Jagers-Nerman:1984}
P.~{Jagers} and O.~{Nerman}, \emph{{The growth and composition of branching
  populations}}, Adv. Appl. Probab. {16} (1984), pp.~221--259.

\bibitem{Kapovich_Miasnikov:2002}
I.~{Kapovich} and A.~G. {Miasnikov}, \emph{Stallings foldings and subgroups of
  free groups}, J. Algebra {248} (2002), pp.~608--668.

\bibitem{KMSS1}
I.~{Kapovich}, A.~G. {Miasnikov}, P.~{Schupp}, and V.~{Shpilrain},
  \emph{{Generic-case complexity, decision problems in group theory and random
  walks}}, J. Algebra {264} (2003), pp.~665--694.

\bibitem{Kingman:1975}
J.~F.~C. {Kingman}, \emph{{The first birth problem for an age-dependent
  branching process}}, Ann. Probab. {3} (1975), pp.~790--801.

\bibitem{Vehel-Perret:2006}
F.~{Levy-dit-Vehel} and L.~{Perret}, \emph{{On Wagner-Magyarik cryptosystem}},
  Lecture Notes in Computer Science, Special issue of the reviewed WCC'2005
  papers {3969} (2006), pp.~316--329.

\bibitem{Vehel-Perret:2010}
\bysame, \emph{{Security analysis of word problem-based cryptosystems}},
  Designs Codes and Cryptography {54} (2010), pp.~29--41.

\bibitem{Lyndon-Schupp:2001}
R.~{Lyndon} and P.~{Schupp}, \emph{Combinatorial Group Theory}, Classics in
  Mathematics. Springer, 2001.

\bibitem{MW}
M.~R. {Magyarik} and N.~R. {Wagner}, \emph{{A public key cryptosystem based on
  the word problem}}. Advances in Cryptology -- CRYPTO 1984, Lecture Notes
  Comp. Sc. 196, pp.~19--36. Springer, Berlin, 1985.

\bibitem{Mahmoud:1994}
H.~M. {Mahmoud}, \emph{{A strong law for the height of random binary
  pyramids}}, Ann. Appl. Probab. {4} (1994), pp.~923--932.

\bibitem{MSU_book}
A.~G. {Miasnikov}, V.~{Shpilrain}, and A.~{Ushakov}, \emph{Group-based
  cryptography}, Advanced Courses in Mathematics - CRM Barcelona. Birkh\"auser
  Basel, 2008.

\bibitem{MSU_book:2011}
\bysame, \emph{Non-commutative cryptography and complexity of group-theoretic
  problems}, Mathematical Surveys and Monographs. AMS, 2011.

\bibitem{MU1}
A.~G. {Miasnikov} and A.~{Ushakov}, \emph{{Random van Kampen diagrams and
  algorithmic problems in groups}}, Groups Complex. Cryptol. {3} (2011),
  pp.~121--185.

\bibitem{Myasnikov-Osin:2011}
A.~{Myasnikov} and D.~{Osin}, \emph{{Algorithmically finite groups}}, J. Pure
  Appl. Algebra {215} (2011), pp.~2789--2796.

\bibitem{Nerman:1981}
O.~{Nerman}, \emph{{On the convergence of supercritical general (C-M-J)
  branching processes}}, Z.~Wahrscheinlichkeitstheorie verw. Gebiete {57}
  (1981), pp.~365--395.

\bibitem{Novikov:1955}
P.~{Novikov}, \emph{On the algorithmic unsolvability of the word problem in
  group theory}, Proc. Steklov Inst. {44} (1955), pp.~1--143.

\bibitem{Ol_book}
A.~Yu. {Ol'shanskii}, \emph{Geometry of defining relations in groups}. Kluwer,
  1991.

\bibitem{Pittel:1984}
B.~{Pittel}, \emph{{On growing random binary trees}}, J. Math. Anal. Appl.
  {103} (1984), pp.~461--480.

\bibitem{Pittel:1985}
\bysame, \emph{{Asymptotical growth of a class of random trees}}, The Annals of
  Probability {13} (1985), pp.~414--427.

\bibitem{Pittel:1994}
\bysame, \emph{{Note on the heights of random recursive trees and random
  $m$-ary search trees}}, Random Struct. Algor. {5} (1994), pp.~337--347.

\bibitem{Schupp:1968}
P.~E. {Schupp}, \emph{{On Dehn's algorithm and the conjugacy problem}},
  Mathematische Annalen {178} (1968), pp.~119--130.

\bibitem{Shpilrain:2010}
V.~{Shpilrain}, \emph{Search and witness problems in group theory}, Groups
  Complex. Cryptol. {2} (2010), pp.~231--246.

\bibitem{Stallings:1983}
J.~{Stallings}, \emph{{Topology of finite graphs}}, Invent. Math. {71} (1983),
  pp.~551--565.

\bibitem{ToddCoxeter:1936}
J.~A {Todd} and H.~S.~M. {Coxeter}, \emph{A practical method for enumerating
  cosets of a finite abstract group}, Proceedings of the Edinburgh Mathematical
  Society {Series II 5} (1936), pp.~26--34.

\bibitem{Touikan:2006}
N.~{Touikan}, \emph{{A fast algorithm for Stallings' folding process}},
  Internat. J. Algebra Comput. {16} (2006), pp.~1031--1046.

\bibitem{Ushakov:thesis}
A.~{Ushakov}, \emph{Fundamental search problems in groups}, Ph.D. thesis,
  CUNY/Graduate Center, 2005.

\bibitem{Vasco-Steinwandt:2004}
M.~{Vasco} and R.~{Steinwandt}, \emph{A reaction attack on a public key
  cryptosystem based on the word problem}, Appl. Algebra Eng. Comm. {14}
  (2004), pp.~335--340.

\end{thebibliography}
\end{document}